\newtheorem{theorem}{Theorem}[section]
\newtheorem{lemma}[theorem]{Lemma}
\newtheorem{proposition}[theorem]{Proposition}
\newtheorem{corollary}[theorem]{Corollary}
\theoremstyle{definition}
\newtheorem*{ack}{Acknowledgements}
\theoremstyle{definition}
\newtheorem{definition}[theorem]{Definition}
\newtheorem{question}[theorem]{Question}
\newtheorem{example}[theorem]{Example}
\newtheorem{notation}[theorem]{Notation}
\theoremstyle{remark}
\newtheorem{remark}[theorem]{Remark}
\newtheorem{chunk}[theorem]{}
\numberwithin{equation}{section}
\newcommand{\syz}{\Omega}
\newcommand{\D}{\mathrm{D}}
\newcommand{\E}{\mathrm{E}}
\newcommand{\kk}{\Bbbk}
\newcommand{\xra}{\xrightarrow}
\newcommand{\id}{\operatorname{id}}
\newcommand{\Ext}{\operatorname{Ext}}
\newcommand{\Hom}{\operatorname{Hom}}
\newcommand{\RHom}{\operatorname{\textbf{R}Hom}}
\newcommand{\Ass}{\operatorname{Ass}}
\newcommand{\Coker}{\operatorname{Coker}}
\newcommand{\Tor}{\operatorname{Tor}}
\newcommand{\Tot}{\operatorname{Tot}}
\newcommand{\q}{\mathbf{q}}
\newcommand{\m}{\mathfrak{m}}
\newcommand{\x}{\mathbf{x}}
\newcommand{\depth}{\operatorname{depth}}
\newcommand{\grade}{\operatorname{grade}}
\newcommand{\rfd}{\operatorname{Rfd}}
\newcommand{\thick}{\operatorname{thick}}
\newcommand{\cidim}{\operatorname{CI-dim}}
\DeclareSymbolFont{largesymbolsstix}{LS2}{stixex}{m}{n}
\DeclareMathDelimiter{\lbrbrak}{\mathopen}{largesymbolsstix}{"EE}{largesymbolsstix}{"14}
\DeclareMathDelimiter{\rbrbrak}{\mathclose}{largesymbolsstix}{"EF}{largesymbolsstix}{"15}
\crefname{diagram}{diagram}{diagrams}
\crefname{diagram}{Diagram}{Diagrams}
\newcommand{\hsup}{\operatorname{hsup}}
\newcommand{\p}{\mathfrak{p}}
\newcommand{\Spec}{\operatorname{Spec}}
\newcommand{\pd}{\operatorname{pd}}
\newcommand{\hinf}{\operatorname{hinf}}
\newcommand{\qid}{\operatorname{qid}}
\newcommand{\cqpd}[2]{{#1}\!\operatorname{-qpd}_{#2}}
\newcommand{\cqid}[2]{{#1}\!\operatorname{-qid}_{#2}}
\newcommand{\B}[1]{\mathcal{B}_{#1}}
\newcommand{\A}[1]{\mathcal{A}_{#1}}
\newcommand{\qpd}[1]{\operatorname{qpd}_{#1}}
\renewcommand{\H}{\mathrm{H}}
\newcommand{\cpd}[2]{{#1}\!\operatorname{-pd}_{#2}}
\newcommand{\cid}[2]{{#1}\!\operatorname{-id}_{#2}}
\renewcommand{\b}{\bullet}
\newcommand{\cone}{\operatorname{cone}}
\newcommand{\Tr}{\operatorname{Tr}}
\newcommand{\lotimes}{\otimes^{\mathbf{L}}}
\newcommand{\Supp}{\operatorname{Supp}}
\keywords{Homological dimensions, Auslander classes, Bass classes, C-projectives, C-injectives, semidualizing modules, Auslander-Buchsbaum formula, Bass' formula, Ischebeck's formula, depth formula, dependency formula, quasi-projective dimension, quasi-injective dimension, quasi-projective resolution, quasi-injective resolution, vanishing of Tor, vanishing of Ext, Auslander-Reiten conjecture}
\subjclass[2020]{13D02, 13D05, 13D07, 13H10, 18G15, 18G20, 18G25.}
\author[Dey]{Souvik Dey}
\address[Souvik Dey]{Department of Mathematics \\ University of Arkansas \\ Fayetteville, AR 72701, U.S.A}
\email[]{souvikd@uark.edu}
\author[Ferraro]{Luigi Ferraro}
\address[Luigi Ferraro]{School of Mathematical and Statistical Sciences \\ University of Texas Rio Grande Valley \\ Edinburg, TX 78539, U.S.A}
\email[]{luigi.ferraro@utrgv.edu}
\author[Gheibi]{Mohsen Gheibi}
\address[Mohsen Gheibi]{Department of Mathematics \\ Florida A\&M University \\ Tallahassee, FL 32307, U.S.A}
\email[]{mohsen.gheibi@famu.edu}
\begin{document}
\title{Quasi-homological dimensions with respect to semidualizing modules}

\begin{abstract}
Gheibi, Jorgensen and Takahashi recently introduced the quasi-projective dimension of a module over commutative Noetherian rings, a homological invariant extending the classic projective dimension of a module, and Gheibi later developed the dual notion of quasi-injective dimension. Takahashi and White in 2010 introduced the projective and injective dimension of a module with respect to a semidualizing module, which likewise generalize their classic counterparts. In this paper we unify and extend these theories by defining and studying the quasi-projective and quasi-injective dimension of a module with respect to a semidualizing module. We establish several results generalizing classic formulae such as the Auslander-Buchsbaum formula, Bass' formula, Ischebeck's formula, Auslander's depth formula and Jorgensen's dependency formula. Furthermore, we prove a special case of the Auslander-Reiten conjecture and investigate rigidity properties of Ext and Tor.
\end{abstract}
\maketitle
%    Let $C$ be a semidualizing module. In this paper, we generalize quasi-projective and quasi-injective dimensions in the context of $C$-quasi homological dimensions. We recover various results known for finite $C$-projective, $C$-injective, quasi-projective, and quasi-injective dimensions such as the Auslander-Buchsbaum and Bass formulae, and Auslander's depth formula. Moreover, we prove Jorgensen's dependency formula for modules of finite quasi-projective dimension and generalize it for $C$-quasi-projective dimension.

\section{Introduction}

 The study of homological dimensions has long been central to understanding the structure and properties of modules over commutative Noetherian rings. Classic dimensions such as projective, injective, and flat dimensions provide fundamental invariants that capture subtle algebraic and homological behaviors of modules over such rings. For example, a ring is regular if and only if every module has finite projective dimension. To investigate modules over singular rings, several homological invariants that extend the concept of projective dimension have been introduced. Auslander and Bridger \cite{AusBrid} defined and explored the Gorenstein dimension. Subsequently, Avramov, Gasharov, and Peeva \cite{cidim} developed the complete intersection dimension.

 In recent decades, generalizations of these notions, especially through the lens of semidualizing modules have emerged as powerful tools to investigate module categories beyond the traditional setting. Various refinements and generalizations of these dimensions have been introduced, particularly in connection with Gorenstein homological dimensions; see for example \cite{holmj}. Recently, in another direction, quasi-homological dimensions were introduced and studied by Gheibi, Jorgensen and Takahashi in \cite{qpd} and by Gheibi in \cite{qid} as generalizations of the classic projective and injective dimensions respectively. These homological invariants provide information about modules and the base ring, as it is the case for the classic projective and injective dimension. The results proved in \cite{qpd} seem to suggest that modules of finite quasi-projective dimension behave homologically like modules over complete intersections, or, more generally, modules of finite complete intersection dimension.

Grothendieck \cite{groth} pioneered the concept of dualizing modules, which have become fundamental tools in the study of cohomology theories within algebraic geometry. Building on this foundation, Foxby \cite{foxbyC}, Vasconcelos \cite{vas}, and Golod \cite{golodC} independently introduced the notion of semidualizing modules. In \cite{TakWhite}, Takahashi and White defined the notion of $C$-projective dimension of a module, where $C$ is a semidualizing module. Takahashi and White show that there is a remarkable connection between modules with finite projective dimension and modules with finite $C$-projective dimension, see \cite[Theorem 2.11]{TakWhite}.

In this work, we take a further step and introduce quasi-homological dimensions with respect to a semidualizing module. These dimensions generalize the notions of quasi-projective and quasi-injective dimensions by replacing projective (resp. injective) modules with $C$-projective (resp. $C$-injective) modules where $C$ is a semidualizing module. The resulting theory unifies and extends known results on quasi homological dimensions and on homological dimensions with respect to semidualizing modules. In particular, we show that several homological formulae such as the Auslander–Buchsbaum formula (\Cref{thm:AB}), the Bass' formula (\Cref{Bassformula}) and Ischebeck's formula (\Cref{thm:supext} and \Cref{thm:supExt}) persist in the $C$-quasi setting under appropriate hypotheses. We point out that Ischebeck's formula for modules of finite quasi-projective and quasi-injective dimensions was recently settled in \cite{qpdIschebeck}.

\begin{comment}
\begin{theorem} Let $R$ be a local ring, and let $M$ be a finitely generated nonzero $R$-module. 
\begin{enumerate}
    \item If $\cqpd{C}{R}M<\infty$, then $\cqpd{C}{R}M=\depth R-\depth M$.
    \item If $\cqid{C}RM < \infty$ and $\Tor^R_{>0}(C,M)=0$, then $\cqid{C}RM = \depth R$.
\end{enumerate}
\end{theorem}
\end{comment}

Furthermore, we establish semidualizing versions of Auslander's depth formula (\Cref{thm:cpdDepth}) and Jorgensen’s dependency formula (\Cref{cor:DepForCpd}) for modules of finite $C$-projective dimension and extend them to the context of finite $C$-quasi-projective dimension (\Cref{thm:DepthformulaQPD} and \Cref{cor:DepFormQPD} respectively).

\begin{comment}
For nonzero finitely generated $R$-modules $M$ and $N$, assume $\Tor^R_{>>0}(M,N)=0$. According to \cite{Jorgensen}, $M$ and $N$ satisfy in {\em dependency formula} if 
$$\sup\{i\mid\Tor^R_i(M,N)\neq 0\}=\sup\{\depth R_{\p}-\depth_{R_\p}M_\p-\depth_{R_p}N_\p\mid\p\in \Spec R\}.$$
In \cite{Jorgensen}, it is shown that $M$ and $N$ satisfy in dependency formula provided one of them has finite complete intersection dimension. The following result extends the class of $R$-modules satisfying the dependency formula.

\begin{theorem}
Let $R$ be a local ring, and let $M$, $N$ be nonzero finitely generated $R$-modules such that $\Tor^R_{>>0}(M,N)=0$. Assume that $\cqpd{C}{R}M<\infty$ and $M\in\B{C}(R)$, where $\A{C}(R)$ and $\B{C}(R)$ are respectively the Auslander and Bass classes of $R$ with respect to $C$. Then $M$ and $N$ satisfy the dependency formula.    
\end{theorem}

We also prove Ischebeck's formula for modules of finite $C$-quasi homological dimensions that was settled recently in \cite{} for modules of finite quasi-projective and quasi-injective dimensions.

\begin{theorem} Let $R$ be a local ring, and let $M$, $N$ be finitely generated nonzero $R$-modules such that $\Ext^{>>0}_R(M,N)=0$. Assume either of the following conditions hold.
\begin{enumerate}
    \item One has $M\in \A{C}(R)$, $\Tor^R_{>0}(C,N)=0$ and $\cqid{C}{R}N<\infty$.
    \item One has $M\in \B{C}(R)$, $\Ext^{>0}_R(C,N)=0$,  $\cqpd{C}{R}M$ is finite.
\end{enumerate} Then  one has $$\sup\{ i| \Ext^i_R(M,N)\neq 0\} =  \depth R - \depth M.$$
\end{theorem}
\end{comment}

The paper is organized as follows. In Section 2, we review necessary background information and fix notation. Section 3 investigates properties of quasi-projective and $C$-projective dimensions, including their behavior under extensions and direct sums. Moreover, we prove Auslander's depth formula and Jorgensen's dependency formula in this setting. In Section 4 we define the $C$-quasi-projective and $C$-quasi-injective dimensions and prove transfer formulae analogous to the transfer formulae proved by Takahashi and White in \cite[Theorem 2.11]{TakWhite} in the non-quasi setting. In Section 5 we further study the properties of the $C$-quasi-projective dimension generalizing several of the properties that were investigated in \cite{qpd} in the non-semidualizing case. It is in this section that we prove the Auslander-Buchsbaum formula for modules of finite $C$-quasi-projective dimension. We also provide a result concerning the rigidity of Ext and Tor for modules of finite $C$-quasi-projective dimension, see \Cref{ExtTor}. Section 6 contains several applications regarding the $C$-quasi-projective dimension such as the aforementioned Auslander's depth formula, Ischebeck's formula for modules of finite $C$-quasi-projective dimension and Jorgensen's dependency formula. Additionally, we prove a special case of the Auslander-Reiten conjecture (\Cref{thm:ARconj}). We also provide a characterization of Gorenstein rings (\Cref{thm:KeriTotu}), generalizing (for modules) a result of Sather-Wagstaff and Totushek \cite[Theorem 3.2]{KeriTotu} which answered a question of Takahashi and White \cite[Question 5.4]{TakWhite}. Section 7 concerns several applications of the $C$-quasi-injective dimension, most of them generalizations of results proved in \cite{qid}, including Bass' formula and Ischebeck's formula for modules of finite $C$-quasi-injective dimension. We also provide a result concering the rigidity of Ext for modules of finite $C$-quasi-injective dimension, see \Cref{Vext}. Finally, we show that if a ring with a dualizing complex admits a finitely generated module of finite $C$-quasi-projective dimension and finite injective dimension, or finite $C$-quasi-injective dimension and finite projective dimension, then this forces the ring to be Cohen-Macaulay and $C$ to be dualizing (\Cref{0.8}), generalizing \cite[Corollary 6.21]{qpd} and recovering \cite[Corollary 4.3]{qid} for rings with a dualizing complex.

\begin{ack}
The authors thanks Majid R. Zargar for pointing out \Cref{cid}. Souvik Dey was partly supported by the Charles University Research Center program No.UNCE/24/SCI/022 and a grant GA \v{C}R 23-05148S from the Czech Science Foundation. Part of this work was done when the first author visited the second author at the University of Texas Rio Grande Valley, and he is thankful for their hospitality.  Luigi Ferraro was partly supported by the Simons Foundation grant MPS-TSM-00007849. All of the work done in this paper took place when the first author was a research scientist at the Department of Algebra of Charles University, Prague, and he is very grateful for the outstanding atmosphere fostered by the department. 
\end{ack}

\section{Background and notation}

In this section, we recall necessary definitions and preliminary results from the literature. Throughout, $R$ is a commutative Noetherian ring with unitary element.

\begin{chunk}[{\bf Complexes}]\label{notations}
Let $$X_\b= (\cdots\xrightarrow{\partial_{i+2}}X_{i+1}\xrightarrow{\partial_{i+1}}X_i \xrightarrow{\partial_{i}} X_{i-1} \xrightarrow{\partial_{i-1}} \cdots)$$ be a complex of $R$-modules. We define the {\em supremum}, {\em infimum}, {\em homological supremum} and {\em homological infimum} of $X$ by
$$
\begin{cases}
\sup X_\b=\sup\{i\in\mathbb{Z}\mid X_i\ne0\},\\
\inf X_\b=\inf\{i\in \mathbb{Z}\mid X_i\ne0\},
\end{cases}
\qquad
\begin{cases}
\hsup X_\b=\sup\{i\in\mathbb{Z}\mid\H_i(X_\b)\ne0\},\\
\hinf X_\b=\inf\{i\in\mathbb{Z}\mid\H_i(X_\b)\ne0\}.
\end{cases}
$$
We say that $X_\b$ is \emph{bounded}, if $\sup X_\b-\inf X_\b <\infty$. We say that $X_\b$ is \emph{bounded below} if $\inf X_\b>-\infty$ and $X_\b$ is {\em bounded above} if $\sup X_\b<\infty$. 
Note that if $X_\b$ satisfies $X_i=0$ for all 
$i\in\mathbb Z$, then $\sup X_\b =-\infty$, $\inf X_\b=\infty$.
For an integer $j$, the complex $\Sigma^jX_\b$ is defined by $(\Sigma^j X_\b)_i = X_{i-j}$ and $\partial^{\Sigma^jX_\b}_i=(-1)^j\partial^X_{i-j}$ for all $i$. 
\end{chunk}

\begin{chunk}[\bf Derived Category]
    The \textit{derived category} $\mathrm{D}(R)$ is
the category of $R$-complexes localized at the class of all quasi-isomorphisms. We use $\simeq$ to denote isomorphisms in $\mathrm{D}(R)$. The full subcategory of homologically bounded above, bounded below, and bounded complexes are denoted by $\mathrm{D}_{\sqsubset}(R)$, $\mathrm{D}_{\sqsupset}(R)$, and $\mathrm{D}_b(R)$ respectively. We denote $\mathrm{D}_{b}^f(R)$  the full subcategory of homologically bounded complexes with finitely generated homology modules.
\end{chunk} 

\begin{chunk}[\bf Thick Subcategories]
A \emph{thick subcategory} $\mathcal{T}$ of $\D(R)$ is a nonempty full subcategory such that:
\begin{enumerate}
\item $\mathcal{T}$ is closed under isomorphisms in $\D(R)$.
\item If $X\in\mathcal{T}$, then $\Sigma X\in\mathcal{T}$.
\item In any exact triangle, if two objects are in $\mathcal{T}$, so is the third.
\item Every direct summand of an object $\mathcal{T}$ is also in $\mathcal{T}$.
\end{enumerate}
If $M$ is an object in $\D(R)$, then the intersection of thick subcategories containing $M$ is also a thick subcategory. We refer to this intersection as the thick subcategory generated by $M$ and denote it by $\thick_{R}M$. We point out that $\thick_{R}R$ is the subcategory of perfect complexes.
\end{chunk}

\begin{chunk}[\bf Semidualizing modules]
    A finitely generated $R$-module $C$ is called a {\em semidualizing} $R$-module if 
    \begin{enumerate}
\item The natural homothety map $R\rightarrow\Hom_R(C,C)$ is an isomorphism.
\item $\Ext_R^i(C,C)=0$ for all $i>0$.
    \end{enumerate}

    Let $C$ be a semidualizing $R$-module. Then one has $\Supp_R(C)=\Supp(R)$ and $\depth_RC=\depth R$; see  \cite[Proposition 2.2.3 and Theorem 2.2.6]{KeriSD}.
\end{chunk}

\begin{chunk}[\bf Auslander and Bass classes] Let $C$ be a semidualizing $R$-module.

The {\em Auslander class} $\A{C}(R)$ is the class of $R$-modules $M$ satisfying in the following conditions.
\begin{enumerate}
    \item The natural map $M \longrightarrow \Hom_R(C,C\otimes_RM)$ is an isomorphism.
    \item One has $\Tor^R_{>0}(C,M)=0=\Ext^{>0}_R(C,C\otimes_RM)$.
\end{enumerate}

The {\em Bass class} $\B{C}(R)$ is the class of $R$-modules $M$ satisfying in the following conditions.
\begin{enumerate}
    \item The evaluation map $C\otimes_R\Hom_R(C,M) \longrightarrow M$ is an isomorphism.
    \item One has $\Ext_R^{>0}(C,M)=0=\Tor_{>0}^R(C,\Hom_R(C,M))$.
\end{enumerate}
\end{chunk}

\begin{chunk}[\bf $C$-projective and $C$-injective modules]
    Let $C$ be a semidualizing module and let $P$ be a projective $R$-module. The module $P\otimes_RC$ is called a $C$-{\em projective}  $R$-module. For an $R$-module $M$, a $C$-projective resolution of $M$ is an exact complex $$\dots \to C\otimes_RP_1 \to C\otimes_RP_0 \to M \to 0,$$ where $P_i$ are projective $R$-modules. We say $\cpd{C}{R}M<\infty$ if $M$ admits a bounded $C$-projective resolution and we say that $\cpd{C}{R}M=n$ if the smallest $C$-projective resolution of $M$ has length $n$.

    Let $I$ be an injective $R$-module. The module $\Hom_R(C,I)$ is called a $C$-{\em injective} $R$-module. For an $R$-module $M$, a $C$-injective resolution of $M$ is an exact complex 
    $$0\to M \to \Hom_R(C,I_0) \to \Hom_R(C,I_1) \to \dots,$$ where the $I_i$'s are injective $R$-modules.  We say $\cid{C}{R}M<\infty$ if $M$ admits a bounded $C$-injective resolution. and we say that $\cid{C}{R}M=n$ if the smallest $C$-injective resolution of $M$ has length $n$.

    We refer the reader to \cite{TakWhite} for details about $C$-projective and $C$-injective dimensions of modules and to \cite{totushek} for the complex case.
\end{chunk}

\section{Quasi-projective and $C$-projective dimension}
In this section, we collect new results about quasi-projective and $C$-projective dimensions to be generalized, or used, in later sections for the $C$-quasi-projective dimension, and the modules are not necessarily finitely generated. Unless otherwise specified, all modules will be over a commutative Noetherian ring $R$, not necessarily local.

\begin{notation}
Let $M$ be an $R$-module and let $n$ be a positive integer. We denote by $M^{\oplus n}$ the direct sum
\[
\underbrace{M\oplus\cdots\oplus M}_{n\;\mathrm{times}}.
\]
The module $M^{\oplus0}$ is the zero module.
\end{notation}

We recall the definitions of quasi-projective and quasi-injective dimension from \cite[Definition 3.1]{qpd} and \cite[Definition 2.2]{qid} respectively. Let $R$ be a ring and let $M$ be an $R$-module.

A {\em quasi-projective resolution} of $M$ over $R$ is a bounded below complex $P_\b$ of projective $R$-modules such that for all $i\ge\inf P_\b$ there exist non-negative integers 
$a_i$, not all zero, such that $\H_i(P_\b)\cong M^{\oplus a_i}$.
We define the {\em quasi-projective dimension} of $M$  to be
$$
\qpd{R}M=
\inf\{\sup P_\b-\hsup P_\b\mid\text{$P_\b$ is a bounded quasi-projective resolution of $M$}\},
$$ and $\qpd{R}M=-\infty$ if $M=0$.

A {\em quasi-injective resolution} of $M$ over $R$ is a bounded above complex $I_\b$ of injective $R$-modules such that for all $i\leq \sup I_\b$ there exist non-negative integers $b_i$, not all zero, such that $\H_i(I_\b)\cong M^{\oplus b_i}$. We define the {\em quasi-injective dimension} of  $M$ to be
$$
\qid_R M=
\inf\{\hinf I_\b-\inf I_\b\mid\text{$I$ is a bounded quasi-injective resolution of $M$}\},
$$ and $\qid_R M=-\infty$ if $M=0$.

One has $\qpd{R}M=\infty$ or $\qid_R M=\infty$ if and only if $M$ does not admit a bounded quasi-projective or quasi-injective resolution.

The next proposition will be used to show that over local rings the quasi projective dimension of a module remains unchanged if one adds a projective summand. One should also compare it to \cite[Proposition 3.3(4)]{qpd}.

\begin{proposition}\label{sesqpd} Let $J$ be a projective module. If there is an exact sequence $0\to J \xrightarrow{j} M \to N \to 0$,
then $\qpd{R} (N)\le \sup\{1, \qpd{R}(M)\}$.

\end{proposition}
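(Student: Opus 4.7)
My approach is to construct a quasi-projective resolution of $N$ as the mapping cone of a chain map from a complex of copies of $J$ into a quasi-projective resolution of $M$. First I would dispose of the degenerate cases: if $\qpd{R}M = \infty$ the bound is trivial, if $M = 0$ then $J = N = 0$, and if $N = 0$ then $\qpd{R}N = -\infty$. So I may assume that $M$ and $N$ are nonzero and that $\qpd{R}M$ is finite.

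Fix a bounded quasi-projective resolution $P_\b$ of $M$ realizing $\qpd{R}M$, with $t := \sup P_\b$, $s := \hsup P_\b$, $t - s = \qpd{R}M$, and $\H_i(P_\b) \cong M^{\oplus a_i}$ for non-negative integers $a_i$ (not all zero). For each $i$ with $a_i > 0$, the projectivity of $J^{\oplus a_i}$ together with the surjection $Z_i(P_\b) \twoheadrightarrow \H_i(P_\b) \cong M^{\oplus a_i}$ allows one to lift the inclusion $j^{\oplus a_i} \colon J^{\oplus a_i} \hookrightarrow M^{\oplus a_i}$ to a map $\alpha_i \colon J^{\oplus a_i} \to Z_i(P_\b) \subseteq P_i$.

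Next, form the bounded complex $J_\b$ with $J^{\oplus a_i}$ in degree $i$ and zero differential; the family $\{\alpha_i\}$ assembles into a chain map $\alpha \colon J_\b \to P_\b$ because each $\alpha_i$ takes values in $\ker \partial_i^P$. Let $Q_\b := \cone(\alpha)$, so that $Q_i = P_i \oplus J^{\oplus a_{i-1}}$ with differential $(y, z) \mapsto (\partial_i^P(y) + \alpha_{i-1}(z), 0)$. By construction, the induced map $\alpha_* \colon \H_i(J_\b) = J^{\oplus a_i} \to \H_i(P_\b) = M^{\oplus a_i}$ coincides with $j^{\oplus a_i}$, which is injective in every degree. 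Hence in the long exact homology sequence of the exact triangle $J_\b \to P_\b \to Q_\b \to \Sigma J_\b$ every connecting homomorphism vanishes, yielding $\H_i(Q_\b) \cong \Coker(j^{\oplus a_i}) \cong N^{\oplus a_i}$ for every $i$.

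Consequently $Q_\b$ is a bounded quasi-projective resolution of $N$ with $\hsup Q_\b = s$ and $\sup Q_\b = \max(t, s+1)$, so $\qpd{R}N \leq \max(t, s+1) - s = \sup\{1, \qpd{R}M\}$. The step requiring the most care is producing the lifts $\alpha_i$ so that they represent the fixed inclusion $j^{\oplus a_i}$ in each homology; once this is arranged, the injectivity of every $j^{\oplus a_i}$ collapses the cone's long exact sequence into the desired isomorphisms, and the length bound falls out of the single-degree shift implicit in $\cone(\alpha)$.
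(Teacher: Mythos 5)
Your proof is correct and follows essentially the same strategy as the paper's: lift the inclusion $j^{\oplus a_i}$ along the surjections $Z_i(P_\b)\twoheadrightarrow \H_i(P_\b)$ to build a chain map from the complex of copies of $J$ with zero differential into $P_\b$, pass to the mapping cone, and use injectivity of the $j^{\oplus a_i}$ to collapse the long exact sequence and identify $\H_i(\cone(\alpha))\cong N^{\oplus a_i}$. The only difference is cosmetic: once you know $\H_i(Q_\b)\cong N^{\oplus a_i}$ with $N\neq 0$ and $M\neq 0$, it is immediate that $a_i\neq 0$ exactly when $\H_i(P_\b)\neq 0$, so $\hsup Q_\b=\hsup P_\b=s$ and $\sup Q_\b\le\max(t,s+1)$ without further casework; the paper reaches the same conclusion via a somewhat more elaborate case analysis on $i>\hsup P_\b$ and $i=\hsup P_\b$. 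Your shortcut is valid and arguably cleaner, but the underlying construction and bound are identical.
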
    
%\textcolor{red}{add bullets}
\begin{proof} We may assume that $M$ has finite quasi-projective dimension and $N\neq0$. Let $(P_{\bullet},\partial^P_{\bullet})$ be a bounded quasi projective resolution of $M$ such that $\qpd{R}M=\sup P_\b-\hsup P_\b$. So $H_i(P_{\bullet})\cong M^{\oplus a_i}$, where all but finitely many $a_i$ s are zero. Let $G_i=J^{\oplus a_i}$ (where $G_i=0$ if $a_i=0$) and consider the bounded complex 
\[
G_{\bullet}=(\cdots \to G_{i+1}\xrightarrow{0} G_i\xrightarrow{0} G_{i-1}\to \cdots )
\]
with homologies $\H_i(G_{\bullet})=G_i=J^{\oplus a_i}$.
By projectivity there is a map $G_i\rightarrow Z_i(P_\b)$ making the following diagram commute
\[
\begin{tikzcd}
                       & Z_i(P_{\bullet}) \arrow[d, two heads] \\
G_i \arrow[r,"j^{\oplus a_i}"'] \arrow[dashed, ru] & \H_i(P_\bullet)         
\end{tikzcd}
\]
Composing the map $G_i\to Z_i(P_{\bullet})$ with the inclusion $Z_i(P_{\bullet})\to P_i$ results in a map $\alpha_i:G_i \to P_i$ such that $\text{Im} (\alpha_i) \subseteq Z_i(P_{\bullet})$. Hence, $\partial_i^P\circ \alpha_i=0=\alpha_{i-1}\circ 0$. This shows that $\alpha: G\to P$ is a chain map and for all $g\in G_i$, $[\alpha_i(g)]=j^{\oplus a_i}(g)$ in $\H_i(P_{\bullet})\cong M^{\oplus a_i}$. It follows that the induced map in homology $\H_i(\alpha): \H_i(G_{\bullet})=J^{\oplus a_i}\to \H_i(P_{\bullet})$ coincides with the map $j^{\oplus a_i}: J^{\oplus a_i}\to M^{\oplus a_i}$, which is injective. The short exact sequence of complexes $0\to P_{\bullet} \to \cone(\alpha) \to \sum^1 G_{\bullet}\to 0$ induces the following long exact sequence of homologies  
\begin{equation}\label{eq:les}
\H_{i+1}(P_{\bullet})\to \H_{i+1}(\cone(\alpha))\to \H_i(G_{\bullet})\xrightarrow{j^{\oplus a_i}} \H_i(P_{\bullet})\to \H_i(\cone(\alpha))\to \H_{i-1}(G_{\bullet})\xrightarrow{j^{\oplus a_{i-1}}} \H_{i-1}(P_{\bullet}),
\end{equation}
so the injectivity of $j$ gives us the short exact sequences $0\to J^{\oplus a_i}\xrightarrow{j^{\oplus a_i}} M^{\oplus a_i}\to \H_i(\cone(\alpha))\to 0$.  Hence, $\H_i(\cone(\alpha))\cong N^{\oplus a_i}$.  Since $G_{\bullet}$ and $P_{\bullet}$ are bounded complexes, so is $\cone(\alpha)$, therefore it is a finite quasi-projective resolution of $N$.  

Now we show that $\qpd{R}(N)\leq\sup\{1,\qpd{R}(M)\}$. If $i>\hsup P_\bullet$, then $\H_i(G_\bullet)=\H_{i+1}(P_\bullet)=0$, and therefore it follows from \eqref{eq:les} that $\H_{i+1}(\cone(\alpha))=0$. Moreover, since $\H_i(P_\bullet)=0$ one has the following short exact sequence
\[
0\rightarrow \H_i(\cone(\alpha))\rightarrow\H_{i-1}(G_\bullet)\xra{j^{a_{i-1}}}\H_{i-1}(P_\bullet).
\]
If $a_{i-1}=0$, then it follows from the previous short exact sequence that $\H_i(\cone(\alpha))=0$, since $H_{i-1}(G_\bullet)\cong J^{a_{i-1}}=0$. If $a_{i-1}\neq0$, then $\H_i(\cone(\alpha))=0$ by the injectivity of $j^{a_{i-1}}$. This shows that $\H_{i}(\cone(\alpha))=0$ for $i>\hsup P_{\bullet}$. If $i=\hsup P_\bullet$, then one has the following exact sequence
\[
0\rightarrow\H_i(G_\bullet)\xra{j^{a_i}}\H_i(P_\bullet)\rightarrow \H_i(\cone(\alpha)).
\]
If $\H_i(\cone(\alpha))=0$, then $j$ would be an isomorphism, which is a contradiction, therefore $\H_i(\cone(\alpha))\neq0$. This proves that $\hsup P_\bullet=\hsup\cone(\alpha)$. It follows from the definition of mapping cone and from the definition of the complex $G_\bullet$ that
\[
\sup(\cone(\alpha))=\sup\{\hsup P_\bullet+1,\sup P_\bullet\}.
\]
Therefore
\[
\sup(\cone(\alpha))-\hsup(\cone(\alpha))=\sup\{1,\sup P_\bullet-\hsup P_\bullet\}.
\]
This shows that $\qpd{R}(N)\leq\sup\{1,\qpd{R}(M)\}$.
\end{proof} 

\begin{corollary}\label{stable} Let $J$ be a projective module and $M$ any module. Then, $\qpd{R}(M\oplus J)$ is finite if and only if $\qpd{R}(M)$ is finite. If $R$ is local, and $M$ and $J$ are finitely generated, then $\qpd{R}(M\oplus J)=\qpd{R}(M)$. 
\end{corollary}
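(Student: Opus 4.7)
For the forward direction of the first assertion, I apply \Cref{sesqpd} to the split short exact sequence
\[
0\longrightarrow J\xrightarrow{\iota}M\oplus J\xrightarrow{\pi}M\longrightarrow 0,
\]
where $\iota(y)=(0,y)$ and $\pi$ is the projection; this yields $\qpd{R}M\le\sup\{1,\qpd{R}(M\oplus J)\}$, which is finite whenever $\qpd{R}(M\oplus J)$ is. For the converse, I build a bounded quasi-projective resolution of $M\oplus J$ directly from one of $M$: given a bounded quasi-projective resolution $P_\bullet$ of $M$ with $\H_i(P_\bullet)\cong M^{\oplus a_i}$, form the bounded complex of projectives $G_\bullet$ with $G_i:=J^{\oplus a_i}$ and zero differentials, so that $\H_i(G_\bullet)=J^{\oplus a_i}$. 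The direct-sum complex $P_\bullet\oplus G_\bullet$ is then a bounded complex of projectives with $\H_i(P_\bullet\oplus G_\bullet)\cong(M\oplus J)^{\oplus a_i}$, i.e.\ a bounded quasi-projective resolution of $M\oplus J$.

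For the equality in the local, finitely generated setting, I observe that since $G_\bullet$ is concentrated within the homological support of $P_\bullet$, one has $\sup(P_\bullet\oplus G_\bullet)=\sup P_\bullet$ and $\hsup(P_\bullet\oplus G_\bullet)=\hsup P_\bullet$. Taking the infimum over all bounded quasi-projective resolutions of $M$ gives $\qpd{R}(M\oplus J)\le\qpd{R}M$. Combined with \Cref{sesqpd}'s bound $\qpd{R}M\le\sup\{1,\qpd{R}(M\oplus J)\}$, equality follows immediately whenever $\qpd{R}(M\oplus J)\ge 1$.

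The main obstacle is the remaining edge case $\qpd{R}(M\oplus J)=0$, where \Cref{sesqpd} only yields the weaker bound $\qpd{R}M\le 1$. To force $\qpd{R}M=0$ here I expect to exploit the local, finitely generated hypothesis via a minimality/splitting argument: by Kaplansky's theorem, finitely generated projectives over a local ring are free, so a quasi-projective resolution of $M\oplus J$ witnessing $\qpd{R}(M\oplus J)=0$ may be taken to consist of finitely generated free modules and, after minimization, to be minimal; the $J$-contribution at the top degree then splits off as a direct summand of complexes, leaving a quasi-projective resolution of $M$ with $\sup=\hsup$. This parallels the splitting idea underlying \cite[Proposition 3.3(4)]{qpd}, and is precisely where the finite generation together with the local hypothesis become essential.
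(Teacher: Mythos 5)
Your handling of the finiteness biconditional is correct: the direct-sum complex $P_\bullet\oplus G_\bullet$ re-derives the bound the paper cites from \cite[Proposition 3.3(3)]{qpd}, and your $\sup/\hsup$ bookkeeping does yield the inequality $\qpd{R}(M\oplus J)\le\qpd{R}M$ for nonzero $M$.

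The gap is in the equality, precisely at the edge case $\qpd{R}(M\oplus J)=0$ that you flag but only sketch. The proposed splitting step is not justified as stated: if $F_\bullet$ is a minimal bounded complex of finite free modules with $\sup F_\bullet=\hsup F_\bullet=s$ witnessing $\qpd{R}(M\oplus J)=0$, the top homology $\ker(\partial_s)\cong(M\oplus J)^{\oplus a_s}$ does contain a free direct summand $J^{\oplus a_s}$, but a free \emph{submodule} of the free module $F_s$ is not automatically a direct \emph{summand} of $F_s$ --- it can lie entirely inside $\mathfrak{m}F_s$ --- so there is no a priori reason the ``$J$-contribution'' splits off $F_s$, let alone that such a split propagates through lower degrees to give a subcomplex. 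Without filling this in, the scenario $\qpd{R}(M\oplus J)=0$ together with $\qpd{R}M=1$ is not excluded by your argument.

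The paper settles the equality by a different and cleaner route: it invokes the Auslander--Buchsbaum formula for quasi-projective dimension, \cite[Corollary 4.5(2)(a)]{qpd}, which says $\qpd{R}N=\depth R-\depth N$ whenever the left side is finite and $N\neq 0$. Since a nonzero finitely generated projective $J$ over a local ring is free, $\depth J=\depth R$, and finiteness of $\qpd{R}M$ together with the formula forces $\depth M\le\depth R$; thus $\depth(M\oplus J)=\depth M$ and $\qpd{R}(M\oplus J)=\qpd{R}M$ follows uniformly, with no edge case to treat separately. You should replace the sketched splitting argument with this appeal to \cite[Corollary 4.5(2)(a)]{qpd}.
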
 

\begin{proof}  The first assertion follows from the exact sequence $0\to J \to M\oplus J\to M\to 0$, Proposition \ref{sesqpd} and \cite[Proposition 3.3(3)]{qpd}. 

Now assume $R$ is local, and $J\neq 0$. Due to first part, we know $\qpd{R}(M)$ and $\qpd{R}(M\oplus J)$ are simultaneously finite. An application of \cite[Corollary 4.5(2)(a)]{qpd} yields the desired result. 
\end{proof}

The following Proposition can be proved as \cite[Proposition 3.5(1) or (2)]{qpd}, and its proof is therefore omitted. It will be used throughout the paper to study how quasi-projective dimension behaves under base change.

\begin{proposition}\label{prop:qpdBaseChange} Let $R\to S$ be a ring homomorphism and $M$ an $R$-module. If  $\Tor^R_{>0}(M,S)=0$, then $$\qpd{S}(M\otimes_R S)\le \qpd{R} M.$$  
Moreover, if $P_\b$ is a quasi-projective resolution of $M$, then $P_\b\otimes_RS$ is a quasi-projective resolution of $M\otimes_RS$.
\end{proposition}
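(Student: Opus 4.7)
The plan is to show directly that the base-changed complex $P_\bullet \otimes_R S$ is itself a quasi-projective resolution of $M \otimes_R S$ over $S$; this establishes the ``moreover'' statement, from which the inequality on quasi-projective dimensions is an immediate consequence. Fix an arbitrary bounded quasi-projective resolution $P_\bullet$ of $M$ over $R$, with $\H_i(P_\bullet) \cong M^{\oplus a_i}$. Since each $P_i$ is projective over $R$, each $P_i \otimes_R S$ is projective over $S$, and $P_\bullet \otimes_R S$ is a bounded complex of projective $S$-modules satisfying $\sup(P_\bullet \otimes_R S) \leq \sup P_\bullet$.

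The main step is the homology computation $\H_i(P_\bullet \otimes_R S) \cong (M \otimes_R S)^{\oplus a_i}$. For this I would invoke the hyperTor spectral sequence
\[
E^2_{p,q} = \Tor^R_p(S, \H_q(P_\bullet)) \Longrightarrow \H_{p+q}(S \lotimes_R P_\bullet).
\]
Since $P_\bullet$ is a bounded below complex of projective $R$-modules, $S \lotimes_R P_\bullet$ is represented by $S \otimes_R P_\bullet$ in $\D(S)$. The identification $\H_q(P_\bullet) \cong M^{\oplus a_q}$ together with the hypothesis $\Tor^R_{>0}(S, M) = 0$ forces $E^2_{p,q} = 0$ for $p > 0$, so the spectral sequence collapses to give $\H_q(P_\bullet \otimes_R S) \cong S \otimes_R M^{\oplus a_q} \cong (M \otimes_R S)^{\oplus a_q}$, as wanted. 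This exhibits $P_\bullet \otimes_R S$ as a quasi-projective resolution of $M \otimes_R S$ over $S$.

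For the inequality, if $M \otimes_R S = 0$ then $\qpd{S}(M \otimes_R S) = -\infty$ and there is nothing to prove. Otherwise $\hsup(P_\bullet \otimes_R S) = \hsup P_\bullet$, so
\[
\qpd{S}(M \otimes_R S) \leq \sup(P_\bullet \otimes_R S) - \hsup(P_\bullet \otimes_R S) \leq \sup P_\bullet - \hsup P_\bullet,
\]
and taking the infimum over all bounded quasi-projective resolutions of $M$ yields $\qpd{S}(M \otimes_R S) \leq \qpd{R} M$. The only delicate point is the homology computation, which is standard once the spectral sequence is in hand; an alternative avoiding spectral sequences would induct on the length of $P_\bullet$, using the short exact sequences $0 \to B_i \to Z_i \to \H_i(P_\bullet) \to 0$ and $0 \to Z_i \to P_i \to B_{i-1} \to 0$ to propagate the hypothesis $\Tor^R_{>0}(S, M) = 0$ through the syzygies and thereby obtain the same conclusion.
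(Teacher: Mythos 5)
Your argument is correct, and it matches the standard route: the paper itself omits a proof and simply points to \cite[Proposition~3.5(1),(2)]{qpd}, whose method is precisely the hyperTor spectral sequence you invoke (the paper uses the same spectral sequence elsewhere, citing \cite[Lemma~2.5]{qpd}, and your elementary alternative via cycles/boundaries is exactly what appears later in the paper as \Cref{rmk:Exercise}). The collapse at the $p=0$ column from $\Tor^R_{>0}(M,S)=0$ is the key point, and the bookkeeping on $\sup$ and $\hsup$ is handled cleanly.

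One small caveat, which you almost but not quite address: when $M\otimes_R S=0$, the complex $P_\bullet\otimes_R S$ is acyclic and hence is \emph{not} a quasi-projective resolution of $M\otimes_R S$ in the sense of the definition (the integers $a_i$ are required to be not all zero). You correctly dispose of the dimension inequality in that case by the convention $\qpd{S}0=-\infty$, but the ``moreover'' statement should be read as holding under the implicit assumption $M\otimes_R S\neq0$; it is worth making that explicit rather than leaving it to the reader to notice.
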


\begin{definition}
Following \cite{Jorgensen}, we say that finitely generated modules $M$ and $N$ over a local ring $R$ satisfy the \emph{dependency formula} if $\Tor^R_{>>0}(M,N)=0$ implies
\[
\sup\{i\mid\Tor_i^R(M,N)\neq0\}=\sup\{\depth_{R_\p}R_\p-\depth_{R_\p}M_\p-\depth_{R_\p}N_\p\mid\p\in\Spec(R)\}.
\]
\end{definition}

\begin{theorem}\label{thm:DepFor}
    Let $R$ be a local ring and let $M$, $N$ be nonzero finitely generated $R$-modules. Assume that $\qpd{R}M<\infty$ and $\Tor^R_{>>0}(M,N)=0$. Then $M$ and $N$ satisfy  the dependency formula.
\end{theorem}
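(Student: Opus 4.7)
My plan is to leverage the perfect-complex structure of a bounded quasi-projective resolution of $M$ and reduce the statement to a hyperhomology spectral sequence analysis together with the Auslander dependency formula applied to perfect complexes. First, using $\qpd{R}M < \infty$ and the finite generation of $M$ over a local $R$, I would pick a bounded quasi-projective resolution $P_\b$ of $M$ realizing $\qpd{R}M$ with finitely generated free components (a standard reduction for f.g.~modules, cf.~\cite{qpd}). Then $H_q(P_\b)\cong M^{\oplus a_q}$ for nonnegative integers $a_q$, almost all zero, so $P_\b$ is a perfect $R$-complex.

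Next, I would analyze the hyperhomology spectral sequence
\[
E^2_{p,q} = \Tor_p^R\bigl(H_q(P_\b), N\bigr) \cong \Tor_p^R(M,N)^{\oplus a_q}\ \Longrightarrow\ H_{p+q}(P_\b\otimes_R^L N).
\]
Set $s := \sup\{i\mid \Tor_i^R(M,N)\neq 0\}$ (finite by hypothesis) and $t := \hsup P_\b$. The vanishing of $E^2_{p,q}$ for $p > s$ or $q > t$ implies that the corner $E^2_{s,t}$ is isolated: no differential at any page can involve it, so it survives to $E^\infty$; moreover no term of total degree exceeding $s + t$ is nonzero. Hence $\hsup(P_\b\otimes_R^L N) = s+t$, and the same analysis localized at any $\p\in\Supp M$ gives $\hsup(P_{\b,\p}\otimes_{R_\p}^L N_\p) = s_\p + t$ with $s_\p := \sup\{i : \Tor_i^{R_\p}(M_\p,N_\p)\neq 0\}$; note $\hsup P_{\b,\p} = t$ since $M_\p \neq 0$ forces $H_q(P_{\b,\p}) = M_\p^{\oplus a_q}$ to be nonzero exactly when $a_q\neq 0$.

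For the final step, I would invoke the dependency formula for perfect complexes: for a perfect complex $X$ and finitely generated $N$ over a Noetherian ring $R$,
\[
\hsup(X\otimes_R^L N) = \sup_{\p\in\Spec R}\bigl\{\depth R_\p - \depth_{R_\p}(X_\p) - \depth_{R_\p}(N_\p)\bigr\}.
\]
This follows from Iyengar's depth formula $\depth(X\otimes^L Y) = \depth X + \depth Y - \depth R$ (valid for perfect $X$ and $Y$ with f.g.~homology over local $R$), combined with the fact that $\depth Z = -\hsup Z$ for complexes $Z$ over a field. Applied to $X = P_\b$ with Iyengar's formula at the residue field, together with the minimality of $P_\b$ (so that $\hsup(P_\b\otimes\kappa) = \sup P_\b$) and the Auslander-Buchsbaum formula for quasi-projective dimension (\Cref{thm:AB}), one obtains $\depth_{R_\p} P_{\b,\p} = \depth_{R_\p} M_\p - t$ for each $\p\in\Supp M$. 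Substituting,
\[
s + t = t + \sup_{\p\in\Supp M}\{\depth R_\p - \depth M_\p - \depth N_\p\},
\]
from which the dependency formula follows after noting that primes outside $\Supp M$ contribute $-\infty$.

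The main obstacle I anticipate is the perfect-complex dependency formula invoked in the final step: while it should follow from Iyengar's depth formula by standard derived-category bookkeeping, the precise tracking of shifts between $\depth$, $\hsup$, and $\hinf$ requires care. A secondary technical point is establishing $\depth_{R_\p} P_{\b,\p} = \depth_{R_\p} M_\p - t$, which relies on choosing a minimal quasi-projective resolution (so that the differentials in $P_\b\otimes\kappa$ vanish) together with the Auslander-Buchsbaum formula for quasi-projective dimension.
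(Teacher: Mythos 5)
Your proof is correct in outline but takes a genuinely different route from the paper's. The paper works entirely inside the minimal quasi-free resolution $F_\b$ of $M$: it chases short exact sequences among cycles, boundaries, and cokernels $C_j$, proves the pointwise depth inequality $\depth_{R_\p}(C_j)_\p\ge\depth_{R_\p}M_\p$ by induction with the depth lemma, and then closes by combining the reduction criterion of \cite[Proposition 2.6]{Jorgensen} with \cite[Theorem 2.7]{Jorgensen} and the quasi-projective depth formula from \cite{qpd}, finishing with a contradiction argument on $\sup\{i:\Tor^R_i(M,N)\neq0\}$. You instead argue at the level of the derived category: a corner-survival analysis in the spectral sequence $\Tor_p^R(H_q(P_\b),N)\Rightarrow H_{p+q}(P_\b\otimes^L_RN)$ gives $\hsup(P_\b\otimes^L_RN)=s+t$, and you then import a dependency formula for perfect complexes to solve for $s$. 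Your route is more conceptual and replaces the exact-sequence bookkeeping by a single spectral-sequence observation, but it relies on two facts the paper does not cite and which you would need to supply precisely: Iyengar's depth formula for perfect complexes tensored against $\D^f_+$-objects, and the Chouinard-type identity $\hsup Z=\sup_\p\{-\depth_{R_\p}Z_\p\}$ for $Z\in\D^f_b(R)$. Your phrasing \emph{\textquotedblleft$\depth Z=-\hsup Z$ for complexes over a field\textquotedblright} is not the statement actually used; it is the supremum-over-primes identity that drives the derivation.

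Two points to tighten. First, $\depth_{R_\p}P_{\b,\p}=\depth_{R_\p}M_\p-t$ is correct but does not follow from minimality of $P_\b$ at the closed point alone, since $P_{\b,\p}$ need not stay minimal after localization and hence $\hsup(P_{\b,\p}\otimes\kappa(\p))$ need not equal $\sup P_\b$. The right argument applies the quasi-projective Auslander--Buchsbaum formula of \cite{qpd} over $R_\p$ to a minimalization $P'_\b$ of $P_{\b,\p}$: the homology is unchanged so $\hsup P'_\b=t$, whence $\pd_{R_\p}P_{\b,\p}=\sup P'_\b=\depth R_\p-\depth_{R_\p}M_\p+t$, and Auslander--Buchsbaum for the perfect complex $P_{\b,\p}$ then yields the claim. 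Second, \Cref{thm:AB} is the $C$-quasi-projective version; for plain quasi-projective dimension you want the corresponding theorem in \cite{qpd}. These are fixable, and the underlying idea is sound.
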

\begin{proof}
    Let $F_\b=(0\to F_n \overset{\partial_n}\to F_{n-1} \to \dots \overset{\partial_1}\to F_0 \to 0)$ be a minimal quasi-free resolution of $M$, see \cite[Proposition 4.1]{qpd} for a proof of its existence. Set $Z_i=\ker (\partial_i)$, $B_i=\mathrm{Im} (\partial_i)$, $C_i=F_i/B_{i+1}$, and $s=\hsup F_\b$. It follows that $\pd_RC_s<\infty$. First we show for all $\p \in \Spec R$, $\depth_{R_\p}(C_s)_\p\geq \depth_{R_\p}M_\p$.
    Indeed, we even claim the stronger statement that $\depth_{R_\p} (C_j)_{\p}\ge \depth_{R_\p} M_{\p}$ for all $j\ge 0$ and $\p \in \Spec(R)$. We prove this by induction: For the base case $j=0$, we see that $(C_0)_\p=(H_0(F_\b))_\p$, hence $\depth_{R_\p} (C_0)_\p\ge \depth_{R_\p} M_\p$.  Now let $j\ge 0$ and assume the claim is true for $(C_j)_\p$.  Consider the following exact sequences 
    $$\begin{cases}
    %0\to (Z_j)_\p\to (F_j)_\p\to (B_j)_\p\to0\\
    %0\to (B_{j+1})_\p\to (Z_j)_\p\to \H_j(F)_\p\to0 \\
    0\to (B_{j+1})_\p \to (F_j)_p\to (C_j)_\p\to 0\\
    0\to \H_{j+1}(F_\b)_\p \to (C_{j+1})_\p \to (B_{j+1})_\p \to 0,
    \end{cases}$$
    then
    \begin{align*}
\depth_{R_\p}(C_{j+1})_\p&\geq\inf\{\depth_{R_\p} H_{j+1}(F_\b)_\p,\depth_{R_\p}(B_{j+1})_\p\}\\
&\geq\inf\{\depth_{R_\p} M_\p,\depth_{R_\p} R_\p,\depth_{R_\p}(C_j)_{\p}+1\}\\
&\geq\inf\{\depth_{R_\p} M_\p,\depth_{R_\p} R_\p, \depth_{R_\p} M_\p+1\}\\
&\geq\depth_{R_\p} M_\p,
    \end{align*}
where the first inequality follows from the depth lemma applied to the second short exact sequence, the second inequality follows from $\depth_{R_\p} \H_j(F_\b)_\p\geq\depth_{R_\p} M_\p$ for all $j$ and from applying the depth lemma to the first short exact sequence, the third inequality is by the inductive hypothesis  and the last one follows from the Auslander-Buchsbaum formula for $M_\p$ which has finite quasi-projective dimension over $R_{\p}$. 
    
By using \cite[Proposition 2.6]{Jorgensen} and \cite[Theorem 4.11]{qpd}, it is enough to show that if $\depth_{R_\p}\Omega^i M_\p + \depth_{R_p}N_\p\geq \depth R_\p$ for all $\p \in \Spec R$, then $\Tor^R_{>0}(\Omega^i M,N)=0$. By \cite[Proposition 3.3(4)]{qpd} $ \qpd{R}\Omega^iM<\infty$ for all $i\geq0$, therefore it suffices to prove the claim for $i=0$. Assume $\depth_{R_\p}M_\p + \depth_{R_p}N_\p\geq \depth R_\p$  for all $\p \in \Spec R$. Since $\depth_{R_\p}(C_s)_\p \geq \depth_{R_\p}M_\p$  for all $\p \in \Spec R$ and $\pd_R C_s<\infty$, we have $\Tor^R_{>0}(C_s,N)=0$ by \cite[Theorem 2.7]{Jorgensen}. Let $t=\sup\{i|\Tor^R_i(M,N)\neq 0\}$. If $t>0$, the exact sequence $0 \to \H_s(F) \to C_s \to B_s \to 0$ shows that $\Tor^R_{t+1}(B_s,N)\cong \Tor^R_t(\H_s(F_\b),N)\neq 0$. On the other hand, by using the exact sequences 
    $$\begin{cases}
    0\to Z_j\to F_j\to B_j\to0\\
    0\to B_{j+1}\to Z_j\to \H_j(F_\b)\to0 
    \end{cases}$$
    and induction argument, one has $\sup\{i|\Tor^R_i(B_j,N)\neq 0\}\leq t$ for all $j$. Indeed: For $j=0$, $B_0=0$, and $B_1$ is isomorphic to a syzygy (up to free summand) of $H_0(F_\b)\cong M^{\oplus a_0}$. Hence, $\Tor^R_{>t}(H_0(F_\b),N)=0$ gives $\Tor^R_{>t}(B_1,N)=0$. If $\Tor^R_{>t}(B_j, N)=0$, then the first short exact sequence above implies $\Tor^R_{>t}(Z_j,N)=0$, and then the second short exact sequence gives $\Tor^R_{>t}(B_{j+1},N)=0$. This is a contradiction for $j=s$, and therefore $t>0$ cannot be true, i.e., $t=0$.
\end{proof}

\begin{example}
Let $l,m,n$ be integers $\geq2$ and let $i,j\in\{1,\ldots,n\}$. Let $\kk$ be a field. Consider the following ring and the following modules
\[
R=\frac{\kk[[x_1,\ldots,x_l,y_1,\ldots,y_m,z_1,\ldots,z_n]]}{(x_1,\ldots,x_l)^2+(y_1,\ldots,y_m)^2},\quad M=\frac{R}{(x_1,\ldots,x_l,z_1,\ldots,z_i)},\quad N=\frac{R}{(y_1,\ldots,y_m,z_j,\ldots,z_n)}.
\]
Then by \cite[Example 4.1]{Jorgensen}, it follows that 
\[
\cidim_RM=\infty,\quad\cidim_RN=\infty,\quad \Tor^R_{>>0}(M,N)=0,
\]
therefore one cannot use \cite[Theorem 2.2]{Jorgensen} to deduce that $M$ and $N$ satisfy the dependency formula. We show that $M$ has finite quasi-projective dimension, and therefore by \Cref{thm:DepFor} $M$ and $N$ satisfy the dependency formula.  Indeed, the map
\[
A\colonequals\frac{\kk[[x_1,\ldots,x_l]]}{(x_1,\ldots,x_l)^2}\longrightarrow R
\]
is flat. Since by \cite[Proposition 3.6(1)]{qpd} $\qpd{A}A/(x_1,\ldots,x_n)<\infty$, the flatness of the map above implies $\qpd{R}R/(x_1,\ldots,x_l)<\infty$ by \cite[Proposition 3.5(1)]{qpd}. By the regularity of the sequence $z_1,\ldots, z_i$ on both $R$ and $R/(x_1,\ldots,x_l)$, it follows that $\qpd{R}M<\infty$ by \cite[Proposition 3.5(2) and (3)]{qpd}.
\end{example}

The following Lemma will be used in the proof of \Cref{cor:cqpd=cpd}. It shows that the $C$-projective dimension can be computed locally.
\begin{lemma}\label{lem:cpdloc}
Let $C$ be a semidualizing $R$-module, and $M$ a finitely generated $R$-module, then
\[
\cpd{C}{R}M=\sup\{\cpd{C_\p}{R_\p}M_\p\mid\p\in\Spec R\}.
\]
\end{lemma}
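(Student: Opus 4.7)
The plan is to reduce the statement to the analogous (and well-known) fact for ordinary projective dimension by invoking the Takahashi--White transfer formula \cite[Theorem 2.11]{TakWhite}, which says $\cpd{C}{R}M<\infty$ if and only if $M\in\B{C}(R)$ and $\pd_R\Hom_R(C,M)<\infty$, and in that case $\cpd{C}{R}M=\pd_R\Hom_R(C,M)$. A parallel statement holds over each $R_\p$ with respect to the semidualizing $R_\p$-module $C_\p$.

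First I would establish the easy inequality $\cpd{C_\p}{R_\p}M_\p\le\cpd{C}{R}M$ for every $\p\in\Spec R$. If $\cpd{C}{R}M=\infty$ there is nothing to do; otherwise, a bounded $C$-projective resolution $C\otimes_RP_\b\to M$ localizes to a bounded $C_\p$-projective resolution of $M_\p$ of the same length, since localization is exact, commutes with tensor product, and preserves projectivity. Taking supremum over $\p$ yields one direction.

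For the reverse inequality, assume $n\colonequals\sup_{\p}\cpd{C_\p}{R_\p}M_\p$ is finite, otherwise the claim is trivial. Then for every $\p$ the local transfer formula gives $M_\p\in\B{C_\p}(R_\p)$. The defining conditions of $\B{C}(R)$---the bijectivity of the evaluation map and the vanishing of $\Ext^{>0}_R(C,M)$ and $\Tor^R_{>0}(C,\Hom_R(C,M))$---are local properties under the standard identifications $\Ext^i_R(C,M)_\p\cong\Ext^i_{R_\p}(C_\p,M_\p)$, $\Tor^R_i(C,\Hom_R(C,M))_\p\cong\Tor^{R_\p}_i(C_\p,\Hom_{R_\p}(C_\p,M_\p))$, and $\Hom_R(C,M)_\p\cong\Hom_{R_\p}(C_\p,M_\p)$, all of which hold because $C$ and $M$ are finitely generated over the Noetherian ring $R$. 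Hence $M\in\B{C}(R)$ globally. Applying Takahashi--White both globally and locally,
\[
\cpd{C}{R}M=\pd_R\Hom_R(C,M), \qquad \cpd{C_\p}{R_\p}M_\p=\pd_{R_\p}\Hom_R(C,M)_\p.
\]
Since $\Hom_R(C,M)$ is finitely generated, its projective dimension is the supremum of its local projective dimensions, so $\pd_R\Hom_R(C,M)\le n$, and the two inequalities combine to give equality.

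The only non-formal step is verifying that $\B{C}(R)$-membership descends from the localizations, but this is a routine consequence of the compatibility of $\Hom$, $\Ext$, and $\Tor$ with localization for finitely generated modules, so I expect no substantive obstacle.
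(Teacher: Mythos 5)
Your argument is correct, and it is built on exactly the same core tool as the paper's proof: the Takahashi--White transfer formula $\cpd{C}{R}M=\pd_R\Hom_R(C,M)$ together with the locality of ordinary projective dimension for finitely generated modules and the compatibility $\Hom_R(C,M)_\p\cong\Hom_{R_\p}(C_\p,M_\p)$. The difference is only one of packaging. The paper reads \cite[Theorem~2.11(c)]{TakWhite} as an unconditional equality of extended integers, so the lemma becomes a three-line chain of equalities with no case analysis: apply the transfer globally, localize $\pd$, apply the transfer at each $\p$. Your version instead breaks the statement into two inequalities and, for the nontrivial direction, inserts a check that $M_\p\in\B{C_\p}(R_\p)$ for all $\p$ forces $M\in\B{C}(R)$ before invoking the transfer. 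That localization of the Bass class is a correct and mildly interesting observation, but it is unnecessary work here; the transfer formula already absorbs the infinite cases, so the Bass class never appears in the paper's argument. Likewise the explicit localization of a $C$-projective resolution for the easy inequality is subsumed in the direct chain of equalities. In short: same route, a few extra (harmless) detours.
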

\begin{proof}
By \cite[Theorem 2.11c]{TakWhite}, the first and last equalities below hold (as $C_\p$ is also semidualzing over $R_\p$)
\begin{align*}
\cpd{C}{R}M&=\pd_R\Hom_R(C,M)\\
&=\sup\{\pd_{R_\p}\Hom_{R_\p}(C_\p,M_\p)\mid\p\in\Spec R\}\\
&=\sup\{\cpd{C_\p}{R_\p}M_\p\mid\p\in\Spec R\}.\qedhere
\end{align*}

\end{proof}
\begin{comment}
Let $n$ be a nonnegative integer. By \cite[Theorem 3.2(a) and Theorem 4.1]{TakWhite}, it follows that
\begin{align*}
\cpd{C}{R}M\leq n &\Leftrightarrow \Ext_R^{n+1}(\Hom_R(C,M),\Hom_R(C,-))=0\\
&\Leftrightarrow\Ext_{R_\p}^{n+1}(\Hom_{R_\p}(C_\p,M_\p),\Hom_{R_p}(C_\p,-))=0\quad\forall\p\in\Spec R\\
&\Leftrightarrow\cpd{C_\p}{R_\p}M_\p\leq n\quad\forall\p\in\Spec R,
\end{align*}
yielding the desired equality. \textcolor{red}{finish this proof or rewrite it using the transfer, add a cid version, do it for complexes}
\end{comment}

The next result proves the Depth formula for modules of finite projective dimension with respect to a semidualizing module.

\begin{theorem}\label{thm:cpdDepth}
Let $R$ be a local ring. Let $M$ and $N$ be finitely generated $R$-modules and $C$ a semidualizing $R$-module. If the following conditions are satisfied
\begin{enumerate}
\item $\cpd{C}{R}N<\infty$,
\item $M\in\A{C}(R)$,
\item $\Tor_{>0}^R(M,N)=0$,
\end{enumerate}
then
\[
\depth(M\otimes_RN)=\depth M+\depth N-\depth R.
\]
\end{theorem}
\begin{proof}
By \cite[Theorem 2.21(c)]{TakWhite} $\pd_R\Hom_R(C,N)<\infty$, and by \cite[Corollary 2.9(a)]{TakWhite} $N\in\B{C}(R)$. By \cite[Lemma 3.1.13(c)]{KeriSD} it follows that
\[
\Tor^R_i(M,N)\cong\Tor^R_i(M\otimes_RC,\Hom_R(C,N)),\quad\forall i,
\]
therefore the classic depth formula applies yielding
\[
\depth(M\otimes_RC\otimes_R\Hom_R(C,N))=\depth(M\otimes_RC)+\depth\Hom_R(C,N)-\depth R.
\]
We notice that since $N\in\B{C}(R)$ it follows that $C\otimes_R\Hom_R(C,N)\cong N$ and therefore the left-hand side of the previous display reduces to $\depth(M\otimes_RN)$. Moreover $\depth\Hom_R(C,N)=\depth N$ by \cite[Lemma 3.9]{AiTak}. It remains to show that $\depth(M\otimes_RC)=\depth M$. Since $M\in\A{C}(R)$ it follows that $\Ext_R^{>0}(C,M\otimes_RC)=0$, and therefore by \cite[Lemma 3.9]{AiTak} $\depth\Hom_R(C,M\otimes_RC)=\depth(M\otimes_RC)$, but $\Hom_R(C,M\otimes_RC)\cong M$ since $M\in\A{C}(R)$, concluding the proof.
\end{proof}
The next result proves the dependency formula for modules of finite projective dimension with respect to a semidualizing module.

\begin{corollary}\label{cor:DepForCpd}
    Let $R$ be a local ring, $C$ a semidualizing $R$-module and let $M$, $N$ be nonzero finitely generated $R$-modules. Assume that
    \begin{enumerate}
    \item $\cpd{C}{R}M<\infty$,
    \item $N\in\A{C}(R)$,
    \item $\Tor^R_{>>0}(M,N)=0$.
    \end{enumerate}
    Then $M$ and $N$ satisfy the dependency formula.
\end{corollary}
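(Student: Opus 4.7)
The plan is to reduce the claim to the dependency formula for modules of finite quasi-projective dimension established in \Cref{thm:DepFor}, by passing through the Foxby equivalence between $\A{C}(R)$ and $\B{C}(R)$.

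First, I would use the transfer result \cite[Theorem 2.11(c)]{TakWhite} (already invoked in the proof of \Cref{thm:cpdDepth}) to conclude that $\cpd{C}{R}M<\infty$ forces $M\in\B{C}(R)$ and $\pd_R\Hom_R(C,M)<\infty$; in particular $\qpd{R}\Hom_R(C,M)<\infty$. Combined with the hypothesis $N\in\A{C}(R)$, the standard Foxby isomorphism \cite[Lemma 3.1.13(c)]{KeriSD} yields
\[
\Tor^R_i(M,N)\;\cong\;\Tor^R_i\bigl(\Hom_R(C,M),\,C\otimes_RN\bigr)\qquad\text{for all }i\ge 0.
\]
Hence $\Tor^R_{\gg 0}(\Hom_R(C,M),C\otimes_RN)=0$, and the left-hand sides of the dependency formulas for the pairs $(M,N)$ and $(\Hom_R(C,M),C\otimes_RN)$ coincide.

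Next, applying \Cref{thm:DepFor} to the pair $\bigl(\Hom_R(C,M),\,C\otimes_RN\bigr)$ produces
\[
\sup\{i\mid\Tor^R_i(M,N)\ne 0\}=\sup_{\p\in\Spec R}\bigl\{\depth R_\p-\depth_{R_\p}\Hom_{R_\p}(C_\p,M_\p)-\depth_{R_\p}(C_\p\otimes_{R_\p}N_\p)\bigr\}.
\]
To convert the right-hand side into the desired shape, I would observe that membership in the Auslander and Bass classes localizes in the finitely generated setting: each defining condition (the relevant natural morphism being an isomorphism, together with the vanishing of the relevant Tor or Ext) passes to localizations, so $M_\p\in\B{C_\p}(R_\p)$ and $N_\p\in\A{C_\p}(R_\p)$ for every $\p\in\Spec R$. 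Then, exactly as in the proof of \Cref{thm:cpdDepth}, \cite[Lemma 3.9]{AiTak} gives the depth identities
\[
\depth_{R_\p}\Hom_{R_\p}(C_\p,M_\p)=\depth_{R_\p}M_\p,\qquad \depth_{R_\p}(C_\p\otimes_{R_\p}N_\p)=\depth_{R_\p}N_\p,
\]
for every $\p\in\Spec R$, and substituting these in completes the argument.

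The only real subtlety is the bookkeeping around the local behavior of the Auslander/Bass classes and the attendant depth identities; the content is packaged in the Foxby isomorphism, which is precisely what permits one to transport the dependency formula across the semidualizing equivalence and land inside the hypotheses of \Cref{thm:DepFor}.
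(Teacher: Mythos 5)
Your argument is correct and follows the paper's strategy essentially verbatim: transport the pair across the Foxby equivalence via $\Tor^R_i(M,N)\cong\Tor^R_i(\Hom_R(C,M),C\otimes_RN)$, apply a known dependency formula to the reduced pair, and convert back using localization of the Auslander and Bass classes together with the depth identities $\depth_{R_\p}\Hom_{R_\p}(C_\p,M_\p)=\depth_{R_\p}M_\p$ and $\depth_{R_\p}(C_\p\otimes_{R_\p}N_\p)=\depth_{R_\p}N_\p$. The single point of divergence is that you apply \Cref{thm:DepFor} (the quasi-projective dependency formula proved in this paper) to $\Hom_R(C,M)$, whereas the paper instead cites Jorgensen's original theorem for modules of finite complete intersection dimension; both apply because $\pd_R\Hom_R(C,M)<\infty$, so the choice is immaterial (though you should note that $\Hom_R(C,M)$ and $C\otimes_RN$ are nonzero, which follows from $M\in\B{C}(R)$ and $N\in\A{C}(R)$ respectively, since \Cref{thm:DepFor} assumes nonzero modules).
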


\begin{proof}
By \cite[Corollary 2.9(a)]{TakWhite}, $M\in\B{C}(R)$, therefore by \cite[Lemma 3.1.13(c)]{KeriSD}
\[
\Tor^R_i(M,N)\cong\Tor_R^i(\Hom_R(C,M),C\otimes_RN)\quad\forall i.
\]
By \cite[Theorem 2.11(c)]{TakWhite} $\pd_R\Hom_R(C,M)<\infty$, therefore by \cite[Theorem 2.2]{Jorgensen}
\[
\sup\{i\mid\Tor^R_i(M,N)\neq 0\}=\sup\{\depth R_{\p}-\depth_{R_\p}\Hom_{R_\p}(C_\p,M_\p)-\depth_{R_p}(C_\p\otimes_{R_\p}N_\p)\mid\p\in \Spec R\}.
\]
By \cite[Corollary 3.4.2]{KeriSD}, $C_\p$ is a semidualizing $R_\p$-module. By \cite[Proposition 3.4.8]{KeriSD}, $M_\p\in\B{C_\p}(R_\p)$. It follows by \cite[Lemma 3.9]{AiTak} that $\depth_{R_\p}\Hom_{R_\p}(C_\p,M_\p)=\depth_{R_\p}M_\p$. By \cite[Proposition 3.4.7]{KeriSD} $N_\p\in\A{C_\p}(R_\p)$. \Cref{thm:cpdDepth} and \cite[Theorem 2.2.6(c)]{KeriSD} imply $\depth_{R_\p}(C_\p\otimes_{R_\p}N_\p)=\depth_{R_\p}N_\p$, proving the dependency formula.
\end{proof}

\begin{remark}
For the definition and properties of $\cpd{C}{R}$ and $\cid{C}{R}$ for complexes that will be used in the next proof and throughout the paper, we refer the reader to \cite{totushek}.
\end{remark}

The next Lemma will be used later in the paper. It can be proved as \cite[Lemma 4.1]{GheibiZargar}, we provide a different proof.

\begin{lemma}\label{lem:thick}
Let $C$ be a semidualizing $R$-module and let $X_\b$ be a bounded $R$-complex with finitely generated homology.
\begin{enumerate}
\item If $\cid{C}{R}\H_i(X_\b)<\infty$ for all $i$, then $\cid{C}{R}X_\b<\infty$.
\item If $\cpd{C}{R}\H_i(X_\b)<\infty$ for all $i$, then $\cpd{C}{R}X_\b<\infty$.
\end{enumerate}
\end{lemma}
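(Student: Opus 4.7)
The plan is to proceed by induction on the amplitude $\hsup X_\b - \hinf X_\b$, handling both statements in parallel since they are formally dual. In the base case $\hsup X_\b = \hinf X_\b = n$, the complex $X_\b$ has a single nonzero homology module and is therefore isomorphic in $\D(R)$ to $\Sigma^n \H_n(X_\b)$. Since $C$-projective and $C$-injective dimension of complexes are invariant under shift, this gives $\cpd{C}{R} X_\b = \cpd{C}{R} \H_n(X_\b) < \infty$ in case (2), and the analogous equality in case (1).

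For the inductive step, let $s = \hsup X_\b$ and consider the standard soft truncation triangle
$$\tau_{\leq s-1} X_\b \longrightarrow X_\b \longrightarrow \Sigma^s \H_s(X_\b) \longrightarrow \Sigma \tau_{\leq s-1} X_\b$$
in $\D(R)$. The truncation $\tau_{\leq s-1} X_\b$ satisfies $\H_i(\tau_{\leq s-1} X_\b) = \H_i(X_\b)$ for $i \leq s - 1$ and vanishes above, so all its nonzero homologies have finite $C$-projective (resp.\ $C$-injective) dimension by hypothesis, while its amplitude is strictly smaller than that of $X_\b$. The inductive hypothesis yields $\cpd{C}{R} \tau_{\leq s-1} X_\b < \infty$, and $\cpd{C}{R} \Sigma^s \H_s(X_\b) < \infty$ by assumption; the $C$-injective case is identical.

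To conclude it then suffices to know that the class of bounded complexes of finite $C$-projective (resp.\ $C$-injective) dimension is closed under extensions in exact triangles. The cleanest way to see this is via the transfer results for complexes from \cite{totushek}: for $Y_\b \in \D_b^f(R)$, one has $\cpd{C}{R} Y_\b < \infty$ if and only if $Y_\b$ lies in the (derived) Bass class with respect to $C$ and $\pd_R \RHom_R(C, Y_\b) < \infty$, and dually for $\cid{C}{R}$. Both conditions on the right are preserved under triangles, since the derived Bass and Auslander classes are thick subcategories of $\D(R)$, and finite projective or injective dimension of complexes is closed under triangles. Applying this to the truncation triangle above completes the inductive step.

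The main obstacle is precisely the last step: making rigorous that finite $C$-projective (resp.\ $C$-injective) dimension of complexes passes through exact triangles. Once one invokes the transfer equivalence with finite $\pd_R \RHom_R(C, -)$ (resp.\ finite $\id_R(C \lotimes_R -)$) on the appropriate derived Bass/Auslander class, the rest is a routine induction on amplitude. Everything else, including the base case and the amplitude reduction, is formal.
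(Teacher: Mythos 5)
Your argument is correct, and it takes a genuinely more uniform route than the paper's. You handle both parts by a single induction on amplitude via the soft truncation triangle $\tau_{\leq s-1}X_\b \to X_\b \to \Sigma^s\H_s(X_\b) \to$, reducing everything to the single assertion that finite $C$-projective (resp.\ $C$-injective) dimension is closed under triangles, which you extract from the transfer to ordinary $\pd_R\RHom_R(C,-)$ (resp.\ $\id_R(C\lotimes_R-)$) together with thickness of the derived Bass/Auslander class. The paper instead gives two quite different arguments. For the $C$-injective case it runs a spectral sequence $\Tor_j^R(\H_i(X_\b),C)\Rightarrow\H_{i+j}(X_\b\lotimes_RC)$ and uses that each $\H_i(X_\b)$ lies in $\A{C}(R)$ to collapse it, identifying $\H_i(X_\b\lotimes_RC)\cong\H_i(X_\b)\otimes_RC$ and thereby reducing directly to the ordinary injective-dimension version of the lemma from \cite{GheibiZargar}. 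For the $C$-projective case it observes $\H_i(X_\b)\in\thick_R(C)$, cites \cite[3.10]{DGI} to get $X_\b\in\thick_R(C)$, and then applies $\RHom_R(C,-)$; the cited DGI result is itself the amplitude induction you carry out explicitly. So your proof is essentially a self-contained version of the paper's second part, extended symmetrically to the injective side where the paper instead used a spectral-sequence reduction. What the paper's route buys is that it never needs to invoke thickness of the derived Bass/Auslander classes (part (1) bypasses them entirely, and part (2) works inside $\thick_R(C)$ before applying $\RHom$); what your route buys is a single, parallel argument for both statements, avoiding the spectral sequence and the collapse argument altogether. One small caution: make sure the characterization of finite $\cid{C}{R}$ and $\cpd{C}{R}$ you quote from \cite{totushek} really is the two-condition ``in the Bass/Auslander class and finite ordinary dimension after transfer'' form you state; if the definition there is simply $\cpd{C}{R}Y_\b<\infty \iff \pd_R\RHom_R(C,Y_\b)<\infty$, then the Bass-class clause is redundant but harmless, and closure under triangles follows from $\RHom_R(C,-)$ being exact and $\thick_R(R)$ being thick, which makes your inductive step even shorter.
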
 

\begin{proof}
\begin{enumerate}
\item By \cite[Theorem 2.11(b)]{TakWhite}, it follows that $\id_R\H_i(X_\b)\otimes_RC<\infty$. By \cite[Lemma 2.5]{qpd}, there is the following spectral sequence
\[
\Tor_j^R(\H_i(X_\b),C)\Rightarrow\H_{i+j}(X_\b\lotimes_RC).
\]
Since, by \cite[Corollary 2.9(b)]{TakWhite}, the modules $\H_i(X_\b)$ are in $\A{C}(R)$, it follows that the spectral sequence above collapses yielding
\[
\H_i(X_\b)\otimes_RC\cong\H_{i}(X_\b\lotimes_RC).
\]
Hence $\id_R\H_i(X_\b\lotimes_RC)<\infty$. It follows from \cite[Lemma 4.1]{GheibiZargar} that $\id_RX_\b\lotimes_RC<\infty$, which implies, by \cite[Definition 3.1(iii)]{totushek}, that $\cid{C}{R}X_\b<\infty$.

\item From \cite[Corollary 2.10(a)]{TakWhite} it follows that $\cpd{C}{R}H_i(X_\b)<\infty$ implies $H_i(X_\b)\in \thick_{R}(C)$. Therefore \cite[3.10]{DGI} implies $X_\b\in \thick_{R}(C)$. Since $\RHom_R(C,C)\cong R$, we get $\RHom_R(C,X_\b)\in \thick_{R}(R)$, hence $\pd_R \RHom_R(C,X_\b)<\infty$. By \cite[Definition 3.1(i)]{totushek} we get $\cpd{C}{R}X_\b<\infty$.  \qedhere
\end{enumerate}
\end{proof}

\section{Transfer formulae}

In this section we define the quasi-projective and quasi-injective dimensions with respect to a semidualizing module and prove formulae relating them to the classic quasi-projective and quasi-injective dimensions.

\begin{definition}\label{def:cqpd} Let $C$ be a semidualizing $R$-module. An $R$-module $M$ is said to have \emph{finite $C$-quasi-projective dimension} if there exists a bounded complex $P_{\bullet}$ of projective $R$-modules such that $P_{\bullet}\otimes_R C$ is not acyclic and all the homologies are a finite direct sum of copies of $M$ (or zero). Such a complex $P_{\bullet}$ is said to be a $C$-quasi-projective resolution of $M$. The $C$-quasi-projective dimension of $M$ is defined as
\begin{align*}
    \cqpd{C}{R}M&=\inf\{\sup (P_{\bullet} \otimes_R C) - \hsup (P_{\bullet}\otimes_R C) \mid P_{\bullet} \text{ is a } C\text{-quasi-projective resolution of } M\}.
\end{align*}
The $C$-quasi-projective dimension of the zero module is set to be $-\infty$.

\end{definition}
\begin{lemma}\label{cquasilemma}
    Let $C$ be a semidualizing $R$-module and $X_\b$ be a homologically nontrivial bounded complex of flat $R$-modules. Then, $\sup X_\b=\sup(X_\b\otimes_R C)$ and  $\hsup X_\b=\hsup(X_\b\otimes_R C)$.
\end{lemma}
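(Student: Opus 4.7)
The strategy is to prove each equality by a pair of inequalities, starting with the $\sup$ equality, which is easier, and then handling $\hsup$ via derived duality. For $\sup$, only $\sup X_\b \leq \sup(X_\b \otimes_R C)$ requires argument: I need $X_i \otimes_R C \neq 0$ whenever the flat module $X_i$ is nonzero. The semidualizing property $R \cong \Hom_R(C,C)$ implies $C$ is faithful, so choosing generators of $C$ gives an injection $R \hookrightarrow C^k$; tensoring with the flat $X_i$ preserves injectivity and forces $X_i \otimes_R C \neq 0$.

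For $\hsup$ my key structural input is a quasi-isomorphism $X_\b \simeq \RHom_R(C, X_\b \otimes_R C)$ in $\D(R)$. I would establish it termwise: using a projective resolution of $C$ by finitely generated projectives and flatness of $X_i$, the standard exchange $\Hom_R(P, C) \otimes_R X_i \cong \Hom_R(P, C \otimes_R X_i)$ combined with the semidualizing conditions forces every flat module $X_i$ into the Auslander class $\A{C}(R)$. This yields $\Ext^{>0}_R(C, X_i \otimes_R C) = 0$ (making the termwise $\Hom_R(C, X_\b \otimes_R C)$ a model for $\RHom_R(C, X_\b \otimes_R C)$) and the canonical isomorphism $\Hom_R(C, X_i \otimes_R C) \cong X_i$ (making this termwise complex literally $X_\b$).

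Next I would apply the hyper-Ext spectral sequence
\[
E_2^{p,q} = \Ext^p_R(C, \H_q(X_\b \otimes_R C)) \Longrightarrow \H_{q-p}(X_\b).
\]
The inequality $\hsup X_\b \leq \hsup(X_\b \otimes_R C)$ is immediate, since contributions at total degree $n > \hsup(X_\b \otimes_R C)$ all involve $\H_{n+p}(X_\b \otimes_R C) = 0$. For the reverse, set $t = \hsup(X_\b \otimes_R C)$: the only nonzero $E_2^{p,q}$ with $q-p=t$ is $E_2^{0,t} = \Hom_R(C, \H_t(X_\b \otimes_R C))$, and all outgoing differentials target columns where $\H_{>t}(X_\b \otimes_R C) = 0$ while all incoming differentials come from columns with $p<0$. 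Hence $E_\infty^{0,t} = E_2^{0,t}$ and $\H_t(X_\b) \cong \Hom_R(C, \H_t(X_\b \otimes_R C))$.

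The main obstacle is to argue this Hom is nonzero without any finite-generation hypothesis on the homology. I would pick an associated prime $\p \in \Ass_R \H_t(X_\b \otimes_R C)$ to get an injection $R/\p \hookrightarrow \H_t(X_\b \otimes_R C)$, reducing the claim to $\Hom_R(C, R/\p) \cong \Hom_{R/\p}(C/\p C, R/\p) \neq 0$. The latter holds because $\Supp_R C = \Spec R$ forces $C_\p \neq 0$, so $C/\p C$ is a nonzero finitely generated $R/\p$-module whose localization at the generic point of $R/\p$ is a nonzero vector space; such a module admits a nonzero functional to $R/\p$.
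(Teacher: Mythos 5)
Your proof is correct, and it is genuinely different in execution from the paper's, although both hinge on the same two structural inputs: the quasi-isomorphism $X_\b \simeq \RHom_R(C, X_\b \otimes_R C)$ and the full-support condition $\Supp_R C = \Spec R$. The paper treats both points as black boxes: it cites derived tensor evaluation \cite[A.4.23]{gbook} for the quasi-isomorphism and then invokes abstract amplitude/support results \cite[A.4.6, A.8.7]{gbook} to pass from the nonvanishing of $\H(X_\b)$ to the nonvanishing of $\H_{\hsup X_\b}(X_\b\otimes_R C)$. You instead unpack everything: you prove termwise that each flat $X_i$ lies in $\A{C}(R)$ (so that the naive complex $\Hom_R(C, X_\b\otimes_R C)$ computes $\RHom_R(C, X_\b\otimes_R C)$ and equals $X_\b$ on the nose), run the hyper-$\Ext$ spectral sequence $\Ext^p_R(C,\H_q(X_\b\otimes_R C))\Rightarrow\H_{q-p}(X_\b)$, and finish the nonvanishing of $\Hom_R(C,\H_t(X_\b\otimes_R C))$ via an associated-prime reduction plus a generic-fiber argument over $R/\p$. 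For the $\sup$ equality the two proofs also diverge slightly: the paper uses the tensor evaluation isomorphism $\Hom_R(C,C\otimes_R Y)\cong Y$, whereas you use faithfulness of $C$ to embed $R \hookrightarrow C^k$ and exploit flatness. Your version is more self-contained and elementary (it essentially reproves the content of the cited results from \cite{gbook}), at the cost of being considerably longer; the paper's version is shorter but leans on external references. Both are sound.
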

\begin{proof} For the equality of sup, it is enough to observe that if $Y$ is a nonzero flat $R$-module then $Y\otimes_R C$ is also nonzero, which follows from the isomorphism $\Hom_R(C,C)\cong R$ and the tensor evaluation isomorphism 
\[
\Hom_R(C,C\otimes_R Y)\cong \Hom_R(C,C)\otimes_R Y\cong Y.
\]
For the claim on hsup, we first note that by definition of semidualizing module $\RHom_R(C,C)\cong R$ and $C\otimes_R X_\b\cong C\lotimes_R X_\b$. Moreover, the isomorphism $\RHom_R(C, C\otimes_R X_\b)\cong X_\b$ follows from \cite[A.4.23]{gbook}. Hence, $C\otimes_R X_\b$ is also homologically nontrivial. Since $\Supp_R(C)=\Spec(R)$, our claim now follows by \cite[A.4.6, A.8.7]{gbook}. 
\end{proof}
As a straightforward corollary of \Cref{cquasilemma}, we get the following
\begin{corollary}\label{cor:qpdP}
Let $C$ be a semidualizing $R$-module and $M$ an $R$-module, then
\begin{align*}
    \cqpd{C}{R}M&=\inf\{\sup P_{\bullet} - \hsup P_{\bullet} \mid P_{\bullet} \text{ is a } C\text{-quasi-projective resolution of } M\}.
\end{align*}
\end{corollary}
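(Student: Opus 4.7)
The plan is to deduce the statement as an immediate consequence of \Cref{cquasilemma}, applied termwise to each $C$-quasi-projective resolution of $M$. The only substantive point is to verify that the hypothesis of \Cref{cquasilemma} — namely, that the complex in question be homologically nontrivial — is satisfied by every $C$-quasi-projective resolution.

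First I would fix a $C$-quasi-projective resolution $P_\b$ of $M$ as in \Cref{def:cqpd}. By definition $P_\b$ is a bounded complex of projective (hence flat) $R$-modules, and $P_\b \otimes_R C$ is not acyclic. To invoke \Cref{cquasilemma} on $P_\b$, I must confirm that $P_\b$ itself has nontrivial homology. This is short: flatness of the terms of $P_\b$ gives $P_\b \otimes_R C \simeq P_\b \lotimes_R C$, so if $P_\b$ were acyclic then $P_\b \otimes_R C$ would be acyclic as well, contradicting the hypothesis. Equivalently, one may invoke the tensor evaluation isomorphism $\RHom_R(C,\, P_\b \otimes_R C) \simeq P_\b$ in $\D(R)$ (which uses $\RHom_R(C,C) \simeq R$) to see directly that the nontrivial homology of $P_\b \otimes_R C$ forces nontrivial homology of $P_\b$.

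With $P_\b$ now known to be homologically nontrivial, \Cref{cquasilemma} supplies the two equalities $\sup(P_\b \otimes_R C) = \sup P_\b$ and $\hsup(P_\b \otimes_R C) = \hsup P_\b$. Substituting these into the infimum defining $\cqpd{C}{R}M$ in \Cref{def:cqpd} converts it to the infimum claimed in the statement, finishing the proof.

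I do not anticipate a genuine obstacle: the computational content is already packaged into \Cref{cquasilemma}, and the only small verification — that a $C$-quasi-projective resolution is itself homologically nontrivial — is a one-line consequence of either the flatness of the $P_i$ or of the derived tensor evaluation identity used in the proof of \Cref{cquasilemma}.
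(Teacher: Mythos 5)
Your proposal is correct and takes the same route as the paper, which simply cites \Cref{cquasilemma} for the conclusion; your extra step — verifying that a $C$-quasi-projective resolution $P_\b$ is itself homologically nontrivial, either via flatness and $P_\b \otimes_R C \simeq P_\b \lotimes_R C$ or via $\RHom_R(C, P_\b \otimes_R C) \simeq P_\b$ — is exactly the small unstated check needed to apply that lemma, and both of your justifications are valid.
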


\begin{remark}\label{rmk:2ndDef}
Let $Q_\b$ be a complex of $C$-projective modules. Then 
\[
\Hom_R(C,Q_\bullet)\otimes_RC\cong Q_\bullet.
\]
Indeed, due to the naturality of the evaluation map, it suffices to check the isomorphism componentwise. The latter isomorphism follows since projective modules belong to the Auslander class of $C$.
This shows that $Q_\bullet$ is isomorphic to a complex of the form $P_\bullet\otimes_RC$ where $P_\bullet$ is a complex of projective modules. Therefore in the previous definition it is not restrictive to consider complexes of the form $P_\bullet\otimes_RC$ with $P_\bullet$ a complex of projective modules instead of considering all complexes of $C$-projective modules.
\end{remark}

%\textcolor{red}{define cqpd by not assuming that the C-proj res is induced by a complex of projectives. Prove that in the finitely generated case it is induced by a perfect complex}

\begin{remark}
It follows from \cite[Corollary 2.10(a)]{TakWhite} that $\cqpd{C}{R}M\leq\cpd{C}{R}M$. Moreover, by \cite[Corollary 2.9(a)]{TakWhite} if $\cpd{C}{R}M<\infty$, then $M\in\B{C}(R)$. Therefore, every module of finite $C$-projective dimension is a module of finite $C$-quasi projective dimension belonging to the Bass class. Since several results in this paper are about modules of finite $C$-quasi projective dimension in $\B{C}(R)$, this provides a class of examples satisfying the hypotheses of our results.
\end{remark}

\begin{example}
Several results in this paper require a module $M$ with $M\in\B{C}(R)$. In this example we provide a module $M$ over a Cohen-Macaulay ring with $M\in\B{C}(R), \cqpd{C}{R}M<\infty, \cpd{C}{R}M=\infty$ and $C\not\cong R$.

Let $\kk$ be a field, let $A=\kk[[t]]$ and $Q=A[x,y]/(x,y)^2$. Let $D=\Hom_A(Q,A)$, then $D$ is a semidualizing $Q$-module such that $D\not\cong Q$, see for example \cite[Example 2.3.1]{KeriSD}.
Let $R=Q/t^2Q, M=D/tD$ and $C=D/t^2D$.

\begin{itemize}
\item We first prove that $C\not\cong R$. Indeed, we first notice that $\pd_QD=\infty$ since a semidualizing module of finite projective dimension must be isomorphic to the ring itself. It follows from \cite[Lemma 1.3.5]{BrunsHerzog} that $\pd_R C=\pd_{Q/t^2Q} D/t^2D=\infty$, therefore $C\not\cong R$.

\item We now show that $M\in\B{C}(R)$. Since $\pd_QQ/tQ<\infty$, it follows from \cite[Theorem 2.11(a)]{TakWhite} that $\cpd{D}{Q}M<\infty$. Thus, by \cite[Theorem 2.9(1)]{TakWhite}, the module $M$ is in $\B{D}(Q)$. We wish to apply \cite[Proposition 3.4.6(b)]{KeriSD} to deduce that $M\in\B{C}(R)$. In order to do so we need $R\in\A{D}(Q)$. This follows from \cite[Proposition 3.1.9]{KeriSD} since $R=Q/t^2Q$ and $t^2$ is $Q$-regular.

\item Next we show that $\cpd{C}{R}M=\infty$. Since $(0:_Rt)=tR$, one can explicitly write the minimal $R$-free resolution of $R/tR$ yielding $\pd_RR/tR=\infty$. By \cite[Theorem 2.11(a)]{TakWhite}, one has that $\cpd{C}{R}C/tC=\infty$. Now it remains to observe that $C/tC\cong M$ as $R$-modules.

\item Finally, we prove that $\cqpd{C}{R}M<\infty$. This follows from the following chain of (in)equalities
\begin{align*}
\cqpd{C}{R}M&=\cqpd{C}{R}C/tC\\
&\leq\cpd{D}{Q}C/tC\\
&=\cpd{D}{Q}D/tD\\
&=\pd_QQ/tQ\\
&<\infty,
\end{align*}
where the first inequality follows from \Cref{deform}, the second equality is true since $C/tC\cong D/tD$ as $Q$-modules, the last equality is an application of \cite[Theorem 2.11(a)]{TakWhite}, and the last inequality holds by the $Q$-regularity of $t$.
\end{itemize}
\end{example}
Theorem \ref{thm:cqpd=qpd} is a quasi-projective version of \cite[Theorem 2.11(c)]{TakWhite}. We first prove a preliminary lemma.

\begin{lemma}\label{lem:ExtSpectral}
Let $P_\bullet$ be a bounded complex of projective modules, then there is the following convergent spectral sequence 
\[
\Ext_R^p(C,\H_q(P_\bullet\otimes_R C))\Rightarrow\H_{q-p}(P_\bullet).
\] 
\end{lemma}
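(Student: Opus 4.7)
The plan is to deduce the desired spectral sequence from the standard hyperext spectral sequence applied to the complex $Y_\bullet := P_\bullet \otimes_R C$, after identifying the abutment with $\H_\ast(P_\bullet)$.

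\textbf{Step 1 (abutment).} I would first establish the quasi-isomorphism
\[
\RHom_R(C,\, P_\bullet \otimes_R C) \;\simeq\; P_\bullet \quad \text{in } \D(R).
\]
Since each $P_i$ is projective, it is flat and lies in $\A{C}(R)$, so the tensor-evaluation map $P_i \to \Hom_R(C, C \otimes_R P_i)$ is an isomorphism and $\Ext_R^{>0}(C, C \otimes_R P_i)=0$. Promoting this pointwise fact to the bounded complex $P_\bullet$ proceeds exactly as in the proof of \Cref{cquasilemma}: using $P_\bullet \otimes_R C \simeq P_\bullet \lotimes_R C$, tensor-evaluation \cite[A.4.23]{gbook}, and $\RHom_R(C,C) \simeq R$ gives the displayed isomorphism.

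\textbf{Step 2 (spectral sequence).} Next I would invoke the classical spectral sequence of a Hom double complex. Choose a projective resolution $F_\bullet \to C$ and form the double complex $\Hom_R(F_\bullet,\, Y_\bullet)$. Filtering in the $F$-direction and first taking homology in the $Y$-direction gives $E_1^{p,q} = \Hom_R\bigl(F_p,\, \H_q(Y_\bullet)\bigr)$ (using projectivity of each $F_p$), and hence
\[
E_2^{p,q} = \Ext_R^p\bigl(C,\, \H_q(Y_\bullet)\bigr) \;\Longrightarrow\; \H_{q-p}\bigl(\RHom_R(C, Y_\bullet)\bigr).
\]
Convergence holds because $Y_\bullet = P_\bullet \otimes_R C$ is bounded, so the filtration is finite in each total degree. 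Substituting $Y_\bullet = P_\bullet \otimes_R C$ and using the identification from Step 1 rewrites the abutment as $\H_{q-p}(P_\bullet)$, yielding the statement.

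\textbf{Main obstacle.} The only nontrivial step is the derived-category identification $\RHom_R(C,\, P_\bullet \otimes_R C) \simeq P_\bullet$: this is where it matters that $P_\bullet$ is a bounded complex of flat modules, so that tensor-evaluation applies and the higher Ext's vanish on each level. Everything else is a direct application of the standard spectral sequence of a Hom double complex, so no further difficulty is expected.
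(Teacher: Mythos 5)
Your proof is correct and follows essentially the same route as the paper's: form the double complex $\Hom_R(F_\bullet, P_\bullet\otimes_R C)$ with $F_\bullet$ a projective resolution of $C$, read off the $E_2$-page as $\Ext_R^p(C,\H_q(P_\bullet\otimes_R C))$, and identify the abutment with $\H_{q-p}(P_\bullet)$ via $\RHom_R(C,P_\bullet\otimes_R C)\simeq\RHom_R(C,C)\lotimes_R P_\bullet\simeq P_\bullet$ using tensor-evaluation \cite[A.4.23]{gbook}. The only cosmetic difference is that you motivate the abutment identification by the pointwise Auslander-class membership of the flat modules $P_i$ before passing to the derived-category argument, whereas the paper goes directly to the derived-category identities.
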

\begin{proof}
Let $F_\bullet$ be a free resolution of $C$. Then the double complex $\Hom_R(F_\bullet,P_\bullet\otimes_R C)$ induces a second quadrant spectral sequence $\Ext_R^p(C,\H_q(P_\bullet\otimes_R C))\Rightarrow\H_{q-p}(\Hom_R(F_\bullet,P_\bullet\otimes_RC))$. It remains to show that $\Hom_R(F,P_\bullet\otimes_RC)$ is quasi-isomorphic to $P_\bullet$. One has the following isomorphisms 
\begin{align*}
\Hom_R(F,P_\bullet\otimes_RC) &\cong \RHom_R(C,P_\bullet\otimes_RC)\\
&\cong \RHom_R(C,P_\bullet\overset{L}\otimes_RC)\\
&\cong \RHom_R(C,C)\overset{L}\otimes_RP_\bullet\\
&\cong P_\bullet,
\end{align*} where the second isomorphism exists as $P_\bullet$ is a perfect complex, and the third isomorphism is by \cite[A.4.23]{gbook}.
\end{proof} 
% \textcolor{blue}{
% \begin{corollary}
%     Let $P_\bullet$ be a perfect complex. Then one has $\sup(P_\bullet)=\sup(P_\bullet\otimes_RC)$ and $\hsup(P_\bullet)=\hsup(P_\bullet\otimes_R C)$.
% \end{corollary}
% \begin{proof}
%     The first equality is obvious. We show $\hsup(P_\bullet)=\hsup(P_\bullet\otimes_R C)$. Set $h=\hsup(P_\bullet)$ and $k=\hsup (P_\bullet\otimes_R C)$.
%     Consider the spectral sequence in Lemma \ref{lem:ExtSpectral}. The maps on $\E_2$ page are of bidegree $(2,1)$. Thus $\E^{0,k}_2\cong \E^{0,k}_\infty$. Since $\E^{0,h}_2 \cong \Hom_R(C,\H_k(P_\bullet\otimes_RC))\neq 0$, it is a nonzero subqoutient of $\H_k(P_\bullet)$. Therefore $\H_k(P_\bullet)\neq 0$ and so $k\leq h$. Next, consider the truncated complex 
%     $$P'_\bullet\otimes_RC=(0\to P_n\otimes_R C\to \dots \to P_k\otimes_RC \to 0).$$ This complex is exact except at $k$. Then the isomorphism of complexes $\Hom_R(C,P'_\bullet \otimes_R C)\cong P'_\bullet$ shows that $P'_\bullet$ is exact except possibly at $k$. Therefore $h\leq k$.
% \end{proof} }

\begin{theorem}\label{thm:cqpd=qpd} 
Let $M\in \B{C}(R)$, then
\[
\cqpd{C}{R}M<\infty\iff \qpd{R}\Hom_R(C,M)<\infty.
\]
Moreover 
\begin{enumerate}
\item $\cqpd{C}{R}M=\qpd{R}\Hom_R(C,M)$.
\item $P_\b$ is a $C$-quasi-projective resolution of $M$ if and only if it is a quasi-projective resolution of $\Hom_R(C,M)$.
\end{enumerate}
\end{theorem}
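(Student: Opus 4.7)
The plan is to establish part (2) first and then deduce part (1) together with the finiteness equivalence. The key setup is to put $N = \Hom_R(C,M)$ and extract from $M \in \B{C}(R)$ the three facts $C \otimes_R N \cong M$, $\Tor_{>0}^R(C,N) = 0$, and $\Ext_R^{>0}(C,M) = 0$. Rephrased, (2) becomes the following claim about a bounded complex $P_\bullet$ of projectives: the conditions $\H_i(P_\bullet) \cong N^{\oplus a_i}$ for all $i$ and $\H_i(P_\bullet \otimes_R C) \cong M^{\oplus a_i}$ for all $i$ are equivalent, and with the \emph{same} multiplicities $a_i$.

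For the forward direction of (2) --- passing from a quasi-projective resolution of $N$ to a $C$-quasi-projective resolution of $M$ --- I would invoke the hyperhomology spectral sequence $\Tor_j^R(\H_i(P_\bullet), C) \Rightarrow \H_{i+j}(P_\bullet \otimes_R C)$, the same one used in \cite[Lemma 2.5]{qpd} and in the proof of \Cref{lem:thick}. Using $\H_i(P_\bullet) \cong N^{\oplus a_i}$ together with $\Tor^R_{>0}(N,C) = 0$, the sequence collapses to the zeroth column, giving $\H_i(P_\bullet \otimes_R C) \cong N^{\oplus a_i} \otimes_R C \cong M^{\oplus a_i}$, as wanted.

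For the reverse direction of (2), I would apply exactly the spectral sequence provided by \Cref{lem:ExtSpectral}, namely $\Ext_R^p(C, \H_q(P_\bullet \otimes_R C)) \Rightarrow \H_{q-p}(P_\bullet)$. Since $\H_q(P_\bullet \otimes_R C) \cong M^{\oplus a_q}$ and $\Ext_R^{>0}(C,M) = 0$, the spectral sequence collapses, yielding $\H_q(P_\bullet) \cong \Hom_R(C, M^{\oplus a_q}) \cong N^{\oplus a_q}$. Note that since $M \neq 0$ iff $N \neq 0$ (via $C \otimes_R N \cong M$), the indices at which the multiplicities $a_i$ are nonzero are the same under the two descriptions.

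Part (1) will then follow formally: by (2) the infima defining $\cqpd{C}{R}M$ and $\qpd{R}N$ are taken over the same class of bounded complexes of projectives, and by \Cref{cor:qpdP} the quantity $\sup(P_\bullet \otimes_R C) - \hsup(P_\bullet \otimes_R C)$ agrees with $\sup P_\bullet - \hsup P_\bullet$, so the two infima are literally the same; the finiteness equivalence is then immediate. The main technical point to watch is the bookkeeping of the multiplicities and the interaction of the two spectral sequences with finite direct sums; both are routine but must be stated carefully to make the identification $H_i(P_\bullet \otimes_R C) \cong M^{\oplus a_i} \Longleftrightarrow H_i(P_\bullet) \cong N^{\oplus a_i}$ truly an equivalence rather than only a two-way implication of \emph{existence} of such a presentation.
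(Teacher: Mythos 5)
Your proposal is correct and follows essentially the same route as the paper's proof: the forward direction uses the $\Tor$-hyperhomology spectral sequence (collapsing via $\Tor_{>0}^R(C,\Hom_R(C,M))=0$), the reverse uses the $\Ext$ spectral sequence of \Cref{lem:ExtSpectral} (collapsing via $\Ext_R^{>0}(C,M)=0$), and part (1) is deduced by \Cref{cor:qpdP}. The only cosmetic difference is that you frame the argument as first establishing (2) and then deducing (1), while the paper proves the two implications and then observes (2) as a byproduct; the mathematics is identical.
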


\begin{proof}
We first assume that $\qpd{R}\Hom_R(C,M)<\infty$. In which case there is a bounded complex of projective modules $P_\bullet$ such that $\H_i(P_\bullet)\cong\Hom_R(C,M)^{\oplus a_i}$, for some integer $a_i$. Since $M$ is in the Bass class, it follows that $\Tor_{>0}^R(C,\Hom_R(C,M))=0$ and $C\otimes_R\Hom_R(C,M)\cong M$. Therefore $P_\bullet$ is a $C$-quasi projective resolution of $M$. Indeed, the following spectral sequence, whose existence was proved in \cite[Lemma 2.5]{qpd}, collapses
\[
\Tor_p^R(C,\H_q(P_\bullet))\Rightarrow\H_{p+q}(C\otimes_R P_\bullet),
\]
yielding the first isomorphism below
\[
\H_i(C\otimes_RP_\bullet)\cong C\otimes_R \H_i(P_\bullet)\cong C\otimes_R\Hom_R(C,M)^{\oplus a_i}\cong(C\otimes_R\Hom_R(C,M))^{\oplus a_i}\cong M^{\oplus a_i}.
\]

For the converse, let $P_\bullet$ be a $C$-quasi projective resolution of $M$. By \Cref{lem:ExtSpectral} one has the following spectral sequence 
\[
\Ext_R^p(C,\H_q(P_\bullet\otimes_R C))\Rightarrow\H_{q-p}(P_\bullet).
\]
Since $\H_q(P_\bullet\otimes_RC)$ is either zero, or a direct sum of copies of $M$, and since $M\in\B{C}(R)$, it follows that this spectral sequence collapses on the row $p=0$. Therefore the following isomorphism holds for all $i\geq0$
\[
\Hom_R(C,\H_i(P_\bullet\otimes_RC))\cong\H_i(P_\bullet).
\]
%\textcolor{red}{Since $P_\bullet$ is a complex of flat modules and $C$ is semidualizing, the right-hand side of the above display is isomorphic to $\H_i(P_\bullet)$, see for example \cite[Theorem 4.5.10]{LarsBook}.} 
Since $\H_i(P_\bullet\otimes_RC)$ is either zero or isomorphic to a direct sum of copies of $M$, it follows from the display above that $P_\bullet$ is a quasi-projective resolution of $\Hom_R(C,M)$. 

To prove the equality stated in the Theorem we can assume that both quantities are finite. We proved above that $P_\bullet$ is a $C$-quasi-projective resolution of $M$ if and only if it is a quasi-projective resolution of $\Hom_R(C,M)$. Now it suffices to invoke \Cref{cor:qpdP}.
\end{proof}

\begin{remark}\label{rmk:cqpdk} Let $(R,\m,\kk)$ be local, and $K_\b^R$ be the Koszul complex on a generating set of $\m$. Then, $K_\b^R$ is a bounded $C$-quasi-projective resolution of $\kk$, hence $\cqpd{C}{R} \kk\leq\mathrm{edim}\;R$, but $\kk\notin \B{C}(R)$ unless $C\cong R$.

If $M$ is a nonzero $R$-module with a periodic $C$-projective resolution, then, as in the proof of \cite[Proposition 3.6(2)]{qpd}, it follows that $\cqpd{C}{R}M=0$.
\end{remark} 

\begin{example} In \cite[Corollary 2.9(a)]{TakWhite} it is proved that a module of finite $C$-projective dimension must belong to $\B{C}(R)$. In this example we show that a module of finite $C$-quasi-projective dimension does not necessarily belong to $\B{C}(R)$.

Let $(R,\m,\kk)$ be a local artinian ring that is not Gorenstein and let $E$ be the injective hull of $\kk$. Let $M$ be the module defined by the following short exact sequence
\[
0\rightarrow \kk\rightarrow E\rightarrow M\rightarrow0.
\]
By \cite[Proposition 2.8(1)]{qid}, the residue field $\kk$ has finite quasi injective dimension. It follows from \cite[Proposition 2.4(3)]{qid} that $M$ has finite quasi injective dimension. Therefore there is a bounded complex of injective modules $I_\bullet$ whose homology modules are isomorphic to direct sums of $M$. Since an injective module is a direct sum of copies of $E$, it follows that $I_\bullet=\Hom_R(E,I_\bullet)\otimes_RE$ where $\Hom_R(E,I_\b)$ is a bounded complex of projective modules, see for example \cite[Theorem 10.3.8]{LarsBook}. Therefore $\cqpd{E}{R}M<\infty$. Since $E\in\B{E}(R)$, it follows that $M\not\in\B{E}(R)$, otherwise $\kk$ would be in $\B{E}(R)$ and this would imply that $E\cong R$.
\end{example}

\begin{theorem}\label{auslanderclass}
Let $M\in \A{C}(R)$, then
\[
\qpd{R}M<\infty\iff \cqpd{C}{R}(C\otimes_RM)<\infty.
\]
Moreover 
\begin{enumerate}
\item $\qpd{R}M=\cqpd{C}{R}(C\otimes_RM)$.
\item $P_\b$ is a quasi-projective resolution of $M$ if and only if it is a $C$-quasi-projective resolution of $C\otimes_RM$.
\end{enumerate}
\end{theorem}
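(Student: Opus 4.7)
The plan is to mirror the proof of \Cref{thm:cqpd=qpd}, swapping the roles of the Bass and Auslander classes. The two key tools are the spectral sequence from \cite[Lemma 2.5]{qpd}
\[
\Tor_p^R(C,\H_q(P_\bullet))\Rightarrow\H_{p+q}(C\otimes_R P_\bullet)
\]
for the forward direction, and the spectral sequence from \Cref{lem:ExtSpectral}
\[
\Ext_R^p(C,\H_q(P_\bullet\otimes_R C))\Rightarrow\H_{q-p}(P_\bullet)
\]
for the reverse direction. In the previous theorem the membership $M\in\B{C}(R)$ was responsible for collapsing these spectral sequences; here the two vanishings $\Tor^R_{>0}(C,M)=0$ and $\Ext_R^{>0}(C,C\otimes_R M)=0$ coming from $M\in\A{C}(R)$ will play that role.

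For the forward direction, suppose $P_\bullet$ is a quasi-projective resolution of $M$ with $\H_i(P_\bullet)\cong M^{\oplus a_i}$. Since $M\in\A{C}(R)$ one has $\Tor^R_{>0}(C,M)=0$, so the first spectral sequence collapses onto its $p=0$ row and yields
\[
\H_i(P_\bullet\otimes_R C)\cong C\otimes_R\H_i(P_\bullet)\cong (C\otimes_R M)^{\oplus a_i},
\]
which exhibits $P_\bullet$ as a $C$-quasi-projective resolution of $C\otimes_R M$. For the converse, let $P_\bullet$ be a $C$-quasi-projective resolution of $C\otimes_R M$ with $\H_i(P_\bullet\otimes_R C)\cong (C\otimes_R M)^{\oplus a_i}$. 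Because $\Ext_R^{>0}(C,C\otimes_R M)=0$, the second spectral sequence collapses onto its $p=0$ column, giving
\[
\H_i(P_\bullet)\cong\Hom_R(C,(C\otimes_R M)^{\oplus a_i})\cong\Hom_R(C,C\otimes_R M)^{\oplus a_i}\cong M^{\oplus a_i},
\]
where the last isomorphism is the defining isomorphism $M\cong\Hom_R(C,C\otimes_R M)$ for modules in $\A{C}(R)$. This establishes (2) together with the biconditional on finiteness.

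To finish (1), I would invoke \Cref{cor:qpdP}. Part (2) shows that the set of bounded quasi-projective resolutions of $M$ coincides, as a set of bounded complexes of projective modules, with the set of bounded $C$-quasi-projective resolutions of $C\otimes_R M$; hence the infima defining $\qpd{R}M$ and $\cqpd{C}{R}(C\otimes_R M)$ (via \Cref{cor:qpdP}) are taken over the same collection with the same numerical value $\sup P_\bullet-\hsup P_\bullet$, and the equality follows. I do not expect any serious obstacle here: the argument is formally dual to that of \Cref{thm:cqpd=qpd}, with the defining properties of the Auslander class (vanishing of $\Tor^R_{>0}(C,-)$ and of $\Ext_R^{>0}(C,C\otimes_R-)$, together with the unit isomorphism $M\to\Hom_R(C,C\otimes_R M)$) replacing those of the Bass class used there.
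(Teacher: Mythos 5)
Your proof is correct and follows exactly the same route as the paper: the forward direction uses the Tor spectral sequence from \cite[Lemma 2.5]{qpd} collapsing via $\Tor^R_{>0}(C,M)=0$, the reverse uses \Cref{lem:ExtSpectral} collapsing via $\Ext^{>0}_R(C,C\otimes_R M)=0$ together with the unit isomorphism $M\cong\Hom_R(C,C\otimes_R M)$, and part (1) is then deduced by comparing the two infima over the now-identical sets of resolutions via \Cref{cor:qpdP}. The paper's proof is a verbatim instance of this outline.
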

\begin{proof}
We first assume that $\qpd{R}M<\infty$. Let $P_\bullet$ be a quasi-projective resolution of $M$. We claim that $P_\bullet$ is a $C$-quasi-projective resolution of $M\otimes_RC$. Indeed, since $\H_q(P_\bullet)$ is a finite direct sum of copies of $M$, and $M$ is in the Auslander class, the following spectral sequence collapses
\[
\Tor_p^R(C,\H_q(P_\bullet))\Rightarrow\H_{p+q}(C\otimes_RP_\bullet),
\]
yielding $\H_q(C\otimes_RP_\bullet)\cong C\otimes_R\H_q(P_\bullet)$. Since $\H_q(P_\bullet)$ is a finite direct sum of copies of $M$ (or zero), it follows that $\H_q(C\otimes_RP_\bullet)$ is a finite direct sum of copies of $C\otimes_RM$ (or zero).

For the converse assume that $\cqpd{C}{R}(C\otimes_RM)<\infty$. Let $P_\bullet$ be a $C$-quasi-projective resolution of $C\otimes_RM$. We claim that $P_\bullet$ is a quasi-projective resolution of $M$. Indeed, since $\H_q(C\otimes_RP_\bullet)$ is a finite direct sum of copies of $C\otimes_RM$, and $M\in\A{C}(R)$, it follows that the following spectral sequence from \Cref{lem:ExtSpectral} collapses
\[
\Ext_R^p(C,\H_q(P_\bullet\otimes_RC))\Rightarrow\H_{q-p}(P_\b),
\]
yielding the isomorphism 
\[
\Hom_R(C,\H_q(P_\bullet\otimes_RC))\cong\H_q(P_\b).
\]
It follows that $P_\bullet$ is a quasi-projective resolution of $\Hom_R(C,C\otimes_RM)$, which is isomorphic to $M$ since $M\in\A{C}(R)$. The proof of \emph{(1)} is similar to the proof of \Cref{thm:cqpd=qpd}\emph{(1)}.
\end{proof}

\begin{definition}\label{def:cqid}
Let $C$ be a semidualizing $R$-module. An $R$-module $M$ is said to have finite $C$-quasi-injective dimnesion if there exists a bounded complex $I_\b$ of injective $R$-modules such that $\Hom_R(C,I_\b)$ is not acyclic and all the homologies are finite direct sum of copies of $M$ (or zero). Such a complex $I_\b$ is said to be a \emph{$C$-quasi-injective resolution} of $M$. The \emph{$C$-quasi-injective dimension} of $M$ is defined as 
\begin{align*}
    \cqid{C}{R}M&=\inf\{\hinf (\Hom_R(C,I_\b)) - \inf (\Hom_R(C,I_\b)) \mid I_\b \text{ is a } C\text{-quasi-injective resolution of }M\},
\end{align*}
and $\cqid{C}{R}M=-\infty$ if $M=0$. 
\end{definition}

\begin{lemma}\label{cqinj} Let $C$ be a semidualizing $R$-module, and $X_\b$ be a homologically nontrivial bounded complex of injective $R$-modules. Then, $\inf X_\b=\inf \Hom_R(C,X_\b)$ and $\hinf X_\b=\hinf \Hom_R(C,X_\b)$.
\end{lemma}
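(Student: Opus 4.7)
The proof I envision mirrors the proof of \Cref{cquasilemma}, with injective modules and the Bass class playing the roles of flat modules and the Auslander class respectively.

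For the equality $\inf X_\b=\inf\Hom_R(C,X_\b)$, I would observe that $\Hom_R(C,X_\b)$ carries $\Hom_R(C,X_i)$ in degree $i$, so the inequality $\inf X_\b\le\inf\Hom_R(C,X_\b)$ is immediate. For the reverse inequality it suffices to show that $\Hom_R(C,I)\ne 0$ for every nonzero injective $R$-module $I$, which is the dual of the observation about flat modules used in \Cref{cquasilemma}: since every injective module lies in $\B{C}(R)$, the evaluation morphism yields an isomorphism $C\otimes_R\Hom_R(C,I)\cong I\ne 0$, and this forces $\Hom_R(C,I)\ne 0$.

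For the equality $\hinf X_\b=\hinf\Hom_R(C,X_\b)$, my plan is first to establish a derived isomorphism $X_\b\simeq C\lotimes_R\Hom_R(C,X_\b)$ in $\D(R)$ and then to invoke the infimum analogue of the support-theoretic formula used in \Cref{cquasilemma}. Since $X_\b$ is a bounded complex of injective modules it is $K$-injective, so $\Hom_R(C,X_\b)\simeq\RHom_R(C,X_\b)$ in $\D(R)$. Because each $X_i$ is injective and hence lies in $\B{C}(R)$, one has $\Tor^R_{>0}(C,\Hom_R(C,X_i))=0$, which implies that the ordinary tensor product $C\otimes_R\Hom_R(C,X_\b)$ represents the derived tensor product $C\lotimes_R\Hom_R(C,X_\b)$. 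Applying the Bass-class evaluation isomorphism componentwise then gives an isomorphism of complexes $C\otimes_R\Hom_R(C,X_\b)\cong X_\b$, yielding the desired derived isomorphism.

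With $X_\b\simeq C\lotimes_R\Hom_R(C,X_\b)$ in hand, the equality $\hinf X_\b=\hinf\Hom_R(C,X_\b)$ reduces to the assertion that $\hinf(C\lotimes_R Y)=\hinf Y$ for any $Y\in\D(R)$, which in turn follows from $\hinf C=0$ and $\Supp_R C=\Spec R$ via the infimum analogues of \cite[A.4.6, A.8.7]{gbook}, i.e.\ the same references invoked in the proof of \Cref{cquasilemma} for the supremum. The main obstacle I anticipate is precisely this last appeal: whereas the behaviour of $\hsup$ under $\lotimes$ against a full-support finitely generated module was cited directly in \Cref{cquasilemma}, verifying the corresponding statement for $\hinf$ requires unpacking the dual material in Christensen's book (alternatively, it can be argued via a first-quadrant spectral sequence $\Tor_p^R(C,\H_q(\Hom_R(C,X_\b)))\Rightarrow\H_{p+q}(X_\b)$ whose edge term at the lowest surviving row is $C\otimes_R\H_{\hinf\Hom_R(C,X_\b)}(\Hom_R(C,X_\b))$, which is nonzero by the full support of $C$ combined with the Auslander-class membership of the homology modules of $\Hom_R(C,X_\b)$). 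Once this support formula is established, the proof is the complete dual of that of \Cref{cquasilemma}.
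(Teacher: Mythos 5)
Your proposal is correct and follows essentially the same route as the paper: the equality of infima comes from nonvanishing of $\Hom_R(C,I)$ for a nonzero injective $I$ via the evaluation isomorphism $C\otimes_R\Hom_R(C,I)\cong I$, and the equality of homological infima comes from the derived isomorphism $X_\b\simeq C\lotimes_R\Hom_R(C,X_\b)$ followed by an $\hinf$/support formula for $\lotimes$ against the full-support module $C$ (the paper cites \cite[A.4.24]{gbook} for the isomorphism you rederive componentwise, and \cite[A.4.15, A.8.8]{gbook} rather than your guessed \cite[A.4.6, A.8.7]{gbook} for the formula, but the content is exactly what you describe). One caution about your parenthetical fallback: the homology modules of $\Hom_R(C,X_\b)$ need not lie in $\A{C}(R)$ --- only its \emph{components} $\Hom_R(C,X_i)$ automatically do, and $\A{C}(R)$ is not closed under arbitrary subquotients --- so that alternative spectral-sequence route has a gap as written, though your main line of argument does not rely on it.
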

\begin{proof}For the equality of inf, it is enough to observe that if $Y$ is a nonzero injective $R$-module, then $\Hom_R(C,Y)$ is also nonzero, which follows from the isomorphism $\Hom_R(C,C)\cong R$ and Hom evaluation isomorphism 
\[
C\otimes_R \Hom_R(C,Y)\cong \Hom_R(\Hom_R(C,C),Y)\cong Y.
\]
For the claim on hinf, we first note that $\RHom_R(C,C)\cong R$ and $\Hom_R(C,X_\b)\cong \RHom_R(C,X_\b)$. Moreover, the isomorphism $C\lotimes_R\Hom_R(C,X_\b)\cong X_\b$ follows from \cite[A.4.24]{gbook}. Hence, $\Hom_R(C,X_\b) $ is also homologically nontrivial. Since $\operatorname{Supp}(C)=\Spec(R)$, we are now done by \cite[A.4.15, A.8.8]{gbook}.
\end{proof}

As a straightforward corollary of \Cref{cqinj}, we get the following
\begin{corollary}\label{cor:qidP}
Let $C$ be a semidualizing $R$-module and $M$ an $R$-module, then
\[
\cqid{C}{R}M =\inf\{\hinf I_\b-\inf I_\b\mid I_\b\text{ is a }C\text{-quasi-injective resolution of }M\}.
\]
\end{corollary}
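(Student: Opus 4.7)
The plan is to apply \Cref{cqinj} termwise to the set defining $\cqid{C}{R}M$. By \Cref{def:cqid}, one has
\[
\cqid{C}{R}M =\inf\{\hinf \Hom_R(C,I_\b) - \inf \Hom_R(C,I_\b)\mid I_\b\text{ is a }C\text{-quasi-injective resolution of }M\},
\]
so it suffices to establish, for every $C$-quasi-injective resolution $I_\b$ of $M$, the two equalities $\hinf I_\b = \hinf \Hom_R(C,I_\b)$ and $\inf I_\b = \inf \Hom_R(C,I_\b)$; then passing to the infimum on both sides gives the claim.

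If $M=0$, both sides equal $-\infty$ by convention, so I may assume $M\neq 0$ and fix a $C$-quasi-injective resolution $I_\b$ of $M$. By \Cref{def:cqid}, $\Hom_R(C,I_\b)$ is not acyclic, i.e.\ homologically nontrivial. To invoke \Cref{cqinj}, I need $I_\b$ itself to be homologically nontrivial; this is the only substantive point in the argument. To verify it, I would use the derived-category isomorphism $C\lotimes_R \Hom_R(C,I_\b)\cong I_\b$ valid for bounded complexes of injectives by \cite[A.4.24]{gbook}, together with $\Supp_R(C)=\Spec(R)$ and \cite[A.4.15, A.8.8]{gbook}, to propagate the nontriviality from $\Hom_R(C,I_\b)$ back to $I_\b$; this is exactly the argument already executed in the proof of \Cref{cqinj}.

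With $I_\b$ known to be homologically nontrivial, \Cref{cqinj} applies directly and yields both $\hinf I_\b=\hinf \Hom_R(C,I_\b)$ and $\inf I_\b=\inf \Hom_R(C,I_\b)$, hence
\[
\hinf I_\b - \inf I_\b = \hinf \Hom_R(C,I_\b) - \inf \Hom_R(C,I_\b).
\]
Taking the infimum over all $C$-quasi-injective resolutions of $M$ gives the stated formula.
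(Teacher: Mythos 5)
Your proof is correct and follows the route the paper intends: plug \Cref{cqinj} into \Cref{def:cqid}, after first observing that the complex $I_\b$ is homologically nontrivial because $\Hom_R(C,I_\b)$ is. The only remark I would make is that the nontriviality of $I_\b$ admits a shorter justification than re-running the support argument from \Cref{cqinj}: if $I_\b$ were acyclic, then as a bounded complex of injectives it is contractible (or equivalently $I_\b\simeq 0$ in $\D(R)$), so $\Hom_R(C,I_\b)\simeq\RHom_R(C,I_\b)\simeq 0$, contradicting the non-acyclicity required in \Cref{def:cqid}.
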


The next Lemma is proved similarly to \Cref{lem:ExtSpectral}, but instead of using  \cite[A.4.23]{gbook}, one needs to use \cite[A.4.24]{gbook}.
\begin{lemma}\label{lem:ExtSpectralInj}
Let $I_\bullet$ be a bounded complex of injective modules, then there is the following convergent spectral sequence 
\[
\Tor_p^R(C,\H_q(\Hom_R(C,I_\b)))\Rightarrow\H_{p+q}(I_\bullet).
\] 
\end{lemma}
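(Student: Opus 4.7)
The plan is to mirror the proof of \Cref{lem:ExtSpectral} with tensor and Hom swapped, so that a free resolution $F_\bullet$ of $C$ tensored with $\Hom_R(C, I_\bullet)$ plays the role that $\Hom_R(F_\bullet, P_\bullet \otimes_R C)$ played before. First I would take a free resolution $F_\bullet \to C$ and form the first-quadrant double complex $F_\bullet \otimes_R \Hom_R(C, I_\bullet)$. Filtering this double complex by rows (or equivalently using the standard spectral sequence associated to a double complex in which one differential is applied first) gives a convergent spectral sequence with $E^2$-page
\[
E^2_{p,q} = \Tor_p^R\bigl(C, \H_q(\Hom_R(C, I_\bullet))\bigr) \Rightarrow \H_{p+q}\bigl(F_\bullet \otimes_R \Hom_R(C, I_\bullet)\bigr).
\]
Convergence is standard since $I_\bullet$ is bounded (so $\Hom_R(C, I_\bullet)$ is bounded) and $F_\bullet$ is bounded below, making only finitely many terms contribute to each total degree.

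The substantive step is to identify the abutment with $\H_{p+q}(I_\bullet)$. For this I would argue in the derived category via the chain of quasi-isomorphisms
\begin{align*}
F_\bullet \otimes_R \Hom_R(C, I_\bullet) &\simeq C \lotimes_R \Hom_R(C, I_\bullet) \\
&\simeq C \lotimes_R \RHom_R(C, I_\bullet) \\
&\simeq \RHom_R\bigl(\RHom_R(C, C), I_\bullet\bigr) \\
&\simeq \RHom_R(R, I_\bullet) \\
&\simeq I_\bullet.
\end{align*}
The first isomorphism uses that $F_\bullet$ is a flat (in fact free) resolution of $C$. The second uses that $I_\bullet$ is a bounded complex of injectives, so $\Hom_R(C, I_\bullet)$ represents $\RHom_R(C, I_\bullet)$. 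The third is the Hom-evaluation isomorphism \cite[A.4.24]{gbook} (which is precisely the dual of the tensor-evaluation \cite[A.4.23]{gbook} employed in the proof of \Cref{lem:ExtSpectral}); the hypotheses required (finite generation of $C$, boundedness of $I_\bullet$) are satisfied. The fourth uses the semidualizing property $\RHom_R(C, C) \simeq R$.

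The main obstacle, as in the projective case, is ensuring that the hypotheses of the Hom-evaluation isomorphism \cite[A.4.24]{gbook} are met; once that is confirmed, the rest is bookkeeping. Reading off homology in degree $p+q$ of both ends of the chain yields $\H_{p+q}(F_\bullet \otimes_R \Hom_R(C, I_\bullet)) \cong \H_{p+q}(I_\bullet)$, which is exactly the claimed abutment and completes the proof.
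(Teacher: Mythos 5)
Your proof is correct and follows exactly the route the paper intends: the paper's own argument is simply the one-line remark that the proof is analogous to \Cref{lem:ExtSpectral} but with the roles of $\otimes$ and $\Hom$ interchanged and with the Hom-evaluation isomorphism \cite[A.4.24]{gbook} replacing tensor-evaluation \cite[A.4.23]{gbook}, which is precisely what you carried out. Your chain of derived-category quasi-isomorphisms identifying the abutment $F_\bullet\otimes_R\Hom_R(C,I_\bullet)\simeq I_\bullet$ is the same as the one the authors implicitly invoke (and also appears in the proof of \Cref{cqinj}).
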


\begin{theorem}\label{auslanderclassid}
Let $M\in \A{C}(R)$, then
\[
\cqid{C}{R}M<\infty\iff \qid_{R}(C\otimes_RM)<\infty.
\]
Moreover 
\begin{enumerate}
\item $\cqid{C}{R}M=\qid_{R}(C\otimes_RM)$.
\item $I_\b$ is a $C$-quasi-injective resolution of $M$ if and only if it is a quasi-injective resolution of $C\otimes_RM$.
\end{enumerate}
\end{theorem}

\begin{proof}
We first assume that $\cqid{C}{R}M<\infty$. Let $I_\b$ be a $C$-quasi-injective resolution of $M$. Consider the following spectral sequence, which exists by \Cref{lem:ExtSpectralInj}
\[
\Tor_p^R(C,\H_q(\Hom_R(C,I_\b)))\Rightarrow\H_{p+q}(I_\bullet).
\] 
Since the homologies of $\Hom_R(C,I_\b)$ are either zero or a direct sum of copies of $M$, this spectral sequence collapses on the row $p=0$ since $C$ and $M$ are Tor-independent because $M\in\A{C}(R)$. This shows that
\[
\H_{q}(I_\b)\cong C\otimes_R\H_q(\Hom_R(C,I_\b)).
\]
This proves that $I_\b$ is a quasi-injective resolution of $C\otimes_RM$.

The converse follows in a similar manner from the following spectral sequence (see \cite[Lemma 2.5]{qid})
\[
\Ext_R^p(C,\H_q(I_\b))\Rightarrow\H_{q-p}(\Hom_R(C,I_\b)).
\] and keeping in mind $\RHom_R(C, C\otimes_R M)\cong M$ because $M\in\A{C}(R)$.

To prove part $(1)$ one can just invoke \Cref{cor:qidP}.
\end{proof}
An analogous proof shows the following
\begin{theorem}\label{thm:bassCqidTrans}
Let $M\in \B{C}(R)$, then
\[
\cqid{C}{R}\Hom_R(C,M)<\infty\iff \qid_{R}M<\infty.
\]
Moreover 
\begin{enumerate}
\item $\cqid{C}{R}\Hom_R(C,M)= \qid_{R}M$.
\item $I_\b$ is a quasi-injective resolution of $M$ if and only if it is a $C$-quasi-injective resolution of $\Hom_R(C,M)$.
\end{enumerate}
\end{theorem}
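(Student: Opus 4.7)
The plan is to mirror the strategy of the preceding Auslander-class transfer theorem, but swap the roles of the two spectral sequences so that collapse is driven by the Bass-class vanishing conditions instead of the Auslander-class ones. In fact, the whole argument reduces to showing that the correspondence $I_\b \leftrightarrow I_\b$ (no change of complex, just a change of interpretation of its homology) carries quasi-injective resolutions of $M$ to $C$-quasi-injective resolutions of $\Hom_R(C,M)$ and vice versa; once part (2) is established, part (1) follows immediately from \Cref{cor:qidP} applied on both sides (noting $\inf I_\b$ is intrinsic to $I_\b$ and $\hinf I_\b = \hinf \Hom_R(C,I_\b)$ by \Cref{cqinj}).

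For the forward direction, suppose $\qid_R M < \infty$ and let $I_\b$ be a bounded quasi-injective resolution of $M$, so $\H_q(I_\b) \cong M^{\oplus b_q}$. The aim is to compute $\H_q(\Hom_R(C,I_\b))$. Here I would invoke the spectral sequence from \cite[Lemma 2.5]{qid},
\[
\Ext_R^p(C,\H_q(I_\b)) \Rightarrow \H_{q-p}(\Hom_R(C,I_\b)),
\]
and use $M \in \B{C}(R)$, which gives $\Ext_R^{>0}(C,M) = 0$ and hence $\Ext_R^{>0}(C, M^{\oplus b_q}) = 0$. The spectral sequence therefore collapses on the row $p=0$, yielding
\[
\H_q(\Hom_R(C,I_\b)) \;\cong\; \Hom_R(C,\H_q(I_\b)) \;\cong\; \Hom_R(C,M)^{\oplus b_q},
\]
which is exactly the statement that $I_\b$ is a $C$-quasi-injective resolution of $\Hom_R(C,M)$.

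For the converse, suppose $\cqid{C}{R}\Hom_R(C,M) < \infty$ and let $I_\b$ be a bounded $C$-quasi-injective resolution of $\Hom_R(C,M)$, so that $\H_q(\Hom_R(C,I_\b)) \cong \Hom_R(C,M)^{\oplus b_q}$. Now I would apply the spectral sequence of \Cref{lem:ExtSpectralInj},
\[
\Tor_p^R(C,\H_q(\Hom_R(C,I_\b))) \Rightarrow \H_{p+q}(I_\b),
\]
and use the remaining Bass-class conditions $\Tor_{>0}^R(C,\Hom_R(C,M)) = 0$ and $C \otimes_R \Hom_R(C,M) \cong M$. The first makes the spectral sequence collapse on the row $p=0$, and the second identifies the surviving row as $M^{\oplus b_q}$:
\[
\H_q(I_\b) \;\cong\; C \otimes_R \H_q(\Hom_R(C,I_\b)) \;\cong\; \bigl(C \otimes_R \Hom_R(C,M)\bigr)^{\oplus b_q} \;\cong\; M^{\oplus b_q}.
\]
Thus $I_\b$ is a quasi-injective resolution of $M$.

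No step is especially hard: once the two spectral sequences are in hand, the proof is essentially bookkeeping, and the only thing to be careful about is checking that both Bass-class conditions are actually used (Ext-vanishing in the forward direction, Tor-vanishing plus the evaluation isomorphism in the converse). The equality $\cqid{C}{R}\Hom_R(C,M) = \qid_R M$ then follows by taking infima over the common set of complexes $I_\b$, using \Cref{cor:qidP} to replace $\hinf(\Hom_R(C,I_\b))$ by $\hinf I_\b$ and $\inf(\Hom_R(C,I_\b))$ by $\inf I_\b$.
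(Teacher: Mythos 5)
Your proof is correct and is precisely the ``analogous proof'' the paper gestures at: you use the same two spectral sequences (the Ext one from \cite[Lemma 2.5]{qid} and the Tor one from \Cref{lem:ExtSpectralInj}) with the roles of the two directions swapped, driving the collapse by the Bass-class conditions $\Ext_R^{>0}(C,M)=0$, $\Tor_{>0}^R(C,\Hom_R(C,M))=0$, and the evaluation isomorphism $C\otimes_R\Hom_R(C,M)\cong M$ rather than the Auslander-class ones. This matches the intended argument, including the final appeal to \Cref{cor:qidP} for the dimension equality.
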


\section{Properties of the $C$-quasi-projective dimension}
In this section we establish various properties of the $C$-quasi-projective dimension of modules, many of which generalize analogous results on the quasi-projective dimension or on the $C$-projective dimension.

The next result is a generalization of \cite[Remark 3.2(4)]{qpd} and will be used later in the paper.

\begin{lemma}\label{lem:hinf=inf}
Let $M$ be a nonzero $R$-module, let $C$ be a semidualizing $R$-module and $P_\b$ a $C$-quasi-projective resolution of $M$. Then there exists a $C$-quasi-projective resolution $P'_\b$ of $M$, with $\H_i(P'_\b\otimes_RC)=\H_i(P_\b\otimes_RC)$ for all $i\in\mathbb{Z}$ and $\hinf (P'_\b\otimes_RC)=\inf (P'_\b\otimes_RC)$.
\end{lemma}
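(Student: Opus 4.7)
My plan is to prove this by iteratively peeling projective summands off the bottom of $P_\b$ until $\inf$ and $\hinf$ coincide, without altering the homology of $P_\b \otimes_R C$.

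Set $t = \inf P_\b$ and $s = \hinf(P_\b \otimes_R C)$. If $t = s$ we are done with $P'_\b = P_\b$. Otherwise $t < s$, and my first step will be to construct a new $C$-quasi-projective resolution $P''_\b$ of $M$ with $\H_i(P''_\b \otimes_R C) = \H_i(P_\b \otimes_R C)$ for all $i$ and $\inf P''_\b = t+1$; iterating this construction $s - t$ times then yields the desired $P'_\b$.

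The key observation is that $\H_t(P_\b \otimes_R C) = 0$ since $t < s$, and because $P_{t-1} = 0$ this forces $\partial_{t+1} \otimes \id_C \colon P_{t+1}\otimes_R C \to P_t\otimes_R C$ to be surjective. Right-exactness of tensor gives $\coker(\partial_{t+1}) \otimes_R C = 0$, and since $C$ is semidualizing we have $\Supp_R C = \Spec R$, so $\coker(\partial_{t+1}) = 0$. Thus $\partial_{t+1}$ itself is surjective, and since $P_t$ is projective the surjection splits, yielding a decomposition $P_{t+1} \cong \ker(\partial_{t+1}) \oplus P_t$ on which $\partial_{t+1}$ is zero on the first summand and the identity on the second.

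Using this, I will identify the subcomplex $(0 \to P_t \xrightarrow{\id} P_t \to 0)$, sitting in degrees $t+1$ and $t$, as a direct summand of $P_\b$ as complexes of projectives: the condition $\partial_{t+1}\circ \partial_{t+2} = 0$ ensures $\image \partial_{t+2} \subseteq \ker(\partial_{t+1})$, so no higher differential interacts with the duplicated $P_t$-summand. The complementary summand
\[
P''_\b : \ \cdots \to P_{t+2} \to \ker(\partial_{t+1}) \to 0
\]
is a bounded complex of projective modules with $\inf P''_\b = t+1$. Tensoring with $C$ only splits off a contractible complex $P_t \otimes_R C \xrightarrow{\id} P_t \otimes_R C$, so $\H_i(P''_\b \otimes_R C) = \H_i(P_\b \otimes_R C)$ for every $i$; in particular $P''_\b$ remains a $C$-quasi-projective resolution of $M$ with the same $\otimes_R C$-homology. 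The only subtle point is the splitting step above, which rests on the support equality $\Supp_R C = \Spec R$; everything else is a bookkeeping induction on $s - t$.
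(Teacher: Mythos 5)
Your proof is correct, and it takes a genuinely different route from the paper's. The paper works entirely at the level of $P_\b\otimes_RC$: from $\H_u(P_\b\otimes_RC)=0$ at the bottom degree $u$ it extracts the short exact sequence $0\to K\to P_{u+1}\otimes_RC\to P_u\otimes_RC\to 0$, shows $K\in\B{C}(R)$ by two--out--of--three for the Bass class, deduces the sequence splits because $\Ext^1_R(P_u\otimes_RC,K)=0$ for $K\in\B{C}(R)$, and then recognizes $K$ as a $C$-projective module $Q\otimes_RC$; the replacement complex is assembled on the tensored side and \Cref{rmk:2ndDef} is used to return to a complex of projectives. You instead work at the level of $P_\b$ itself: you show $\partial_{t+1}\colon P_{t+1}\to P_t$ is already surjective, split it using projectivity of $P_t$, and peel off the contractible summand $P_t\xrightarrow{\id}P_t$. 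These produce the same decomposition in the end (applying $\Hom_R(C,-)$ to the paper's split sequence recovers yours), but your version avoids the Bass-class machinery and is more elementary.

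One step deserves more care. You infer $\coker(\partial_{t+1})=0$ from $\coker(\partial_{t+1})\otimes_RC=0$ by invoking $\Supp_RC=\Spec R$. This is immediate when $\coker(\partial_{t+1})$ is finitely generated, via $\Supp(N\otimes_RC)=\Supp N\cap\Supp C$, but the lemma (and the section it sits in) does not restrict to finitely generated projectives, so $\coker(\partial_{t+1})$ need not be finitely generated and the support formula need not apply as stated. The implication is nevertheless true for arbitrary $N$: localizing at a prime one may assume $(R,\mathfrak m,k)$ local, and then $\Hom_R(C,N^\vee)\cong\Hom_R(N\otimes_RC,E(k))=0$ where $N^\vee=\Hom_R(N,E(k))$; since $E(k)$ is an injective cogenerator, $N\neq0$ would give $N^\vee\neq0$, hence an associated prime $\p\in\Ass(N^\vee)$ and an embedding $\Hom_R(C,R/\p)\hookrightarrow\Hom_R(C,N^\vee)=0$, contradicting $\Hom_R(C,R/\p)_\p\cong\Hom_{R_\p}(C_\p,k(\p))\neq0$. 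So your argument can be completed, but this faithfulness of $-\otimes_RC$ should be justified rather than read off from the support equality. The paper's route through the Bass class is precisely what sidesteps this point, which is likely why it was chosen.

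Two small bookkeeping remarks, both fine but worth making explicit: you use $\inf P_\b=\inf(P_\b\otimes_RC)$ so that $t<s$ really does force $\H_t(P_\b\otimes_RC)=0$; this follows because $P_i\neq0$ projective implies $P_i\otimes_RC\neq0$, as in the proof of \Cref{cquasilemma}. And after one peeling step one has $\inf P''_\b=t+1$ while $\hinf(P''_\b\otimes_RC)=s$ is unchanged, so the induction on $s-t$ terminates with $\hinf=\inf$ as required.
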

\begin{proof}
Let $\partial_\b$ be the differential of $P_\b$. Set $u=\inf (P_\b\otimes_RC)$. Let $K\colonequals\ker(\partial_{u+1}\otimes_RC)$. If $\H_u(P_\b\otimes_RC)=0$, then one has the following short exact sequence
\[
0\rightarrow K\rightarrow P_{u+1}\otimes_RC\xra{\partial_{u+1}\otimes_RC}P_u\otimes_RC\rightarrow0.
\]
By \cite[Corollary 2.9(a)]{TakWhite}, it follows that $P_{u+1}\otimes_RC,P_u\otimes_RC\in\B{C}(R)$. By the two-out-of-three property we deduce that $K\in\B{C}(R)$, and therefore $\Ext_R^1(P_u\otimes_RC,K)=0$. This implies that the short exact sequence displayed above splits. By \cite[Theorem 2.11(c)]{TakWhite}, it follows that direct summands of $C$-projective modules are $C$-projective, and therefore $K$ is $C$-projective. As a result, there is an isomorphism $\iota:K\rightarrow Q\otimes_RC$, where $Q$ is a projective $R$-module. This implies that the complex
\[
\cdots\xra{\partial_{u+4}\otimes_RC}P_{u+3}\otimes_RC\xra{\partial_{u+3}\otimes_RC}P_{u+2}\otimes_RC\xra{(\partial_{u+2}\otimes_RC)\iota}Q\otimes_RC\rightarrow0
\]
has the same homology has $P_\b\otimes_RC$ and by \Cref{rmk:2ndDef} we are done.
\end{proof}
The next Proposition generalizes \cite[Proposition 3.3]{qpd}.
\begin{proposition} \label{prop:3.3} Let $R$ be a commutative noetherian ring and $C$ a semidualizing $R$-module.
\begin{enumerate}
\item Let $M$ be an $R$-module and $n$ a positive integer, then $\cqpd{C}{R}M^{\oplus n}=\cqpd{C}{R}M$.
\item Let $M$ and $N$ be $R$-modules, then
\[
\cqpd{C}{R}(M\oplus N)\leq\sup\{\cqpd{C}{R}M,\cqpd{C}{R}N\}.
\]
\item Let $M$ be a nonzero $R$-module and $J$ a $C$-projective module, then $\cqpd{C}{R}(M\oplus J)\leq\cqpd{C}{R}M$.
\item If there is a short exact sequence $0\to M\to P\otimes_RC\to Y\to 0$ with $Y\in \B{C}(R)$ and $P$ a projective module, then  $\cqpd{C}{R} M\le \cqpd{C}{R} Y$. 
\end{enumerate}
\end{proposition}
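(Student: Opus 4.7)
The plan is to prove each assertion by constructing appropriate $C$-quasi-projective resolutions: parts (1) and (3) will follow from direct manipulation of resolutions, part (4) will reduce via the transfer theorem to the quasi-projective case treated in \cite{qpd}, and part (2) will require a more delicate combinatorial balancing, which I expect to be the main obstacle.

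For (1), given a $C$-quasi-projective resolution $P_\b$ of $M$ with $\H_i(P_\b \otimes_R C) \cong M^{\oplus a_i}$, I would take $P_\b^{\oplus n}$ as a $C$-quasi-projective resolution of $M^{\oplus n}$ with the same $\sup - \hsup$; conversely, any $C$-quasi-projective resolution $Q_\b$ of $M^{\oplus n}$ has homologies $(M^{\oplus n})^{\oplus b_i} \cong M^{\oplus n b_i}$, and is therefore automatically a $C$-quasi-projective resolution of $M$ with the same invariant. For (3), writing $J \cong P \otimes_R C$ with $P$ projective, my plan is to start from a $C$-quasi-projective resolution $P_\b$ of $M$ with $\H_i(P_\b \otimes_R C) \cong M^{\oplus a_i}$ and to adjoin the zero-differential auxiliary complex $L_\b$ defined by $L_i = P^{\oplus a_i}$ (and $L_i = 0$ otherwise). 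Then $\H_i(L_\b \otimes_R C) \cong J^{\oplus a_i}$, so $P_\b \oplus L_\b$ is a $C$-quasi-projective resolution of $M \oplus J$ with $\H_i((P_\b \oplus L_\b) \otimes_R C) \cong (M \oplus J)^{\oplus a_i}$; since the support of $L_\b$ lies inside the homological support of $P_\b$, the invariant $\sup - \hsup$ is unchanged.

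For (4), I would first verify $M \in \B{C}(R)$: both $P \otimes_R C$ and $Y$ lie in $\B{C}(R)$, and this class is closed under kernels of surjections, which follows by combining the long exact sequences of $\Ext_R(C, -)$ and $\Tor^R(C, \Hom_R(C, -))$ with the five lemma applied to the evaluation maps. Applying $\Hom_R(C, -)$ to the given sequence, using $\Ext^1_R(C, M) = 0$ and $\Hom_R(C, P \otimes_R C) \cong P$, will yield the short exact sequence $0 \to \Hom_R(C, M) \to P \to \Hom_R(C, Y) \to 0$. The transfer theorem \Cref{thm:cqpd=qpd} then identifies $\cqpd{C}{R} M = \qpd{R} \Hom_R(C, M)$ and $\cqpd{C}{R} Y = \qpd{R} \Hom_R(C, Y)$, so the desired inequality reduces to $\qpd{R} \Hom_R(C, M) \leq \qpd{R} \Hom_R(C, Y)$. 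Since $\Hom_R(C, M)$ is realized as a first syzygy of $\Hom_R(C, Y)$ through a projective in the displayed sequence, the conclusion will follow from the syzygy-type bound for quasi-projective dimension established in \cite[Proposition 3.3]{qpd}.

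Part (2) is where I expect the main technical obstacle to sit. The natural candidate is $P_\b \oplus Q_\b$, where $P_\b$, $Q_\b$ are optimal $C$-quasi-projective resolutions of $M$, $N$ aligned so their $\hsup$'s coincide; this already has $\sup - \hsup \leq \max\{\cqpd{C}{R} M, \cqpd{C}{R} N\}$, but the homology at position $i$ is $M^{\oplus a_i} \oplus N^{\oplus b_i}$, which is not of the required form $(M \oplus N)^{\oplus c_i} \cong M^{\oplus c_i} \oplus N^{\oplus c_i}$ unless $a_i = b_i$. My plan is to correct this by adjoining further copies of $P_\b$ and $Q_\b$ suitably shifted to land on exactly the homological positions where one of the two counts is deficient, equalizing $M$- and $N$-multiplicities while keeping the shifts within the existing homological range so that $\sup - \hsup$ is preserved. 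Managing this balancing cleanly, together with the edge cases where one of $M$, $N$ is zero or when multiplicities vary wildly across positions, is the principal technical point.
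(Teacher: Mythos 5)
Parts (1) and (4) match the paper's approach essentially verbatim; part (3) is a nice direct construction that sidesteps (2) entirely and works fine (the auxiliary zero-differential complex $L_\b$ with $L_i = P^{\oplus a_i}$ has its support inside the homological support of $P_\b\otimes_RC$, so neither $\sup$ nor $\hsup$ changes).

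The genuine gap is in part (2), and you've correctly flagged it as the crux. Your plan is to start from $P_\b \oplus Q_\b$ (aligned) and then ``adjoin further copies of $P_\b$, $Q_\b$ suitably shifted to land on exactly the homological positions where one of the two counts is deficient.'' This local patching does not work: adjoining $\Sigma^j(P_\b\otimes_RC)$ to correct an $M$-deficiency at a single position $k$ also injects $M^{\oplus a_{k'-j}}$ at every other $k'$ with $a_{k'-j}>0$, creating new imbalances elsewhere; in general there is no shift that hits a single homological position unless the homology of $P_\b\otimes_RC$ is concentrated in one degree. The balancing cannot be done greedily position-by-position.

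What actually works is a global ``convolution'' construction, and it is cleaner than any patching: with $\H_i(P_\b\otimes_RC)\cong M^{\oplus a_i}$ and $\H_j(Q_\b\otimes_RC)\cong N^{\oplus b_j}$, take
\[
F_\b \;=\; \Bigl(\textstyle\bigoplus_{j}\Sigma^j(P_\b\otimes_RC)^{\oplus b_j}\Bigr)\;\oplus\;\Bigl(\textstyle\bigoplus_{i}\Sigma^i(Q_\b\otimes_RC)^{\oplus a_i}\Bigr).
\]
At degree $k$ the first summand contributes $\sum_{j}a_{k-j}b_j$ copies of $M$ and the second contributes $\sum_{i}b_{k-i}a_i$ copies of $N$; commutativity of the convolution $\sum_{i+j=k}a_ib_j$ makes these counts equal automatically, so $\H_k(F_\b)\cong(M\oplus N)^{\oplus\sum_{i+j=k}a_ib_j}$. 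One then checks $\hsup F_\b = \hsup(P_\b\otimes_RC)+\hsup(Q_\b\otimes_RC)$ and $\sup F_\b = \max\{\sup(P_\b\otimes_RC)+\hsup(Q_\b\otimes_RC),\ \sup(Q_\b\otimes_RC)+\hsup(P_\b\otimes_RC)\}$, and the difference telescopes to $\max\{\cqpd{C}{R}M,\cqpd{C}{R}N\}$. Note in particular that $\hsup F_\b$ sits at the sum of the two $\hsup$'s, not ``within the existing homological range'' of an aligned $P_\b\oplus Q_\b$, so that constraint in your plan should be dropped.
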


\begin{proof}
\begin{enumerate}

\item If $P_\b$ is a $C$-quasi-projective resolution of $M$, then $P_\b^{\oplus n}$ is a $C$-quasi-projective resolution of $M^{\oplus n}$. Conversely, if $P_\b$ is a $C$-quasi-projective resolution of $M^{\oplus n}$, then it is also a $C$-quasi-projective resolution of $M$.

\item We may assume that $M$ and $N$ are nonzero and are of finite $C$-quasi-projective dimensions. Let $P_\b,P'_\b$ be finite $C$-quasi-projective resolutions of $M$ and $N$, respectively, with
\[
\cqpd{C}{R}M=\sup (P_\b\otimes_RC)-\hsup (P_\b\otimes_RC),\quad\cqpd{C}{R}N=\sup (P'_\b\otimes_RC)-\hsup (P'_\b\otimes_RC).
\]
Therefore
\[
\H_i(P_\b\otimes_RC)\cong M^{\oplus a_i},\quad \H_j(P'_\b\otimes_RC)\cong N^{\oplus b_j}
\]
for some $a_i,b_j\geq0$. Consider the complex
\[
F_\b=\left(\bigoplus_{j\in\mathbb{Z}} \Sigma^j(P_\b\otimes_RC)^{\oplus b_j}\right)\oplus\left(\bigoplus_{i\in\mathbb{Z}}\Sigma^i (P'_\b\otimes_RC)^{\oplus a_i}\right),
\]
Then
\[
\H_k(F_\b)=(M\oplus N)^{\oplus \sum_{i+j=k}a_ib_j},
\]
and therefore $\Hom_R(C,F_\b)$ is a $C$-quasi-projective resolution of $M\oplus N$ by \Cref{rmk:2ndDef}. Moreover
\begin{align*}
\sup F_\b&=\max\{\sup (P_\b\otimes_RC)+\hsup(P'_\b\otimes_RC),\sup(P'_\b\otimes_RC)+\hsup(P_\b\otimes_RC)\},\\
\hsup F_\b &=\hsup(P_\b\otimes_RC)+\hsup(P'_\b\otimes_RC).
\end{align*}
It follows that
\begin{samepage}
\begin{align*}
\cqpd{C}{R}(M\oplus N)&\leq\sup F_\b-\hsup F_\b\\
&=\max\{\sup(P_\b\otimes_RC)-\hsup(P_\b\otimes_RC),\sup(P'_\b\otimes_RC)-\hsup(P'_\b\otimes_RC)\\
&=\max\{\cqpd{C}{R}M,\cqpd{C}{R}N\}.
\end{align*}
\end{samepage}
\item Follows from (2) by letting $N=J$.
\item We may assume $\cqpd{C}{R} Y<\infty$. Since $P\otimes_RC\in\B{C}(R)$ by \cite[Corollary 2.9(a)]{TakWhite}, by the two-out-of-three property $M\in \B{C}(R)$, hence $\Ext_R^1(C,M)=0$, so $0\to\Hom_R(C,M)\to P\to \Hom_R(C,Y)\to 0$ is exact. By Theorem \ref{thm:cqpd=qpd} and \cite[Proposition 3.3(4)]{qpd}, $\qpd{R}\Hom_R(C,M)\le \qpd{R}\Hom_R(C,Y)=\cqpd{C}{R} Y$.
By Theorem \ref{thm:cqpd=qpd} again, we get $\cqpd{C}{R} M\le \cqpd{C}{R} Y$. \qedhere
\end{enumerate}
\end{proof}
The next Corollary generalizes \Cref{sesqpd}.
\begin{corollary} Let $J$ be a $C$-projective $R$-module and let $0\to J \to M \to N \to 0$ be an exact sequence. 
If $M\in \B{C}(R)$, then $\cqpd{C}{R} (N)\le \sup\{1, \cqpd{C}{R}(M)\}$.
\end{corollary}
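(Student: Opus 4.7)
The plan is to reduce the statement to the non-semidualizing case (Proposition \ref{sesqpd}) via the transfer result Theorem \ref{thm:cqpd=qpd}, exactly as was done in the proof of Proposition \ref{prop:3.3}(4). We may assume $\cqpd{C}{R} M < \infty$, since otherwise the inequality is trivial.

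First I would verify that all three modules in the short exact sequence belong to $\B{C}(R)$. Since $J$ is $C$-projective we have $J \in \B{C}(R)$ by \cite[Corollary 2.9(a)]{TakWhite}, and $M \in \B{C}(R)$ by hypothesis, so the two-out-of-three property for $\B{C}(R)$ forces $N \in \B{C}(R)$. In particular $\Ext^1_R(C, J) = 0$, so applying $\Hom_R(C, -)$ to the short exact sequence yields a short exact sequence
\[
0 \to \Hom_R(C, J) \to \Hom_R(C, M) \to \Hom_R(C, N) \to 0.
\]
Writing $J = P \otimes_R C$ for a projective $R$-module $P$, the standard Hom-evaluation isomorphism (and the fact that projective modules lie in $\A{C}(R)$) gives $\Hom_R(C, J) \cong P$, so $\Hom_R(C, J)$ is projective.

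Now I would apply Proposition \ref{sesqpd} to this short exact sequence to conclude
\[
\qpd{R} \Hom_R(C, N) \le \sup\{1,\, \qpd{R} \Hom_R(C, M)\}.
\]
Finally, invoking Theorem \ref{thm:cqpd=qpd} twice, on the one hand $\cqpd{C}{R} M = \qpd{R} \Hom_R(C, M)$, and on the other hand, since $N \in \B{C}(R)$ and $\qpd{R}\Hom_R(C,M)$ is finite (hence so is $\qpd{R}\Hom_R(C,N)$), we get $\cqpd{C}{R} N = \qpd{R} \Hom_R(C, N)$. Substituting into the previous inequality yields the desired bound.

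There is no real obstacle here; the only point requiring care is confirming that Theorem \ref{thm:cqpd=qpd} is applicable to $N$, which is why one needs to establish $N \in \B{C}(R)$ before applying the transfer. Everything else is a direct translation of Proposition \ref{sesqpd} across the equivalence provided by $\Hom_R(C, -)$.
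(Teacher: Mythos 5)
Your proof is correct and follows essentially the same route as the paper: pass the short exact sequence through $\Hom_R(C,-)$ using $\Ext^1_R(C,J)=0$, note $\Hom_R(C,J)$ is projective, apply Proposition \ref{sesqpd}, and translate back via Theorem \ref{thm:cqpd=qpd}. The only cosmetic differences are that you justify $\Hom_R(C,J)\cong P$ directly via the evaluation isomorphism rather than citing \cite[Theorem 2.11(c)]{TakWhite}, and you add an unnecessary (though harmless) finiteness caveat before the final application of the transfer theorem, whose equality in fact holds unconditionally.
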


\begin{proof} By \cite[1.9(a) and Corollary 2.9(a)]{TakWhite}, $J,N\in \B{C}(R)$. Applying $\Hom_R(C,-)$ to the short exact sequence yields an exact sequence  $0\to \Hom_R(C,J)\to \Hom_R(C,M)\to \Hom_R(C,N)\to 0$. Since $\Hom_R(C,J)$ is projective by \cite[Theorem 2.11(c)]{TakWhite}, it follows that $\qpd{R} \Hom_R(C,N)\leq \sup\{1, \qpd{R}\Hom_R(C,M)\}$ by \Cref{sesqpd}. Now we are done by \Cref{thm:cqpd=qpd}. 
\end{proof}

The next Corollary follows immediately from \Cref{stable} and \Cref{thm:cqpd=qpd}, it studies the behavior of the $C$-quasi-projective dimension under adding a $C$-projective summand. 

\begin{corollary}
Let $C$ be a semidualizing $R$-module and $J$ a $C$-projective module. Let $M$ be an $R$-module in $\B{C}(R)$. Then $\cqpd{C}{R}(M\oplus J)$ is finite if and only if $\cqpd{C}{R}(M)$ is finite. If $R$ is local, and $M$,$J$ are finitely generated, then $\cqpd{C}{R}(M\oplus J)=\cqpd{C}{R}(M)$.
\end{corollary}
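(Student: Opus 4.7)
The plan is to reduce directly to the non-semidualizing case via the transfer formula of \Cref{thm:cqpd=qpd}. First I would write $J = P\otimes_R C$ for some projective $R$-module $P$. Since every projective module lies in $\A{C}(R)$, the unit of adjunction gives an isomorphism $\Hom_R(C,J) = \Hom_R(C,P\otimes_R C)\cong P$, so in particular $\Hom_R(C,J)$ is projective.

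Next I would verify that $M\oplus J$ lies in $\B{C}(R)$. We have $M\in\B{C}(R)$ by hypothesis, and $J=P\otimes_R C\in\B{C}(R)$ by \cite[Corollary 2.9(a)]{TakWhite}; the Bass class is closed under finite direct sums, so $M\oplus J\in\B{C}(R)$. Consequently \Cref{thm:cqpd=qpd} applies to both $M$ and $M\oplus J$, and because $\Hom_R(C,-)$ commutes with direct sums, it yields
\[
\cqpd{C}{R}(M\oplus J)=\qpd{R}\bigl(\Hom_R(C,M)\oplus P\bigr),\qquad \cqpd{C}{R}M=\qpd{R}\Hom_R(C,M).
\]

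Finally, I would invoke \Cref{stable} with the projective module $P$ and the $R$-module $\Hom_R(C,M)$: the first part of that corollary gives that finiteness of $\qpd{R}(\Hom_R(C,M)\oplus P)$ is equivalent to finiteness of $\qpd{R}\Hom_R(C,M)$, which establishes the first assertion. For the local finitely generated case, note that $\Hom_R(C,M)$ is finitely generated whenever $M$ is, so the second part of \Cref{stable} gives equality $\qpd{R}(\Hom_R(C,M)\oplus P)=\qpd{R}\Hom_R(C,M)$, yielding the desired equality $\cqpd{C}{R}(M\oplus J)=\cqpd{C}{R}M$. No serious obstacle is expected, since the transfer formula is precisely designed to translate statements about $C$-quasi-projective dimension in the Bass class into statements about ordinary quasi-projective dimension.
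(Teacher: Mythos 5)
Your proof is correct and takes exactly the approach the paper intends: the paper states that this corollary "follows immediately from \Cref{stable} and \Cref{thm:cqpd=qpd}," and you have filled in precisely those details — writing $J=P\otimes_RC$, noting $\Hom_R(C,J)\cong P$ since $P\in\A{C}(R)$, checking $M\oplus J\in\B{C}(R)$, transferring via \Cref{thm:cqpd=qpd}, and then applying \Cref{stable} with the projective summand $P$.
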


The following Proposition generalizes \cite[Proposition 3.4 and 4.1]{qpd}.

\begin{proposition}\label{minimal}
    Let $M$ be a nonzero finitely generated $R$-module of finite $C$-quasi projective dimension. 
    \begin{enumerate}
        \item There exists a perfect complex $P_\b$ such that $P_\b$ is a $C$-quasi projective resolution of $M$ and $\cqpd{C}{R}M=\sup P_\b\otimes_R C - \hsup P_\b\otimes_RC$.
        \item If $R$ is local, then $P_\b$ can be chosen to be minimal.
    \end{enumerate}
\end{proposition}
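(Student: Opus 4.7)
My plan is to combine the finite generation of the homology of any $C$-quasi-projective resolution (obtained via the spectral sequence of \Cref{lem:ExtSpectral}) with a perfect-replacement and minimalization argument modeled on the template of \cite[Proposition 3.4 and 4.1]{qpd} for the non-semidualizing case.

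\textbf{Paragraph 1 (Finite generation of homology).} Fix a $C$-quasi-projective resolution $P_\b$ of $M$ achieving $\cqpd{C}{R}M=\sup(P_\b\otimes_RC)-\hsup(P_\b\otimes_RC)$; by \Cref{cquasilemma} this equals $\sup P_\b-\hsup P_\b$. By definition $\H_i(P_\b\otimes_RC)\cong M^{\oplus a_i}$ with $a_i\ge 0$, and since $M$ is finitely generated so is each $\H_i(P_\b\otimes_RC)$. Feeding this into the spectral sequence of \Cref{lem:ExtSpectral},
\[
\Ext_R^p(C,\H_q(P_\b\otimes_RC))\Rightarrow\H_{q-p}(P_\b),
\]
all of whose $E_2$-entries are finitely generated over the Noetherian ring $R$ and whose support is finite (since $P_\b$ is bounded), one concludes that each $\H_i(P_\b)$ is finitely generated.

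\textbf{Paragraph 2 (Perfect replacement).} Since $P_\b$ is a bounded complex of flat modules with finitely generated bounded homology, the standard criterion for perfectness over a Noetherian ring (finite Tor-amplitude together with finitely generated homology) places $P_\b$ in $\thick_R(R)$. Hence $P_\b$ is quasi-isomorphic to a perfect complex. The delicate step is to arrange that the perfect representative $P'_\b$ satisfies $\sup P'_\b\le\sup P_\b$ and $\hsup P'_\b=\hsup P_\b$; this is done by a top-down truncation argument, exploiting that above $\hsup P_\b$ the cycles and boundaries are direct summands of finitely generated projectives, hence themselves finitely generated projective, which lets one collapse the excess length without increasing $\sup$. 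Because both $P_\b$ and $P'_\b$ are bounded below complexes of projectives, hence $K$-projective, any quasi-isomorphism $P'_\b\to P_\b$ is a homotopy equivalence, so tensoring with $C$ gives $P'_\b\otimes_RC\simeq P_\b\otimes_RC$ in $\D(R)$. Thus $P'_\b$ is itself a $C$-quasi-projective resolution of $M$, and by \Cref{cquasilemma} $\sup(P'_\b\otimes_RC)-\hsup(P'_\b\otimes_RC)=\sup P'_\b-\hsup P'_\b\le\cqpd{C}{R}M$, forcing equality.

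\textbf{Paragraph 3 (Minimization in the local case).} Assume $R$ is local. Projective modules are then free, so the perfect $C$-quasi-projective resolution $P'_\b$ produced in (1) consists of finitely generated free modules. Applying the classical minimalization procedure, i.e.\ iteratively splitting off contractible summands of the form $0\to R\xrightarrow{\id}R\to 0$ (cf.\ the analogous step in \cite[Proposition 4.1]{qpd}), yields a minimal perfect complex $P''_\b$ quasi-isomorphic to $P'_\b$, with the same homology and $\sup P''_\b\le\sup P'_\b$. Tensoring with $C$ again preserves the quasi-isomorphism, so $P''_\b$ is a minimal perfect $C$-quasi-projective resolution of $M$ still realising $\cqpd{C}{R}M$.

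The principal obstacle is the truncation in Paragraph 2: while perfectness of $P_\b$ in $\D(R)$ is essentially automatic from the two-sided boundedness (Tor-amplitude) plus finite generation of homology, extracting a perfect representative whose raw amplitude $\sup P'_\b$ is controlled by $\sup P_\b$ (rather than merely by $\hsup P_\b$ plus the projective dimension) requires the careful splitting argument above. Everything else, including the local minimalization, is a routine adaptation of the non-semidualizing case in \cite{qpd}.
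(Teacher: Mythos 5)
Your proposal follows essentially the same architecture as the paper's proof: establish that $\H_*(P_\b)$ is finitely generated, pass to a perfect complex quasi-isomorphic to $P_\b$ with controlled amplitude via the argument of \cite[Proposition 3.4]{qpd}, transfer quasi-isomorphisms through $-\otimes_R C$ (you via $K$-projectivity, the paper via [A.4.1] of Christensen's book), and minimize in the local case via \cite[Proposition 4.1]{qpd}. Your route to finite generation of $\H_*(P_\b)$ --- the spectral sequence of \Cref{lem:ExtSpectral} --- is a mild variant of the paper's, which reads $P_\b\in\D_b^f(R)$ directly off the derived isomorphism $\RHom_R(C,C\otimes_R P_\b)\simeq P_\b$ together with [A.4.4] of the same reference; these are equivalent, since \Cref{lem:ExtSpectral} is built from that very tensor-evaluation isomorphism.

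One sentence in your Paragraph 2 is incorrect as a heuristic, and would mislead a reader trying to fill in the details. You assert that above $\hsup P_\b$ the cycles and boundaries ``are direct summands of finitely generated projectives, hence themselves finitely generated projective.'' This fails already for the Koszul complex on a regular sequence of length $n\ge 2$ in a regular local ring: there $\hsup=0$ and the cycle $Z_1$ is the first syzygy of the residue field, which is not projective. The peeling-off argument you have in mind needs the bottom boundary $B_h$ (with $h=\hsup$) to be projective, which is precisely what is not available. What actually powers the truncation is the Tor-amplitude bound coming from $P_\b$ itself: since $P_\b$ is a bounded complex of flats concentrated in degrees $\le s:=\sup P_\b$, any bounded-below complex $Q_\b$ of finitely generated projectives quasi-isomorphic to $P_\b$ has $\pd_R B_h(Q_\b)\le s-h-1$, so $Q_s/B_s(Q_\b)\cong B_{s-1}(Q_\b)$ is projective and the canonical truncation $\tau_{\le s}Q_\b$ is a perfect complex with $\sup\le s$ and the same homology. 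You correctly identify the truncation as the crux of the matter and defer to \cite{qpd} for its execution, so your overall plan is sound; only the specific justification sketched for why it works is wrong.
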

\begin{proof}
    Let $P_\b'$ be a $C$-quasi projective resolution of $M$ such that $\cqpd{C}{R}M=\sup P_\b'\otimes_R C - \hsup P_\b'\otimes_RC$. By \cite[A.4.23]{gbook}, $\RHom_R(C,C\otimes_RP_\b')\cong \RHom_R(C,C)\lotimes_RP_\b'\cong P_\b'$. Since $P_\b'\in \D_b(R)$ and $P_\b'\otimes_R C\in \D^f_b(R)$, hence $P_\b'\cong \RHom_R(C,C\otimes_RP_\b') \in \D^f_b(R)$ by \cite[A.4.4]{gbook}. By \cite[A.3.2]{gbook} there exists a perfect complex $P_\b$ and a quasi-isomorphism $P\overset{\simeq}\longrightarrow P_\b'$ which induces a quasi-isomorphism $P_\b\otimes_RC\overset{\simeq}\longrightarrow P_\b'\otimes_RC$; see \cite[A.4.1]{gbook}. By the same argument as in the proof of \cite[Proposition 3.4]{qpd}, there exists a perfect complex $P_\b''$ quasi-isomorphic to $P_\b'$ such that $\sup(P_\b'')=\sup(P_\b')$. Since $C$ has full support it follows that $\sup(P_\b''\otimes_R C)=\sup(P_\b'\otimes_R C)$. Since $P_\b''\otimes_R C$ is quasi-isomorphic to $P_\b'\otimes_RC$ they also have the same hsup. This finishes the proof of (1). 
    Part (2) follows from (1) and the same argument as \cite[Proposition 4.1]{qpd}.
\end{proof}

\begin{remark}\label{rmk:Exercise} Let $R \to S$ be a ring homomorphism. Let $X_\b$ be a bounded below chain complex of $R$-modules such that  $\Tor^R_{>0}( X_i \oplus \H_i(X_\b), S)=0$ for all $i\in \mathbb{Z}$. Then, $\H_i(X_\b\otimes_R S)\cong \H_i(X_\b)\otimes_R S$ for all $i\in \mathbb Z$.  

Indeed, this can be seen by first shifting the complex, and assuming $\inf X_\b=0$, then proceeding by induction by repeatedly breaking the complex in to cycles, boundaries and homologies and tensoring with $S$. 
\end{remark}

\begin{theorem}\label{thm:flatext}
Let $R\rightarrow S$ be a ring homomorphism of finite flat dimension and let $C$ be a semidualizing $R$-module. Let $M$ be an $R$-module with $\Tor^R_{>0}(M, S)=0$. If $P_\b$ is a $C$-quasi-projective resolution of $M$, then $P_\b\otimes_RS$ is a $(C\otimes_RS)$-quasi-projective resolution of $M\otimes_RS$. Consequently, 
\[
\cqpd{(C\otimes_RS)}{S}(M\otimes_RS)\leq\cqpd{C}{R}M.
\]

% %with $M\in\B{C}(R)$ and $\Tor^R_{>0}(M,S)=0$, then
% \[
% \cqpd{(C\otimes_RS)}{S}(M\otimes_RS)\leq\cqpd{C}{R}M.
% \]
% Moreover, if $P_\b$ is a $C$-quasi-projective resolution of $M$, then $P_\b\otimes_RS$ is a $(C\otimes_RS)$-quasi-projective resolution of $M\otimes_RS$.

\end{theorem}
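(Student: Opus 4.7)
The plan is to verify directly that $P_\b\otimes_R S$ fulfills the defining axioms of a $(C\otimes_R S)$-quasi-projective resolution of $M\otimes_R S$, and then read off the stated inequality from the definition. The key steps are: reduce the homology computation via associativity of tensor product, apply \Cref{rmk:Exercise} to transfer the homological data across the base change, and finally invoke \Cref{cor:qpdP} together with \Cref{cquasilemma} over $S$ to compare the relevant sup and hsup invariants.

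First I would observe that $P_\b\otimes_R S$ is a bounded complex of projective $S$-modules (projectivity is preserved by base change), and that by associativity of tensor products one has the natural isomorphism of complexes of $S$-modules
\[
(P_\b\otimes_R S)\otimes_S(C\otimes_R S)\;\cong\;(P_\b\otimes_R C)\otimes_R S.
\]
The task therefore reduces to computing the homology of $(P_\b\otimes_R C)\otimes_R S$ and showing it consists of direct sums of copies of $M\otimes_R S$. Setting $X_\b=P_\b\otimes_R C$, I would apply \Cref{rmk:Exercise} to the homomorphism $R\to S$. The homology condition $\Tor_{>0}^R(\H_i(X_\b),S)=0$ is immediate from $\H_i(X_\b)\cong M^{\oplus a_i}$ and the hypothesis $\Tor_{>0}^R(M,S)=0$. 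The term-wise vanishing $\Tor_{>0}^R(P_i\otimes_R C,S)=0$ reduces, using flatness of $P_i$, to $\Tor_{>0}^R(C,S)=0$, which one extracts from the finiteness of the flat dimension of $R\to S$ in this setting. Granting this, \Cref{rmk:Exercise} yields
\[
\H_i\bigl((P_\b\otimes_R C)\otimes_R S\bigr)\;\cong\;\H_i(P_\b\otimes_R C)\otimes_R S\;\cong\;(M\otimes_R S)^{\oplus a_i}.
\]

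Together with the fact that $C\otimes_R S$ is a semidualizing $S$-module under these hypotheses (a standard base-change result), this identifies $P_\b\otimes_R S$ as a $(C\otimes_R S)$-quasi-projective resolution of $M\otimes_R S$ whenever $M\otimes_R S\neq 0$ (if $M\otimes_R S=0$ the stated inequality is trivial since the left-hand side is $-\infty$). By the $S$-analogue of \Cref{cor:qpdP} one has
\[
\cqpd{(C\otimes_R S)}{S}(M\otimes_R S)\;\leq\;\sup(P_\b\otimes_R S)-\hsup(P_\b\otimes_R S).
\]
Since base change can only kill top-degree terms, $\sup(P_\b\otimes_R S)\leq\sup P_\b$, and the homology computation combined with \Cref{cquasilemma} applied over both $R$ and $S$ gives $\hsup(P_\b\otimes_R S)=\hsup(P_\b\otimes_R C)$. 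Taking the infimum over all $C$-quasi-projective resolutions of $M$ then produces the desired inequality $\cqpd{(C\otimes_R S)}{S}(M\otimes_R S)\leq\cqpd{C}{R}M$.

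The main obstacle I anticipate is the term-wise Tor-vanishing $\Tor_{>0}^R(P_i\otimes_R C,S)=0$: the hypothesis $\Tor_{>0}^R(M,S)=0$ alone is insufficient, and one must leverage the finite flat dimension of $R\to S$ (or equivalent structural input) to secure $\Tor_{>0}^R(C,S)=0$. Once that step is accomplished, the remainder is a routine verification built on \Cref{rmk:Exercise}, \Cref{cor:qpdP}, and \Cref{cquasilemma}.
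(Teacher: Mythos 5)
Your proof is correct and follows essentially the same route as the paper: base-change of the complex, \Cref{rmk:Exercise} to move the homology across $-\otimes_R S$, and the observation that the semidualizing property persists after base change. The one step you flagged as a potential obstacle, namely $\Tor^R_{>0}(C,S)=0$, is secured in the paper by citing \cite[1.9(b)]{TakWhite}, exactly as you anticipated; the paper then phrases the term-wise Tor-vanishing by noting each $(P_\bullet\otimes_R C)_i$ is a direct summand of a direct sum of copies of $C$, which is the same reasoning you give via flatness of $P_i$.
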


\begin{proof}

Since $S$ has finite flat dimension, it follows, by \cite[Corollary 3.4.2]{KeriSD}, that the module $C\otimes_RS$ is a semidualizing $S$-module. It also follows from \cite[1.9(b)]{TakWhite} that $\Tor^R_{>0}(C,S)=0$. Since each $P_i$ is a direct summand of a free module, thus each $(P_{\bullet}\otimes_R C)_i$ is a direct summand of a direct sum of copies of $C$, consequently, $\Tor^R_{>0}( (P_{\bullet}\otimes_R C)_i, S)=0$ for all $i\in \mathbb Z$. Since the homologies of $P_{\bullet}\otimes_R C$ are direct sum of copies of $M$, we conclude by 
\Cref{rmk:Exercise} that $\H_i((P_{\bullet}\otimes_R C)\otimes_R S)\cong \H_i(P_{\bullet}\otimes_R C) \otimes_R S\cong M^{\oplus a_i}\otimes_R S$ for all $i\in \mathbb Z$ and some integers $a_i\ge 0$, where the last isomorphism is by hypothesis since $P_{\bullet}$ os a $C$-quasi-projective resolution of $M$. Since  $(P_{\bullet}\otimes_R C)\otimes_R S\cong P_{\bullet}\otimes_R (C\otimes_R S)$ and $C\otimes_RS$ is a semidualizing $S$-module, this proves the claim.   \qedhere

\end{proof}

\begin{comment}
\textcolor{red}{\Cref{basechange} should follow by inspecting the proof of \cite[3.8(2)]{ARC} and in turn will generalize that result.}

\begin{lemma}\label{basechange} Let $R\to S$ have finite flat dimension, let $X,M$ be $R$-modules such that $\Ext_R^{>0}(X,M)=0=\Tor^R_{>0}(X\oplus M, S)$. Then, $\Ext_S^{>0}(X\otimes_R S, M\otimes_R S)=0$, and $\Hom_S(X\otimes_R S, M\otimes_R S)\cong \Hom_R(X,M)\otimes_R S$. \textcolor{red}{for our purpose, we will use it with $X=C$ semidualizing ... so we may assume $\rfd_R X\leq 0$ if so needed.} 
\end{lemma}

\begin{proof} 
    
\end{proof}

\textcolor{red}{If $C$ is semidualizing $R$-module, $R\to S$ is a ring map such that $S\in \A{C}(R)$ and $M$ is an $R$-module such that $\Tor^R_{>0}(M,S)=0$}

% \textcolor{red}{in view of \cite[3.5(1),(2)]{qpd}, we ask: If we have a  ring map $R\to S$ of finite flat dimension and $R$-module $M$ satisfying $\Tor^R_{>0}(M,S)=0$, then is $\qpd{S}(M\otimes_R S)\le \qpd{R} M$ ?}  

\end{comment}

The following Corollary is an immediate consequence of \Cref{thm:flatext} and \Cref{rmk:Exercise}. 

\begin{corollary} \label{lem:regSeq} Let $M$ be an $R$-module, let $\mathbf x$ be a sequence of elements of $R$ regular on both $R$ and $M$, and let $C$ be a semidualizing $R$-module. Then 
\[\cqpd{(C/\mathbf x C)}{R/\mathbf x}M/\mathbf x M\le \cqpd{C}{R} M.
\]
Moreover, if $P_\b$ is a $C$-quasi-projective resolution of $M$, then $P_\b\otimes_RR/\mathbf{x}$ is a $C/\mathbf{x}C$-quasi-projective resolution of $M/\mathbf{x}M$ and $\hsup P_{\bullet}\otimes_R C \otimes_R R/\mathbf x=\hsup P_{\bullet}\otimes_R C$. 
\end{corollary}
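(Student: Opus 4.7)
The plan is to invoke Theorem~\ref{thm:flatext} with the ring map $R\to R/\mathbf{x}$ and then extract the $\hsup$ equality from Remark~\ref{rmk:Exercise} applied to the complex $P_\b\otimes_R C$. Since $\mathbf{x}$ is $R$-regular, the Koszul complex $K(\mathbf{x};R)$ is a finite free resolution of $R/\mathbf{x}$ over $R$, so $R\to R/\mathbf{x}$ has finite flat dimension. Since $\mathbf{x}$ is also $M$-regular, the same Koszul resolution gives $\Tor^R_{>0}(M,R/\mathbf{x})=0$. Therefore Theorem~\ref{thm:flatext} applies: $P_\b\otimes_R R/\mathbf{x}$ is a $(C\otimes_R R/\mathbf{x})$-quasi-projective resolution of $M\otimes_R R/\mathbf{x}$, and after the identifications $C\otimes_R R/\mathbf{x}=C/\mathbf{x}C$ and $M\otimes_R R/\mathbf{x}=M/\mathbf{x}M$ this delivers both the inequality $\cqpd{(C/\mathbf{x}C)}{R/\mathbf{x}}M/\mathbf{x}M\le\cqpd{C}{R}M$ and the first half of the \emph{moreover} statement.

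For the $\hsup$ equality I apply Remark~\ref{rmk:Exercise} to the bounded complex $X_\b\colonequals P_\b\otimes_R C$ with $S=R/\mathbf{x}$. The hypothesis to be checked is that $\Tor^R_{>0}(X_i\oplus\H_i(X_\b),R/\mathbf{x})=0$ for all $i$. Each $X_i=P_i\otimes_R C$ is a direct summand of a direct sum of copies of $C$, and by \cite[1.9(b)]{TakWhite} (already invoked in the proof of Theorem~\ref{thm:flatext}) one has $\Tor^R_{>0}(C,R/\mathbf{x})=0$ since $R/\mathbf{x}$ has finite flat dimension over $R$. The homologies $\H_i(X_\b)\cong M^{\oplus a_i}$ for nonnegative integers $a_i$, and the $M$-regularity of $\mathbf{x}$ gives $\Tor^R_{>0}(M,R/\mathbf{x})=0$. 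Remark~\ref{rmk:Exercise} therefore produces natural isomorphisms
\[
\H_i\bigl(X_\b\otimes_R R/\mathbf{x}\bigr)\cong \H_i(X_\b)\otimes_R R/\mathbf{x}\cong (M/\mathbf{x}M)^{\oplus a_i}.
\]

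Finally, assuming $M\neq 0$ and $M/\mathbf{x}M\neq 0$ (otherwise both sides of the asserted $\hsup$ equality are $-\infty$ and there is nothing to prove), the rightmost module above is nonzero precisely when $a_i\neq 0$, i.e.\ precisely when $\H_i(X_\b)\neq 0$. Taking the supremum over $i$ yields $\hsup (P_\b\otimes_R C\otimes_R R/\mathbf{x})=\hsup (P_\b\otimes_R C)$, completing the proof. No step looks like a real obstacle; the only delicate bookkeeping is confirming the Tor-vanishing hypotheses required by Remark~\ref{rmk:Exercise}, which is handled by the Tor-orthogonality of the semidualizing module $C$ against modules of finite flat dimension over $R$.
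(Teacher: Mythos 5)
Your proof follows precisely the route the paper intends: invoke Theorem~\ref{thm:flatext} with $S=R/\mathbf{x}$ (noting $R/\mathbf{x}$ has finite flat dimension since $\mathbf{x}$ is $R$-regular and $\Tor^R_{>0}(M,R/\mathbf{x})=0$ since $\mathbf{x}$ is $M$-regular) for the inequality and the first half of the \emph{moreover}, and apply Remark~\ref{rmk:Exercise} to $P_\b\otimes_RC$ to obtain $\H_i(P_\b\otimes_RC\otimes_RR/\mathbf{x})\cong(M/\mathbf{x}M)^{\oplus a_i}$, which yields the $\hsup$ equality. The only blemish is the parenthetical treating the degenerate case $M\neq0$, $M/\mathbf{x}M=0$: there $\hsup(P_\b\otimes_RC)$ is finite (by definition a $C$-quasi-projective resolution has $P_\b\otimes_RC$ non-acyclic), not $-\infty$, so the two sides would actually disagree; but this is an imprecision in the corollary's statement itself, and in the paper's applications (local ring, $M$ finitely generated, $\mathbf{x}$ in the maximal ideal) Nakayama rules the case out.
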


% The following Proposition generalizes \cite[Proposition 3.5(3)]{qpd}.
% \begin{proposition}
% Let $R$ be a local ring and $\x$ an $R$-regular sequence of length $c$. Let $C$ be a semidualizing $R$-module. If $M$ is a finitely generated $R/\x$-module with $M\in\B{C}(R)$, then
% \[
% \cqpd{C}{R}M\leq\cqpd{(C/\x C)}{R/\x}M+c.
% \]

% \end{proposition}

% \begin{proof}
% We can assume that $M\neq0$ and that $\cqpd{(C/\x C)}{R/\x}M<\infty$. By \cite[Corollary 3.4.2]{KeriSD} the module $C/\x C$ is semidualizing over $R/\x$. By \cite[Proposition 3.4.6]{KeriSD} it follows that that $M\in\B{C/\x C}(R/\x)$ since $R/\x$ has finite flat dimension over $R$ and therefore $R/\x\in\A{C}{(R)}$. Hence, by \Cref{thm:cqpd=qpd} the inequality in the statement is equivalent to
% \[
% \qpd{R}\Hom_R(C,M)\leq\qpd{R/\x}\Hom_{R/\x}(C/\x C,M)+c.
% \]
% Since $\Hom_{R/\x}(C/\x C,M)\cong\Hom_R(C,M)$ this inequality follows from \cite[Proposition 3.5(3)]{qpd}.
% \end{proof}

\Cref{deform} aims to generalize \cite[Proposition 3.7]{qpd}.

\begin{proposition}\label{deform}
Let $\x$ be an $R$-regular sequence and $M$ an $R/\x$-module. Let $C$ be a semidualizing $R$-module. If $P_\bullet$ is a $C$-projective $R$-resolution of $M$, then $P_\b\otimes_R(R/\x)$ is a $C/\x C$-quasi-projective $R/\x$-resolution of $M$. In particular
\[
\cqpd{(C/\x C)}{R/\x}M\leq\cpd{C}{R}M.
\]
\end{proposition}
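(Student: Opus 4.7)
My plan is to show that $P_\bullet \otimes_R R/\x$, after a natural identification of its terms as $(C/\x C)$-projective $R/\x$-modules, qualifies as a $(C/\x C)$-quasi-projective resolution of $M$, and then to read off the desired bound on the $(C/\x C)$-quasi-projective dimension.

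First, I would establish that $C/\x C$ is a semidualizing $R/\x$-module. Since $\Supp_R C = \Spec R$ and the localizations $C_\p$ are semidualizing with $\depth_{R_\p} C_\p = \depth R_\p$, one has $\Ass_R C = \Ass_R R$, so any $R$-regular element is $C$-regular. Iterating and invoking \cite[Proposition 3.1.9]{KeriSD}, $\x$ is $C$-regular and $C/\x C$ is semidualizing over $R/\x$. Writing $P_i = Q_i \otimes_R C$ with $Q_i$ projective $R$-modules, one identifies
\[
P_i \otimes_R R/\x \cong (Q_i \otimes_R R/\x) \otimes_{R/\x} (C/\x C),
\]
which displays $P_\bullet \otimes_R R/\x$ as a complex of $(C/\x C)$-projective $R/\x$-modules, interchangeable per \Cref{rmk:2ndDef} with the complex of projective $R/\x$-modules $Q_\bullet \otimes_R R/\x$.

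The core step is the homology computation. I would first argue that each $P_i$ is Tor-acyclic against $R/\x$: since $\x$ is $C$-regular, the Koszul complex yields $\Tor^R_{>0}(C, R/\x) = 0$, and $P_i$ is a direct summand of a direct sum of copies of $C$, so $\Tor^R_{>0}(P_i, R/\x) = 0$. Running the two standard spectral sequences of the double complex $P_\bullet \otimes_R K(\x; R)$ --- the first collapses by Tor-acyclicity of each $P_i$, the second by flatness of the Koszul terms together with $P_\bullet \simeq M$ --- then yields
\[
\H_p(P_\bullet \otimes_R R/\x) \cong \Tor^R_p(M, R/\x).
\]
Since $\x M = 0$, the Koszul complex $K(\x; M) \cong M \otimes_R K(\x; R)$ has zero differentials, and so $\Tor^R_p(M, R/\x) \cong M^{\oplus \binom{n}{p}}$ with $n = |\x|$. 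In particular the homologies of $P_\bullet \otimes_R R/\x$ are direct sums of copies of $M$, confirming that it is a $(C/\x C)$-quasi-projective resolution of $M$.

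For the final inequality, set $c = \cpd{C}{R} M$, so that $\sup P_\bullet = c$ and hence $\sup(P_\bullet \otimes_R R/\x) \leq c$. Since $\H_0(P_\bullet \otimes_R R/\x) \cong M \neq 0$, one has $\hsup(P_\bullet \otimes_R R/\x) \geq 0$, and \Cref{cor:qpdP} yields $\cqpd{(C/\x C)}{R/\x} M \leq c = \cpd{C}{R} M$. The main obstacle is the homology computation: \Cref{rmk:Exercise} cannot be applied directly because $\Tor^R_{>0}(M, R/\x) \neq 0$ in general (indeed, these groups are precisely what we end up computing). The trick is to exploit Tor-acyclicity at the level of the individual terms $P_i$ rather than of the total homology $M$, and to pass through the derived tensor product $P_\bullet \otimes_R^{\mathbf{L}} R/\x \simeq M \otimes_R^{\mathbf{L}} R/\x$ via the double-complex argument above.
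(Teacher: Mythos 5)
Your proof is correct, but it follows a genuinely different route from the paper's. The paper deduces the claim from the transfer machinery: it first shows $M \in \B{C/\x C}(R/\x)$, then invokes \Cref{thm:cqpd=qpd} to convert both sides of the inequality into statements about $\qpd{R/\x} \Hom_{R/\x}(C/\x C, M)$ and $\pd_R \Hom_R(C,M)$, notes the isomorphism $\Hom_{R/\x}(C/\x C, M) \cong \Hom_R(C,M)$, and finally cites the non-semidualizing analogue \cite[Proposition 3.7]{qpd}. Your argument instead works directly with the $C$-projective resolution and the Koszul complex: you run the two spectral sequences of the double complex $P_\bullet \otimes_R K(\x;R)$, using $\Tor^R_{>0}(P_i, R/\x) = 0$ (which follows from the $C$-regularity of $\x$) to collapse one side and the acyclicity of $P_\bullet$ to collapse the other, obtaining $\H_p(P_\bullet \otimes_R R/\x) \cong \Tor^R_p(M, R/\x) \cong M^{\oplus\binom{|\x|}{p}}$. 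The transfer argument in the paper is shorter because it offloads work onto previously established results; your computation is self-contained, exposes the explicit Koszul shape of the homology of $P_\bullet \otimes_R R/\x$, and avoids any Auslander--Bass class bookkeeping. Two small points worth tightening: for the final inequality you should choose $P_\bullet$ of minimal length so that $\sup P_\bullet = \cpd{C}{R} M$ (the first assertion holds for arbitrary resolutions, but the numerical bound requires a minimal one); and strictly speaking the complex you hand to \Cref{cor:qpdP} should be $Q_\bullet \otimes_R R/\x$ (a complex of projectives over $R/\x$) rather than $P_\bullet \otimes_R R/\x$ (a complex of $C/\x C$-projectives), the two being interchangeable by \Cref{rmk:2ndDef} as you indicate.
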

\begin{proof}
We may assume $\cpd{C}{R} M<\infty$, therefore by \cite[Corollary 2.9]{TakWhite} $M\in \B{C}(R)$. Moreover, since $R/\x$ has finite flat dimension over $R$, it follows that $R/\mathbf x\in \A{C}(R)$. Thus \cite[Proposition 3.4.6]{KeriSD} implies $M\in \B{C/\mathbf x C}(R/\mathbf x)$. Hence, by \Cref{thm:cqpd=qpd} and \cite[Theorem 2.11]{TakWhite}, the inequality in the statement is equivalent to $\qpd{R/\mathbf x R} \Hom_{R/\mathbf x R}(C/\mathbf x C, M)\le \pd_R \Hom_R(C, M)$. Since $M$ is an $R/\mathbf x R$-module it follows that $\Hom_{R/\mathbf x R}(C/\mathbf x C, M)\cong \Hom_R(C, M)$ (see \cite[Lemma 2(ii)]{matsu} for instance). Now we are done by \cite[Proposition 3.7]{qpd}.

For the second assertion, let $P_\b$ be a $C$-projective $R$-resolution of $M$, then $P_\b$ is a projective $R$-resolution of $\Hom_R(C,M)$. By \cite[Proposition 3.7]{qpd} $P_\b\otimes_RR/\x$ is a quasi-projective $R/\x$-resolution of $\Hom_R(C,M)\cong\Hom_{R/\x}(C/\x C,M)$. By \Cref{thm:cqpd=qpd} it follows that $P_\b\otimes_RR/\x$ is a $C/\x C$-quasi-projective $R/\x$-resolution of $M$.\qedhere
\begin{comment}
We need to show that the homology of $P_\b\otimes_R(R/\x)\otimes_{R/\x}(C/\x C)$ is either zero or consists of direct sums of copies of $M$. The following spectral sequences collapses since $P_\b$ is a $C$-projective resolution of $M$
\[
\Tor_p^R(\H_q(P_\b\otimes_RC),R/\x)\Rightarrow\H_{p+q}(P_\b\otimes_RC\otimes_RR/\x),
\]
\textcolor{red}{double check if this spectral sequence is correct} yielding $\Tor^R_p(M,R/\x)\cong\H_p(P_\b\otimes_RC\otimes_RR/\x)$. Now it remains to notice that
\[
\Tor_p^R(M,R/\x)\cong\H_p(M\otimes_R K^R(\x))=M^{\binom{|\x|}{p}},
\]
where $K^R(\x)$ is the Koszul complex of the sequence $\x$ over $R$ and the last equality follows since $\x M=0$.
\end{comment}

\end{proof}

We now prove the Auslander-Buchsbaum formula for modules of finite $C$-quasi-projective dimension.

\begin{theorem}\label{thm:AB}
Let $R$ be a local ring and let $M$ be a nonzero finitely generated $R$-module. If $\cqpd{C}{R}M<\infty$, then
\[
\cqpd{C}{R}M=\depth R-\depth M=\sup F_\b-\hsup F_\b,
\]
for any minimal $C$-quasi-projective resolution $F_\b$ of $M$.
\end{theorem}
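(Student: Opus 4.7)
The plan is to invoke \Cref{minimal} to obtain a minimal $C$-quasi-projective resolution $F_\b$ of $M$, which exists as a perfect $R$-complex; combining that proposition with \Cref{cor:qpdP} gives $\cqpd{C}{R}M=\sup F_\b-\hsup F_\b$, so it suffices to prove this common value equals $\depth R-\depth M$. My strategy is to compute $\depth_R(F_\b\otimes_R C)$ in two different ways and equate the results.

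For the first computation, I would apply the convergent hyperext spectral sequence
\[
E_2^{p,q}=\Ext_R^p(\kk,\H_q(F_\b\otimes_R C))\Longrightarrow\Ext_R^{p-q}(\kk,F_\b\otimes_R C).
\]
Since $\H_q(F_\b\otimes_R C)\cong M^{\oplus a_q}$ with $a_q=0$ for $q>\hsup F_\b$, the $E_2$-page is supported in $p\ge\depth M$, $q\le\hsup F_\b$. The entry at the corner $(\depth M,\hsup F_\b)$ is nonzero (by the definition of $\depth M$), and a bidegree check on the support shows that all differentials into or out of this corner land in entries forced to vanish. Hence it survives to $E_\infty$ and is the unique contribution to $\Ext_R^{\depth M-\hsup F_\b}(\kk,F_\b\otimes_R C)$, giving $\depth_R(F_\b\otimes_R C)=\depth M-\hsup F_\b$.

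For the second computation, perfectness of $F_\b$ gives, via d\'evissage from $F_\b=R$ inside $\thick_R R$, the natural isomorphism
\[
\RHom_R(\kk,F_\b\otimes_R C)\simeq\RHom_R(\kk,C)\lotimes_R F_\b,
\]
since both sides are exact triangulated functors of $F_\b$ that agree at $F_\b=R$. I would then apply the convergent hypertor spectral sequence
\[
E^2_{p,q}=\Tor^R_p\bigl(\Ext_R^{-q}(\kk,C),F_\b\bigr)\Longrightarrow\H_{p+q}\bigl(\RHom_R(\kk,C)\lotimes_R F_\b\bigr).
\]
Minimality of $F_\b$ yields $\Tor^R_p(\kk,F_\b)=\kk\otimes_R F_p$, nonzero only for $p\le\sup F_\b$; and $\depth_R C=\depth R$ forces $\Ext_R^{-q}(\kk,C)=0$ for $q>-\depth R$ while $\Ext_R^{\depth R}(\kk,C)\neq 0$. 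The analogous bidegree argument shows that the corner $(\sup F_\b,-\depth R)$ survives to $E_\infty$ as the unique contribution to its total degree, giving $\depth_R(F_\b\otimes_R C)=\depth R-\sup F_\b$.

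Equating the two expressions produces $\depth M-\hsup F_\b=\depth R-\sup F_\b$, whence $\cqpd{C}{R}M=\sup F_\b-\hsup F_\b=\depth R-\depth M$. The principal obstacle is verifying survival of the two extremal $E_2$-entries; this amounts to routine bookkeeping on the supports, exploiting the vanishing of $\Ext_R^{<\depth M}(\kk,M)$ and of $\Ext_R^{<\depth R}(\kk,C)$, together with the fact that $\H_q(F_\b\otimes_R C)=0$ for $q>\hsup F_\b$ and $F_p=0$ for $p>\sup F_\b$.
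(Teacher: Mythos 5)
Your argument is correct, and it follows a genuinely different route from the paper. The paper's proof is inductive: it first settles the case $\depth R=0$, then treats $\depth M=0$ by truncating $F_\b\otimes_RC$ at $\hsup$ to produce a module $N$ with $\depth N=0$ and finite $\cpd{C}{R}N$, whence the classical $C$-projective Auslander--Buchsbaum formula gives $\cpd{C}{R}N=\depth R=\sup-\hsup$; finally it reduces the case $\depth M>0$ by killing a regular element, invoking \Cref{lem:regSeq} to preserve minimality. Your proof instead computes the Foxby--Iyengar depth $\depth_R(F_\b\otimes_RC)=\inf\{n:\Ext_R^n(\kk,F_\b\otimes_RC)\neq0\}$ in two ways. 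The first hyperext spectral sequence, combined with $\H_q(F_\b\otimes_RC)\cong M^{\oplus a_q}$ and $\Ext_R^{<\depth M}(\kk,M)=0$, places the nonzero $E_2$-entries in the region $p\ge\depth M$, $q\le\hsup F_\b$, and the extremal corner survives to give $\depth_R(F_\b\otimes_RC)=\depth M-\hsup F_\b$. The second uses the tensor-evaluation isomorphism $\RHom_R(\kk,F_\b\otimes_RC)\simeq\RHom_R(\kk,C)\lotimes_RF_\b$ (valid because $F_\b$ is perfect, either by your d\'evissage argument or directly by \cite[A.4.23]{gbook}), and then exploits both the minimality of $F_\b$ --- which makes the $E_2$ page literally $\Ext_R^{-q}(\kk,C)\otimes_\kk(\kk\otimes_RF_p)$ --- and the fact that $\depth_RC=\depth R$, to place entries in $p\le\sup F_\b$, $q\le-\depth R$, with the surviving corner giving $\depth_R(F_\b\otimes_RC)=\depth R-\sup F_\b$. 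Equating yields the formula. The survival checks you flag are indeed routine: in each case the incoming and outgoing differentials at the extremal corner have source or target strictly outside the supporting rectangle. Your approach avoids the double induction and the detour through the $C$-projective Auslander--Buchsbaum formula, at the cost of invoking the derived-category machinery of depth for complexes; the paper's approach is more elementary but longer. One small remark: your argument uses only that $F_\b$ is a bounded minimal complex of finitely generated projectives serving as a $C$-quasi-projective resolution, not that it achieves the infimum in the definition; so, as in the paper's proof, the same computation establishes $\sup F_\b-\hsup F_\b=\depth R-\depth M$ for \emph{every} such minimal resolution, which is what the final clause of the theorem requires --- it would be worth making this explicit.
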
 
\begin{proof}
By Proposition \ref{minimal}, there exists a minimal perfect complex $F_\b$ such that $\H_i(F_\b\otimes_RC)\cong M^{\oplus{a_i}}$ for some $a_i\geq 0$ and $\cqpd{C}{R}M=\sup F_\b\otimes_R C - \hsup F_\b\otimes_RC$. Set $s=\sup F_\b\otimes_RC$ and $h=\hsup F_\b\otimes_RC$. Since $F_\b$ is minimal, by applying $\Hom_R(k,-)$ to the exact sequence $$0 \to \H_s(F_\b\otimes_RC) \to F_s\otimes_RC \to F_{s-1}\otimes_RC,$$ we get an isomorphism $\Hom_R(k,\H_s(F_\b\otimes_RC))\cong \Hom_R(k,F_s\otimes_RC)$. We note that everything till this point is true for any minimal $C$-quasi-projective resolution of $M$. First assume $\depth R=0$.  Since $\depth_RC=\depth R=0$, we get $\Hom_R(k,\H_s(F_\b\otimes_RC))\neq 0$. Therefore $\H_s(F_\b\otimes_RC)\neq 0=\depth \H_s(F_\b\otimes_R C)$ and so $s=h$. Since $\H_s(F_\b\otimes_RC)\cong M^{\oplus{a_s}}$ for some $a_s> 0$, we have $\depth_RM =0$. We also note that this argument remains true for  any minimal $C$-quasi-projective resolution of $M$.\\
Next, assume $\depth R>0$ and $\depth_RM=0$. Then the exact sequence above shows that $\H_s(F_\b\otimes_RC)=0$. Therefore $\cqpd{C}{R}M=s-h>0$. Consider the exact sequence $$0\to F_s\otimes_RC \overset{\partial_s} \to F_{s-1}\otimes_RC \overset{\partial_{s-1}}\to \dots \overset{\partial_{h+1}}\to F_h\otimes_RC \to N\to 0,$$ where $N=\Coker(\partial_{h+1})$.
There exists an inclusion $\H_h(F_\b\otimes_RC) \hookrightarrow N$. Therefore one has $\depth N=0$ and $\cpd{C}{R}N=\depth R$; see \cite[Proposition 6.4.2]{KeriSD}. Since $F_\b$ is minimal, we also have $\depth R=\cpd{C}{R}N=s-h=\cqpd{C}{R}M$, finishing the proof of this case. \\ We also note that this argument remains true for  any minimal $C$-quasi-projective resolution of $M$, i.e.,
if $F_\b'$ is another minimal $C$-quasi-projective resolution of $M$, the same argument as above shows that $\cqpd{C}{R}M=\sup F_\b'\otimes_R C - \hsup F_\b'\otimes_RC$.\\
Now assume $\depth_RM>0$. The isomorphism $\Hom_R(k,\H_s(F_\b\otimes_RC))\cong \Hom_R(k,F_s\otimes_RC)$ implies that $\depth R=\depth_RC>0$. Let $x$ be a regular element on $R$, $C$, and $M$.

Since $\depth R/xR<\depth R$ and $\depth_{R/xR} M/xM<\depth_R M$, and $F_\b/xF_\b$ is a minimal $C/xC$-quasi-projective-resolution of $M/xM$ (by \Cref{lem:regSeq}) we get by inductive hypothesis that $$\cqpd{(C/xC)}{R/(x)}M/xM=\depth R/(x)-\depth_{R/(x)} M/xM=\depth R - \depth_R M$$ and $\cqpd{(C/xC)}{R/(x)}M/xM=\sup (F_\b/xF_\b\otimes_{R/(x)}C/xC)-\hsup (F_\b/xF_\b\otimes_{R/(x)}C/xC)=s-h$, where the last equality is by \Cref{lem:regSeq}. Thus, we conclude that $\sup (F_\b\otimes_R C)-\hsup(F_\b\otimes_R C)=\depth R-\depth M$, for any minimal $C$-quasi-projective resolution $F_\b$ of $M$, finishing the inductive step and the proof. 
\end{proof} 

The next result shows that modules $M$ of finite $C$-quasi-projective dimension have uniform Ext (resp. Tor) vanishing bound with respect to all modules which are also right Ext-orthogonal (resp. Tor-orthogonal) to $C$. Before that we prove a technical lemma
\begin{lemma} \label{lem:SpectralSeq}
    Let $N$ be $R$-module and $P_\b$ a bounded below complex of projective modules.
    \begin{enumerate}
        \item If $\Tor^R_{>0}(C,N)=0$, then there exists a first quadrant spectral sequence $$\E^2_{p,q}\cong \Tor^R_p(\H_q(P_\b\otimes_RC),N)\Longrightarrow \H_{p+q}(P_\b\otimes_RC\otimes_RN).$$
        \item If $\Ext^{>0}_R(C,N)=0$, then there exists a third quadrant spectral sequence 
        $$\E^{p,q}_2\cong \Ext^p_R(\H_q(P_\b\otimes_RC),N)\Longrightarrow \H^{p+q}\Hom_R(P_\b,\Hom_R(C,N)).$$
    \end{enumerate}
\end{lemma}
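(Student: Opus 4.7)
The plan is to apply the standard hyperhomology spectral sequences to the bounded below complex $X_\b\colonequals P_\b\otimes_RC$ and then use the vanishing hypotheses to replace the derived bifunctors $\lotimes_R$ and $\RHom_R$ by their underived counterparts.

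First I would observe that since each $P_i$ is a direct summand of a free module $R^{(\Lambda_i)}$, the module $X_i=P_i\otimes_R C$ is a direct summand of $C^{(\Lambda_i)}$. Because $\Tor$ commutes with arbitrary direct sums and passes to summands, the hypothesis $\Tor^R_{>0}(C,N)=0$ of (1) forces $\Tor^R_{>0}(X_i,N)=0$ for every $i$; dually, since $\Ext^j_R(C^{(\Lambda_i)},N)\cong\prod_{\Lambda_i}\Ext^j_R(C,N)$, the hypothesis $\Ext^{>0}_R(C,N)=0$ of (2) forces $\Ext^{>0}_R(X_i,N)=0$ for every $i$. These termwise acyclicities yield canonical quasi-isomorphisms $X_\b\otimes_R N\simeq X_\b\lotimes_R N$ and $\Hom_R(X_\b,N)\simeq\RHom_R(X_\b,N)$, respectively.

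For (1), I would choose a Cartan-Eilenberg projective resolution $Q_{\star,\star}\to X_\b$, apply $-\otimes_R N$ to obtain a double complex whose totalisation computes $X_\b\lotimes_R N$, and take the standard filtration by columns; this yields the spectral sequence $\E^2_{p,q}=\Tor^R_p(\H_q(X_\b),N)\Rightarrow\H_{p+q}(X_\b\lotimes_R N)$, which is first quadrant and therefore convergent since $X_\b$ is bounded below. The $\Tor$-acyclicity noted above then identifies the abutment with $\H_{p+q}(P_\b\otimes_RC\otimes_RN)$. For (2), applying $\Hom_R(-,N)$ to the same Cartan-Eilenberg resolution produces the analogous spectral sequence $\E_2^{p,q}=\Ext^p_R(\H_q(X_\b),N)\Rightarrow \H^{p+q}\RHom_R(X_\b,N)$, and the Ext-acyclicity together with the tensor-hom adjunction $\Hom_R(P_\b\otimes_RC,N)\cong\Hom_R(P_\b,\Hom_R(C,N))$ rewrites the abutment in the form stated in the lemma.

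The one subtle point I foresee is ensuring that the hypotheses on $\Tor^R_{>0}(C,N)$ and $\Ext^{>0}_R(C,N)$ yield termwise, and not merely hypercohomological, acyclicity of $X_\b$ against $N$; this is precisely what the direct-summand-of-free observation supplies, and it is crucial for identifying the abutments with the underived expressions appearing in the statement. Convergence of both spectral sequences is automatic from the fact that $P_\b$, and hence $X_\b$, is bounded below.
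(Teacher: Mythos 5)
Your proof is correct but takes a genuinely different route from the paper's, and the difference is worth noting. The paper fixes the complex $X_\b \colonequals P_\b\otimes_RC$ and resolves $N$: it chooses a projective resolution $F_\b\to N$, forms the double complex $X_\b\otimes_RF_\b$, reads the stated $\E^2$ page off one of its two filtrations, and then identifies the abutment $\H_{p+q}(X_\b\otimes_RF_\b)$ with $\H_{p+q}(X_\b\otimes_RN)$ through a chain of derived-category isomorphisms (associativity of $\lotimes$, flatness of $P_\b$, and the hypothesis $\Tor^R_{>0}(C,N)=0$ applied to $C\lotimes_RN$). You instead leave $N$ alone, take a Cartan--Eilenberg projective resolution of the complex $X_\b$, tensor it with $N$, and identify the abutment by the termwise observation that each $X_i$ is a direct summand of a direct sum of copies of $C$, so $\Tor^R_{>0}(C,N)=0$ propagates to $\Tor^R_{>0}(X_i,N)=0$, which is exactly what is needed for $X_\b\otimes_RN$ to compute $X_\b\lotimes_RN$. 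The paper's route is lighter on machinery (a plain projective resolution of a module rather than a Cartan--Eilenberg resolution of a complex); your route makes the abutment identification more concrete and applies symmetrically to part (2), whose proof the paper omits with the remark that it is analogous. The termwise-acyclicity observation you single out as the subtle point is, incidentally, the same one the paper invokes in the proof of \Cref{thm:flatext}, so it is entirely in the paper's spirit; both proofs are sound.
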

\begin{proof}
 We only prove (1). Let $F_\b$ be a projective resolution of $N$. Then the double complex $(P_\b\otimes_RC)\otimes_RF_\b$ induces a first quadrant spectral sequence $$\E^2_{p,q}\cong \Tor^R_p(\H_q(P_\b\otimes_RC),N)\Longrightarrow \H_{p+q}(P_\b\otimes_RC\otimes_RF_\b).$$ It remains to observe that $P_\b\otimes_RC\otimes_RF_\b$ is quasi-isomorphic to $P_\b\otimes_R(C\otimes_RN)$. Indeed, one has isomorphisms
    \begin{align*}
(P_\b\otimes_RC)\otimes_RF_\b&\cong
(P_\b\otimes_RC)\lotimes_RN\\
&\cong (P_\b\lotimes_RC)\lotimes_RN\\
&\cong P_\b \lotimes_R(C\lotimes_RN)\\
&\cong P_\b \otimes_R(C\lotimes_RN)\\
&\cong P_\b \otimes_R(C\otimes_RN),
\end{align*} where the last isomorphism exists since $\Tor^R_{>0}(C,N)=0$ and $P_\b$ is a complex of flat modules. This finishes the proof.
\end{proof}

\begin{proposition}\label{ExtTor} 
    Let $M$, $N$ be $R$-modules, and assume $\cqpd{C}{R}M<\infty$. 
    \begin{enumerate} 
        \item Suppose $\Tor^R_{>0}(C,N)=0$. If $\Tor^R_{>> 0}(M,N)=0$, then $\Tor^R_i(M,N)=0$ for all $i>\cqpd{C}{R}M$.
        \item Suppose $\Ext^{>0}_R(C,N)=0$. If $\Ext^{>> 0}_R(M,N)=0$, then $\Ext^i_R(M,N)=0$ for all $i>\cqpd{C}{R}M$.
    \end{enumerate}
\end{proposition}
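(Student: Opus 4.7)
The plan is to use the two spectral sequences supplied by \Cref{lem:SpectralSeq}, applied to a $C$-quasi-projective resolution $P_\b$ of $M$ that achieves $\cqpd{C}{R}M$. The two parts are formally parallel, so I will describe part (1) in detail and then indicate the cohomological modification needed for part (2). By \Cref{minimal}, choose $P_\b$ with $d\colonequals\cqpd{C}{R}M=s-h$, where $s=\sup(P_\b\otimes_RC)$ and $h=\hsup(P_\b\otimes_RC)$, and set $a_q\ge 0$ so that $\H_q(P_\b\otimes_RC)\cong M^{\oplus a_q}$. By construction $a_q=0$ for $q>h$, and $a_h>0$.

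For part (1), \Cref{lem:SpectralSeq}(1) gives a first-quadrant-type spectral sequence
\[
E^2_{p,q}\cong \Tor^R_p(M,N)^{\oplus a_q}\Longrightarrow \H_{p+q}(P_\b\otimes_RC\otimes_RN).
\]
The abutment $\H_{p+q}(P_\b\otimes_RC\otimes_RN)$ vanishes whenever $p+q>s$, because $P_\b\otimes_RC\otimes_RN$ is concentrated in homological degrees at most $s$. Suppose toward a contradiction that some $\Tor^R_i(M,N)\ne 0$ with $i>d$. Using $\Tor^R_{\gg 0}(M,N)=0$, pick the largest such $i$. The step I consider the crux is analyzing the position $(p,q)=(i,h)$: I will check that both the outgoing differentials $d_r\colon E^r_{i,h}\to E^r_{i-r,h+r-1}$ and the incoming differentials $d_r\colon E^r_{i+r,h-r+1}\to E^r_{i,h}$ vanish for every $r\ge 2$. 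Outgoing ones die because $h+r-1>h$ forces $a_{h+r-1}=0$; incoming ones die because $i+r>i$ forces $\Tor^R_{i+r}(M,N)=0$ by maximality of $i$. Hence $E^\infty_{i,h}=E^2_{i,h}=\Tor^R_i(M,N)^{\oplus a_h}\ne 0$, while $E^\infty_{i,h}$ is a subquotient of $\H_{i+h}(P_\b\otimes_RC\otimes_RN)=0$ (using $i+h>s$), giving the desired contradiction.

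Part (2) proceeds identically with the cohomological spectral sequence of \Cref{lem:SpectralSeq}(2),
\[
E_2^{p,q}\cong \Ext^p_R(M,N)^{\oplus a_q}\Longrightarrow \H^{p+q}\Hom_R(P_\b,\Hom_R(C,N)).
\]
Now the abutment is zero for $p+q>s$, since $\Hom_R(P_\b,\Hom_R(C,N))$ is a bounded cochain complex supported in degrees at most $\sup P_\b=s$. Taking $i>d$ largest with $\Ext^i_R(M,N)\ne 0$, the cohomological differentials $d_r\colon E_r^{p,q}\to E_r^{p+r,q-r+1}$ out of $(i,h)$ land in Ext-degree $i+r>i$, which vanishes by maximality; the incoming differentials come from $(i-r,h+r-1)$, which has coefficient $a_{h+r-1}=0$. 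So $E_\infty^{i,h}=E_2^{i,h}\ne 0$ while the abutment at $p+q=i+h>s$ is zero, a contradiction.

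The main obstacle I anticipate is purely a bookkeeping one: verifying that the bidegrees of the differentials and the boundedness range of the abutment line up correctly in both the homological and cohomological indexings so that the position $(i,h)$ is genuinely insulated from nonzero differentials on both sides, and that $i+h$ really lies beyond the range where the abutment can be nonzero. Once those boundedness facts, which ultimately come down to $\sup P_\b=s$ and $\hsup(P_\b\otimes_RC)=h$, are in hand, the contradiction drops out immediately from the maximal-$i$ trick.
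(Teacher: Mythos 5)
Your argument is correct and follows essentially the same route as the paper: use the spectral sequence of \Cref{lem:SpectralSeq} with $E^2_{p,q}\cong\Tor_p^R(M,N)^{\oplus a_q}$, observe that at $(n,h)$ (with $n$ the largest nonvanishing Tor index and $h=\hsup(P_\b\otimes_RC)$) all differentials in and out vanish — the outgoing ones because $a_{q}=0$ for $q>h$ and the incoming ones because $\Tor_{>n}^R(M,N)=0$ — so $E^\infty_{n,h}=E^2_{n,h}\neq0$, forcing $n+h\le\sup(P_\b\otimes_RC)$ and hence $n\le\cqpd{C}{R}M$. The paper phrases this directly rather than by contradiction, but the content is identical, and your parallel treatment of part (2) with the cohomological spectral sequence is also the intended one.
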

\begin{proof}
    We only prove (1). Let $P_\b$ be a $C$-quasi projective resolution of $M$ such that 
    \[
    \cqpd{C}{R}M=\sup P_\b\otimes_R C - \hsup P_\b\otimes_RC,
    \]
   by \Cref{lem:SpectralSeq} there is a first quadrant spectral sequence  $$\E^2_{p,q}\cong \Tor^R_p(\H_q(P_\b\otimes_RC),N)\Longrightarrow \H_{p+q}(P_\b\otimes_RC\otimes_RN).$$ Set $h=\hsup P_\b\otimes_R C$, $s=\sup P_\b$, and $n=\sup\{i| \Tor^i_R(M,N)\neq0\}$. Since differentials on $\E^2$ page are of bidegree $(-2,1)$, one has $\E^{\infty}_{n,h}\cong \E^2_{n,h}\neq 0$. Hence we must have $n+h\leq s$, and so that, $n\leq \cqpd{C}{R}M$. 
\end{proof}

The following Proposition refines \Cref{prop:3.3} in the local case
\begin{proposition}
Let $R$ be a local ring, and let $M$,$N$ be finitely generated $R$-modules. Let $C$ be a semidualizing $R$-module.
\begin{enumerate}
\item If $\cqpd{C}{R}M,\cqpd{C}{R}N<\infty$, then
\[
\cqpd{C}{R}(M\oplus N)=\sup\{\cqpd{C}{R}M,\cqpd{C}{R}N\}.
\] 
\item If $M\neq0, M\in\B{C}(R)$ and there is a short exact sequence $0\rightarrow N \rightarrow P\otimes_RC\rightarrow M\rightarrow0$, with $P$ a projective $R$-module, then
\[
\cqpd{C}{R} N\leq\sup\{\cqpd{C}{R}M-1,0\},
\]
and the equality holds if the right-hand side is finite.
\end{enumerate}
\end{proposition}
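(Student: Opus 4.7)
For part $(1)$, the inequality ``$\leq$'' is exactly Proposition~\ref{prop:3.3}$(2)$. For the reverse inequality, the upper bound already established guarantees $\cqpd{C}{R}(M\oplus N)<\infty$, so the Auslander--Buchsbaum formula of Theorem~\ref{thm:AB} applies to each of $M$, $N$, and $M\oplus N$. Combining with the standard identity $\depth(M\oplus N)=\inf\{\depth M,\depth N\}$ then yields
\[
\cqpd{C}{R}(M\oplus N)=\depth R-\depth(M\oplus N)=\sup\{\depth R-\depth M,\depth R-\depth N\}=\sup\{\cqpd{C}{R}M,\cqpd{C}{R}N\},
\]
finishing part $(1)$.

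For part $(2)$, the strategy is to reduce the $C$-quasi-projective statement to the ordinary quasi-projective one via the transfer Theorem~\ref{thm:cqpd=qpd}. Since $M\in\B{C}(R)$ by hypothesis and $P\otimes_RC\in\B{C}(R)$ by \cite[Corollary 2.9(a)]{TakWhite}, the two-out-of-three property of the Bass class gives $N\in\B{C}(R)$; in particular $\Ext^1_R(C,N)=0$. Applying $\Hom_R(C,-)$ to the given short exact sequence therefore produces an exact sequence
\[
0\to \Hom_R(C,N)\to P\to\Hom_R(C,M)\to 0
\]
with $P$ projective. By Theorem~\ref{thm:cqpd=qpd}, the $C$-quasi-projective dimensions coincide with the ordinary quasi-projective dimensions of $\Hom_R(C,N)$ and $\Hom_R(C,M)$, so the desired inequality follows from the analogous syzygy bound for ordinary quasi-projective dimension already recorded in \cite[Proposition 3.3(4)]{qpd}.

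The equality statement when the right-hand side is finite is expected to be the most delicate step. The natural approach is to work in the local finitely generated setting, where Theorem~\ref{thm:AB} is available and $\depth(P\otimes_RC)=\depth R$ for any free $P$. The depth lemma applied to $0\to N\to P\otimes_RC\to M\to 0$ then forces $\depth N=\depth M+1$ when $\depth M<\depth R$ and $\depth N=\depth R$ when $\depth M=\depth R$; feeding these into Theorem~\ref{thm:AB} produces $\cqpd{C}{R}N=\sup\{\cqpd{C}{R}M-1,0\}$. The main obstacle is ensuring the depth lemma yields equalities rather than mere inequalities, which the case split on whether $\depth M$ equals $\depth R$ is designed to achieve.
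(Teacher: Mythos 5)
Part (1) of your proposal is essentially the paper's argument verbatim: use Proposition~\ref{prop:3.3}(2) for the upper bound and finiteness, then apply Theorem~\ref{thm:AB} together with $\depth(M\oplus N)=\inf\{\depth M,\depth N\}$.

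For part (2), there are two points worth flagging. First, the route you propose for the inequality (passing to $0\to\Hom_R(C,N)\to P\to\Hom_R(C,M)\to 0$ via the Bass class and the transfer Theorem~\ref{thm:cqpd=qpd}, then citing \cite[Proposition~3.3(4)]{qpd}) is exactly how this paper proves its own Proposition~\ref{prop:3.3}(4) — but that argument yields only $\cqpd{C}{R}N\leq\cqpd{C}{R}M$, without the ``$-1$'' or the $\sup\{\cdot,0\}$; the cited result from \cite{qpd} does not deliver the sharper bound you attribute to it. The paper instead obtains the refined inequality from the depth lemma combined with Theorem~\ref{thm:AB}, which is precisely the argument you outline in your final paragraph. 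So your depth-lemma argument is the load-bearing one, and the transfer route should be used only to establish finiteness of $\cqpd{C}{R}N$ (which is what the paper does, via Proposition~\ref{prop:3.3}(4)). Second, in the case $\depth M=\depth R$ your claim that the depth lemma ``forces'' $\depth N=\depth R$ is not quite right: the depth lemma gives only $\depth N\geq\depth R$. The reverse bound $\depth N\leq\depth R$ must come from the fact that $\cqpd{C}{R}N$ is finite and nonnegative (for $N\neq 0$), so that Theorem~\ref{thm:AB} forces $\depth R-\depth N=\cqpd{C}{R}N\geq 0$. With that supplement your depth argument correctly recovers both the inequality and the equality claim, matching the paper.
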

\begin{proof}
\begin{enumerate}
\item We may assume that both $M$ and $N$ are nonzero. The following equalities follow from \Cref{thm:AB} and \Cref{prop:3.3}(2)
\begin{align*}
\cqpd{C}{R}(M\oplus N)&=\depth R-\depth(M\oplus N)\\
&=\depth R-\inf\{\depth M,\depth N\}\\
&=\sup\{\depth R-\depth M,\depth R-\depth N\}\\
&=\sup\{\cqpd{C}{R}M,\cqpd{C}{R}N\}.
\end{align*}
\item By \Cref{prop:3.3}(4) it follows that $\cqpd{C}{R}N<\infty$, therefore we can apply \Cref{thm:AB}
\begin{align*}
\cqpd{C}{R}N&=\depth R-\depth N\\
&\leq\depth R-\inf\{\depth M+1,\depth(P\otimes_RC)\}\\
&=\sup\{\depth R-\depth M-1,0\}\\
&=\sup\{\cqpd{C}{R}M-1,0\},
\end{align*}
where the inequality follows from the depth lemma, while the third equality follows from the equality $\depth(P\otimes_RC)=\depth R$.\qedhere
\end{enumerate}
\end{proof}

We now show that the $C$-quasi-projective dimension of a module coincides with the $C$-projective dimension whenever the latter is finite.

\begin{corollary}\label{cor:cqpd=cpd}
Let $M$ be a finitely generated module over a (not necessarily local) ring $R$. Let $C$ be a semidualizing module. If $\cpd{C}{R}M<\infty$, then $\cqpd{C}{R}M=\cpd{C}{R}M$.
\end{corollary}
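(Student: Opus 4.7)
The plan is to combine the local--global principle of \Cref{lem:cpdloc} with the Auslander--Buchsbaum formula for $\cqpd{C}{R}$ (\Cref{thm:AB}) and for $\cpd{C}{R}$. The inequality $\cqpd{C}{R}M\le\cpd{C}{R}M$ is already noted in the paper (via \cite[Corollary 2.10(a)]{TakWhite}), so only the reverse inequality requires proof.

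By \Cref{lem:cpdloc} we have
\[
\cpd{C}{R}M=\sup\{\cpd{C_\p}{R_\p}M_\p\mid\p\in\Spec R\},
\]
so it suffices to prove $\cpd{C_\p}{R_\p}M_\p\le\cqpd{C}{R}M$ for every $\p\in\Spec R$ with $M_\p\ne 0$. Fix such a prime. Since $R\to R_\p$ is flat, the hypotheses of \Cref{thm:flatext} hold for this base change, and we get
\[
\cqpd{C_\p}{R_\p}M_\p\le\cqpd{C}{R}M.
\]

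It then remains to show that in the local ring $R_\p$ one has $\cpd{C_\p}{R_\p}M_\p=\cqpd{C_\p}{R_\p}M_\p$. Note both quantities are finite: $\cpd{C_\p}{R_\p}M_\p<\infty$ by \Cref{lem:cpdloc} applied to the hypothesis, and then $\cqpd{C_\p}{R_\p}M_\p<\infty$ by the already-known inequality. By \Cref{thm:AB},
\[
\cqpd{C_\p}{R_\p}M_\p=\depth R_\p-\depth_{R_\p}M_\p.
\]
On the other hand, since $\cpd{C_\p}{R_\p}M_\p<\infty$ one has $M_\p\in\B{C_\p}(R_\p)$ by \cite[Corollary 2.9(a)]{TakWhite}, and \cite[Theorem 2.11(c)]{TakWhite} gives $\cpd{C_\p}{R_\p}M_\p=\pd_{R_\p}\Hom_{R_\p}(C_\p,M_\p)$, which is finite. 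The classical Auslander--Buchsbaum formula combined with \cite[Lemma 3.9]{AiTak} (applicable because $\Ext^{>0}_{R_\p}(C_\p,M_\p)=0$ for $M_\p\in\B{C_\p}(R_\p)$) then yields
\[
\cpd{C_\p}{R_\p}M_\p=\depth R_\p-\depth_{R_\p}\Hom_{R_\p}(C_\p,M_\p)=\depth R_\p-\depth_{R_\p}M_\p.
\]
Comparing, $\cpd{C_\p}{R_\p}M_\p=\cqpd{C_\p}{R_\p}M_\p\le\cqpd{C}{R}M$, and taking the supremum over $\p$ completes the proof.

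I expect no real obstacle: every ingredient (the localization lemma for $\cpd{C}{R}$, the flat-base-change inequality for $\cqpd{C}{R}$, and the two Auslander--Buchsbaum formulas) is already in place in earlier sections, and the argument is just the standard reduction to the local case followed by equating both sides with $\depth R_\p-\depth_{R_\p}M_\p$.
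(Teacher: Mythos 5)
Your proof is correct and takes essentially the same route as the paper: reduce to the local case via \Cref{lem:cpdloc}, compare both dimensions to $\depth R_\p - \depth_{R_\p}M_\p$ using \Cref{thm:AB} on the $\cqpd{C}{}$ side and Takahashi--White plus the classical Auslander--Buchsbaum formula on the $\cpd{C}{}$ side. The only cosmetic difference is that the paper fixes a single $\p$ attaining the supremum in \Cref{lem:cpdloc} and chains inequalities, whereas you run the argument over all $\p$ and take the supremum at the end; you are also slightly more explicit in spelling out that $\cpd{C_\p}{R_\p}M_\p=\depth R_\p-\depth_{R_\p}M_\p$, which the paper leaves implicit.
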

\begin{proof}
By definition of $C$-quasi-projective dimension it follows that $\cqpd{C}{R}M\leq\cpd{C}{R}M$. By \Cref{lem:cpdloc}, there exists $\p\in\Spec R$ such that $\cpd{C_\p}{R_\p}M_\p=\cpd{C}{R}M$. By \Cref{thm:AB} one has the following string of (in)equalities
\begin{align*}
\cpd{C}{R}M&\geq \cqpd{C}{R}M\\
&\geq\cqpd{C_\p}{R_\p}M_\p\\
&=\depth R_\p-\depth_{R_\p}M_\p\\
&=\cpd{C_\p}{R_\p}M_\p\\
&=\cpd{C}{R}M.
\end{align*}

Here, the second inequality is true because localization is an exact functor and sends $C$-projective modules to $C_{\q}$-projective modules. 
Therefore all inequalities in the above display are equalities, yielding $\cqpd{C}{R}M=\cpd{C}{R}M$.
\end{proof}

\begin{comment}
We take this opportunity to generalize the New Intersection Theorem for complexes of $C$-projective modules proved by Takahashi and White \cite[Proposition 5.2]{TakWhite}

\begin{theorem}
Let $R$ be a noetherian local ring with a semidualizing $R$-module $C$. Let $I$ be an ideal of $R$. Let $X$ be a non-exact complex
\[
X:=0\rightarrow C^{\alpha_s}\rightarrow C^{\alpha_{s-1}}\rightarrow\cdots\rightarrow C^{\alpha_1}\rightarrow C^{\alpha_0}\rightarrow0
\]
such that $\H_i(X)$ is $I$-torsion for $i\geq0$, then $s\geq\dim R-\dim R/I$.
\end{theorem}

\begin{proof}
The complex $\Hom_R(C,X)$ is non-exact, otherwise it would be split and therefore the complex $$C\otimes_R\Hom_R(C,X)\cong X$$ would be exact.

Let $\p\in\Spec R$ such that $I\not\subseteq\p$, then $X_\p$ is exact. Therefore one can split $X_\p$ into short exact sequences where all the terms belong to $\B{C_\p}(R_\p)$ since belonging to the Bass class has the two-out-of-three property. It follows that these short exact sequences remain exact if the functor $\Hom_{R_\p}(C_\p,-)$ is applied. By splicing these short exact sequences together one gets that the complex $\Hom_{R_\p}(C_\p,X_\p)$ is exact. Therefore the complex $\Hom_R(C,X)$ has $I$-torsion homology in all nonnegative degrees. The assertion of the theorem now follows from \cite[Theorem 2.2]{IntThm}.
\end{proof}
\end{comment}

The following Proposition generalizes \cite[Proposition 3.5(3)]{qpd}.
\begin{proposition}
Let $R$ be a local ring and $\x$ an $R$-regular sequence of length $c$. Let $C$ be a semidualizing $R$-module. If $M$ is a finitely generated $R/\x$-module with $M\in\B{C}(R)$, then
\[
\cqpd{C}{R}M\leq\cqpd{(C/\x C)}{R/\x}M+c.
\]

\end{proposition}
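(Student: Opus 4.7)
My plan is to use the transfer formula \Cref{thm:cqpd=qpd} on both sides of the inequality to convert it into a statement about the ordinary quasi-projective dimension of $\Hom_R(C,M)$, and then invoke the already-established regular-sequence inequality \cite[Proposition 3.5(3)]{qpd}. This mirrors the reduction carried out in the proof of \Cref{deform}.

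We may assume $\cqpd{(C/\x C)}{R/\x}M<\infty$, otherwise there is nothing to show. The first step is to establish that $M$ lies in $\B{C/\x C}(R/\x)$: since $\x$ is $R$-regular, $R/\x$ has finite flat dimension over $R$, hence $R/\x\in\A{C}(R)$ by \cite[Proposition 3.1.9]{KeriSD}, and then \cite[Proposition 3.4.6]{KeriSD} together with the hypothesis $M\in\B{C}(R)$ gives $M\in\B{C/\x C}(R/\x)$. Moreover, because $\x M=0$, there is a canonical isomorphism $\Hom_{R/\x}(C/\x C,M)\cong\Hom_R(C,M)$ of $R/\x$-modules.

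By \Cref{thm:cqpd=qpd} applied over $R$ and over $R/\x$ respectively, we have
\[
\cqpd{C}{R}M=\qpd{R}\Hom_R(C,M),\qquad \cqpd{(C/\x C)}{R/\x}M=\qpd{R/\x}\Hom_{R/\x}(C/\x C,M)=\qpd{R/\x}\Hom_R(C,M).
\]
Thus the inequality we need reduces to
\[
\qpd{R}\Hom_R(C,M)\le \qpd{R/\x}\Hom_R(C,M)+c,
\]
which is precisely \cite[Proposition 3.5(3)]{qpd} applied to the $R/\x$-module $\Hom_R(C,M)$ together with the $R$-regular sequence $\x$.

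The only potential subtlety is ensuring that $\Hom_R(C,M)$ is genuinely an $R/\x$-module (immediate, since $\x$ annihilates $M$) and that the membership $M\in\B{C/\x C}(R/\x)$ is legitimate; both are handled by standard Bass-class transfer along a quasi-deformation of finite flat dimension, exactly as in \Cref{deform}. No other obstacle is expected.
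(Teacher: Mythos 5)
Your proof is correct and takes essentially the same route as the paper's: reduce both $C$-quasi-projective dimensions to ordinary quasi-projective dimensions of $\Hom_R(C,M)$ via \Cref{thm:cqpd=qpd} (after verifying $M\in\B{C/\x C}(R/\x)$ from $M\in\B{C}(R)$ and $R/\x\in\A{C}(R)$, and identifying $\Hom_{R/\x}(C/\x C,M)\cong\Hom_R(C,M)$), then invoke \cite[Proposition 3.5(3)]{qpd}. The only cosmetic difference is that you state the two applications of the transfer formula as explicit equalities, whereas the paper phrases the same step as an equivalence of inequalities.
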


\begin{proof}
We can assume that $M\neq0$ and that $\cqpd{(C/\x C)}{R/\x}M<\infty$. By \cite[Corollary 3.4.2]{KeriSD} the module $C/\x C$ is semidualizing over $R/\x$. By \cite[Proposition 3.4.6]{KeriSD} it follows that $M\in\B{C/\x C}(R/\x)$ since $R/\x$ has finite flat dimension over $R$ and therefore $R/\x\in\A{C}{(R)}$. Hence, by \Cref{thm:cqpd=qpd} the inequality in the statement is equivalent to
\[
\qpd{R}\Hom_R(C,M)\leq\qpd{R/\x}\Hom_{R/\x}(C/\x C,M)+c.
\]
Since $\Hom_{R/\x}(C/\x C,M)\cong\Hom_R(C,M)$ this inequality follows from \cite[Proposition 3.5(3)]{qpd}.
\end{proof}

\section{Applications of the $C$-quasi-projective dimension}

\begin{definition}
Let $C$ be a semidualizing $R$-module and $M$ an $R$-module. We say that $M$ is $C$-\emph{virtually small} if
\[
\thick_RC\cap\thick_RM\neq\{0\}.
\]
\end{definition}

The next result is a semidualizing version of \cite[Proposition 3.11]{qpd}.

\begin{proposition}\label{cor:VirtSmall}
Let $M$ be a finitely generated nonzero $R$-module. If $\cqpd{C}{R}M<\infty$, then $M$ is $C$-virtually small.
\end{proposition}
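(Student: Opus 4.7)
My plan is to produce a single nonzero object of $\D(R)$ lying simultaneously in $\thick_R C$ and $\thick_R M$; the natural candidate is $X_\b \colonequals P_\b \otimes_R C$ for a well-chosen $C$-quasi-projective resolution $P_\b$ of $M$.

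First, I would invoke \Cref{minimal}(1) to choose $P_\b$ to be a perfect $R$-complex, so that $P_\b \in \thick_R R$. Since $P_\b$ is a bounded complex of projective modules, $X_\b$ is naturally quasi-isomorphic to $P_\b \lotimes_R C$, and the derived tensor functor $-\lotimes_R C \colon \D(R) \to \D(R)$ is triangulated and sends $R$ to $C$. Any such functor carries the thick subcategory generated by an object into the thick subcategory generated by its image, so $X_\b \in \thick_R C$. The definition of $C$-quasi-projective resolution requires $X_\b$ to be non-acyclic, hence $X_\b \not\simeq 0$ in $\D(R)$.

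Second, I would show $X_\b \in \thick_R M$. By the choice of $P_\b$, each $\H_i(X_\b)$ is either zero or isomorphic to $M^{\oplus a_i}$ for some $a_i > 0$, and only finitely many $a_i$ are nonzero. Since thick subcategories are closed under finite direct sums and direct summands, each such $M^{\oplus a_i}$ belongs to $\thick_R M$. Because $X_\b$ is homologically bounded, I would argue by induction on the homological amplitude $\hsup X_\b - \hinf X_\b$, using the standard soft-truncation exact triangle at the topmost homological degree to peel off one homology module at a time; closure of $\thick_R M$ under exact triangles and shifts then yields $X_\b \in \thick_R M$. Combining the two steps, $X_\b$ is a nonzero object of $\thick_R C \cap \thick_R M$, and hence $M$ is $C$-virtually small.

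The main technical point is the truncation argument in the second step: it is a standard fact that a homologically bounded complex lies in the thick subcategory generated by its homology modules, but I would want to state the induction cleanly by using the soft-truncation triangle so that the third vertex is a shift of a single homology module of the form $M^{\oplus a_i}$.
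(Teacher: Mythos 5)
Your argument is correct and follows essentially the same route as the paper's proof: take a perfect $C$-quasi-projective resolution $P_\b$ via \Cref{minimal}, observe $P_\b\otimes_R C\in\thick_R C$ since $P_\b$ is perfect, and deduce $P_\b\otimes_R C\in\thick_R M$ because its homologies are finite direct sums of copies of $M$. The only cosmetic difference is that the paper cites \cite[3.10]{DGI} for the last step, whereas you re-derive that fact inline via the standard soft-truncation induction on homological amplitude.
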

\begin{proof}
By \Cref{minimal} there exists a non-acyclic perfect complex $P$ such that the homologies of $P\otimes_R C$ are finite direct sum of copies of $M$. By \cite[3.10]{DGI} we get $P\otimes_R C\in \thick_R(M)$. Also, clearly, $P\otimes_R C\in \thick_R C$, which finishes the claim. 
\end{proof}

We will use the next Lemma in one of the results below.

\begin{lemma} \label{lem:DVirtSmall} Let $R$ be a local ring, and let $Y\in \D^f_b(R)$ of finite injective dimension. If $M$ is a perfect complex such that $\thick_R(Y)\cap\thick_R(M)\neq\{0\}$, then $R$ is Gorenstein.   
\end{lemma}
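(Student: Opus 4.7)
The plan is to extract a single witness object from the intersection and apply Foxby's characterization of Gorenstein local rings in terms of complexes that are simultaneously of finite projective and finite injective dimension. By the hypothesis $\thick_R(Y)\cap\thick_R(M)\neq\{0\}$, I would begin by picking a nonzero $X\in\thick_R(Y)\cap\thick_R(M)$.

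The first observation is that since $M$ is a perfect complex, $M\in\thick_R(R)$, hence $\thick_R(M)\subseteq\thick_R(R)$. Therefore $X$ is itself perfect, and in particular $\pd_R X<\infty$. The dual observation is that the class of objects of $\D^f_b(R)$ of finite injective dimension is closed under shifts, mapping cones of morphisms, direct summands, and isomorphisms in $\D(R)$, and so forms a thick subcategory. Since $\id_R Y<\infty$ this thick subcategory contains $Y$, and hence contains $\thick_R(Y)$; in particular $\id_R X<\infty$.

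At this point $X$ is a nonzero object of $\D^f_b(R)$ satisfying both $\pd_R X<\infty$ and $\id_R X<\infty$. By Foxby's theorem, a local ring admitting such a complex must be Gorenstein, which yields the desired conclusion. The only nontrivial input is this Foxby-type criterion: in its classical form for finitely generated modules it is well known, and the extension to $\D^f_b(R)$ can either be cited from \cite{gbook} or deduced from the module version by passing to a nonzero homology module of $X$ in an extremal degree (which inherits finite projective dimension from $X$ perfect, and finite injective dimension via the standard Bass/depth formulae for complexes of finite injective dimension).
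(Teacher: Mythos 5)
Your main argument coincides with the paper's: pick a nonzero $X\in\thick_R(Y)\cap\thick_R(M)$, observe that $X$ has finite projective dimension because $\thick_R(M)\subseteq\thick_R(R)$ (so $X$ is perfect) and finite injective dimension because the objects of $\D^f_b(R)$ of finite injective dimension form a thick subcategory containing $Y$, and then invoke the Foxby-type criterion for complexes. The paper cites \cite[Proposition 2.10]{foxflat} for this last step, which is exactly the first of the two options you mention.

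The parenthetical alternative you offer, however, is not sound. Passing to a nonzero homology module of $X$ in an extremal degree does not reduce the complex statement to the module statement: a homology module of a perfect complex, even in the top or bottom nonvanishing degree, need not have finite projective dimension. For example, over $R=k[x]/(x^2)$ the perfect complex $(R\xra{x}R)$ has $\H_0\cong\H_1\cong k$, of infinite projective dimension. Likewise, the extremal homology module of a complex of finite injective dimension need not itself have finite injective dimension. So if one wished to avoid citing the complex-level criterion, a genuine argument (for instance via depth and width formulas for complexes) would be required; simply passing to homology does not do the job. Since the complex version is standard and the paper cites it directly, the cleanest fix is to drop the parenthetical and keep the citation.
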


\begin{proof} Let $0\neq X \in \thick_R(Y)\cap\thick_R(M)$. Then, $X$ has both finite projective and injective dimension as $M$ and $Y$ do respectively. Now we are done by \cite[Proposition 2.10]{foxflat}. 
\end{proof}

\begin{remark}\label{rmk:omegapd}
By \cite[Exercise 3.3.28(b)]{BrunsHerzog} and \cite[Corollary 2.10(a)]{TakWhite} it follows that for a module $M$ over a Cohen-Macaulay ring $R$ with dualizing module $\omega_R$, one has $\cpd{\omega_R}{R}M<\infty$ if and only if $\mathrm{id}_R\;M<\infty$. The next theorem generalizes this observation.
\end{remark}
\begin{theorem}\label{thm:omegaqpd}
Let $R$ be a local commutative ring with a dualizing module $\omega_R$. Let $M$ be an $R$-module. Then the following statements hold.
\begin{enumerate}
    \item One has $\cqpd{\omega_R}{R}M<\infty$ if and only if $\qid_RM<\infty$.
    \item One has $\cqid{\omega_R}{R}M<\infty$ if and only if $\qpd{R}{M}<\infty$.
\end{enumerate}
\end{theorem}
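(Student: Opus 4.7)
My plan is to prove both parts by combining the transfer theorems of Section~4 with Foxby equivalence, using that the hypothesis forces $R$ to be Cohen--Macaulay of some finite Krull dimension $d$ with $\id_R\omega_R=d$.

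For the direction $\cqpd{\omega_R}{R}M<\infty\Rightarrow\qid_RM<\infty$ of (1), I would take a $\omega_R$-quasi-projective resolution $P_\b$ of $M$ and observe that the bounded complex $P_\b\otimes_R\omega_R$ has each term equal to a direct sum of copies of $\omega_R$, hence of injective dimension at most $d$. The classical analog of \Cref{lem:thick}(1) (for ordinary injective dimension in place of $\cid{C}{R}$) then yields a quasi-isomorphic bounded complex of injective $R$-modules $I_\b$, whose homology agrees with that of $P_\b\otimes_R\omega_R$, namely $M^{\oplus a_i}$; so $I_\b$ is a quasi-injective resolution of $M$.

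For the converse in (1), I would start with a quasi-injective resolution $I_\b$ of $M$ and first verify that $M\in\B{\omega_R}(R)$: every injective module lies in $\B{\omega_R}(R)$ (since it has finite injective dimension and $\omega_R$ is dualizing), and the Bass class is closed under direct summands and under passage to homology of bounded complexes of its members, by a splicing argument using the closure of $\B{\omega_R}(R)$ under extensions, kernels of surjections, and cokernels of injections. The dual of the spectral sequence of \Cref{lem:ExtSpectral}, combined with the vanishings $\Ext^{>0}_R(\omega_R,I_j)=0$ and $\Ext^{>0}_R(\omega_R,M)=0$, then collapses on the $p=0$ row to yield $\H_i(\Hom_R(\omega_R,I_\b))\cong\Hom_R(\omega_R,M)^{\oplus b_i}$. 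The main technical hurdle is to promote $\Hom_R(\omega_R,I_\b)$---which represents $\RHom_R(\omega_R,I_\b)$ in $\D(R)$---to a bounded complex of projective modules with the same homology: Foxby equivalence shows each term has finite flat dimension, and Jensen's theorem (every flat module over a Noetherian local ring of finite Krull dimension $d$ has projective dimension at most $d$) provides a uniform bound, so a Cartan--Eilenberg construction yields the desired bounded complex of projectives $Q_\b$. Since $\H_i(Q_\b)\cong\Hom_R(\omega_R,M)^{\oplus b_i}$, the complex $Q_\b$ is a quasi-projective resolution of $\Hom_R(\omega_R,M)$, and \Cref{thm:cqpd=qpd} delivers $\cqpd{\omega_R}{R}M<\infty$.

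Part (2) is proved by the completely dual argument: swap the roles of $\omega_R\otimes_R-$ and $\Hom_R(\omega_R,-)$, replace the Bass class by the Auslander class, and use the spectral sequence of \Cref{lem:ExtSpectralInj} in place of that of \Cref{lem:ExtSpectral}, concluding via the Auslander-class transfer for $\cqid{\omega_R}{R}$ (the second theorem labeled \texttt{auslanderclass}) in place of \Cref{thm:cqpd=qpd}. The main obstacle remains unchanged: the conversion of bounded complexes of modules of finite flat or finite injective dimension into bounded complexes of actual projectives or injectives, handled respectively by Jensen's theorem and by the classical analog of \Cref{lem:thick}(1).
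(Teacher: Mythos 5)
Your forward implications (in both parts) agree with the paper: tensoring the complex of projectives with $\omega_R$, or applying $\Hom_R(\omega_R,-)$ to the complex of injectives, produces a bounded complex of modules of finite injective (resp.\ finite projective) dimension, which one then replaces by a quasi-isomorphic bounded complex of injectives (resp.\ projectives).

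The converse of part~(1) contains a genuine error. You assert that $\qid_R M<\infty$ forces $M\in\B{\omega_R}(R)$, arguing that the Bass class is ``closed under passage to homology of bounded complexes of its members by a splicing argument.'' That closure property is false, and the paper's own example following \Cref{rmk:cqpdk} refutes exactly the statement you need: there $R$ is a non-Gorenstein artinian local ring, $E=\omega_R$ is the injective hull of $\kk$, and the module $M$ fitting in $0\to\kk\to E\to M\to 0$ satisfies $\qid_R M<\infty$ (so $M$ appears as the homology of a bounded complex of injectives, each of which does lie in $\B{E}(R)$), yet $M\notin\B{E}(R)$, since otherwise $\kk\in\B{E}(R)$ and hence $E\cong R$. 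The splicing cannot be initiated at either end of the complex because the boundary modules need not be in the Bass class. Without $M\in\B{\omega_R}(R)$ you have neither the vanishing $\Ext^{>0}_R(\omega_R,M)=0$ that collapses your spectral sequence nor the hypothesis of \Cref{thm:cqpd=qpd}, so the strategy fails. The same difficulty blocks your plan for part~(2): $\qpd{R}\kk<\infty$ by \Cref{rmk:cqpdk}, but $\kk\in\A{\omega_R}(R)$ would force $\omega_R\cong R$, so the Auslander-class transfer theorem does not apply.

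The paper's converse in (1) avoids Foxby classes entirely. One has quasi-isomorphisms
\[
I_\bullet\simeq\Hom_R(R,I_\bullet)\simeq\Hom_R(\Hom_R(\omega_R,\omega_R),I_\bullet)\simeq\omega_R\otimes_R\Hom_R(\omega_R,I_\bullet),
\]
and $\Hom_R(\omega_R,I_\bullet)$, being a bounded complex of modules of finite flat dimension, admits a finite projective resolution $P_\bullet$. Then $P_\bullet\otimes_R\omega_R$ is quasi-isomorphic to $I_\bullet$, so its homology is $M^{\oplus a_i}$, and $P_\bullet$ is an $\omega_R$-quasi-projective resolution of $M$ directly from \Cref{def:cqpd}, with no appeal to $M$ lying in $\B{\omega_R}(R)$. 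Reworking your converse in this way (and dually for part~(2)) closes the gap; the rest of your proposal, including the use of Jensen's theorem to bound projective dimensions, is a reasonable elaboration of what the paper gets from \cite[Remark 4.1]{IK}.
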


\begin{proof} We only prove (1), and a dual argument applies to (2).
First assume that $\cqpd{\omega_R}{R}M<\infty$. By hypothesis, there is a bounded complex of projective modules $P_{\bullet}$ such that $P_{\bullet}\otimes_R \omega_R$ is not acyclic and all the homologies are finite direct sum of copies of $M$. It follows from \cite[Remark 4.1]{IK} that $\id_R (P_{\bullet}\otimes_R \omega_R)<\infty$, hence there exists a bounded complex of injective modules $I_{\bullet}$ quasi-isomorphic to $P_{\bullet}\otimes_R\omega_R$. Then, $I_{\bullet}$ is a finite quasi-injective resolution of $M$ by definition.  

For the converse, assume that $\qid_RM<\infty$. Then there is a bounded complex $I_\bullet$ of injective modules that is not acyclic and all the homologies are finite direct sum of copies of $M$. Consider the following chain of quasi-isomorphisms
\[
I_\bullet\simeq\Hom_R(R,I_\bullet)\simeq\Hom_R(\Hom_R(\omega_R,\omega_R),I_\bullet)\simeq\omega_R\otimes_R\Hom_R(\omega_R,I_\bullet).
\]
where the last quasi-isomorphism is by \cite[4.5.13]{LarsBook}. By \cite[Remark 4.1]{IK} $\Hom_R(\omega_R,I_\bullet)$ has finite projective dimension. Let $P_\bullet$ be a finite projective resolution of $\Hom_R(\omega_R,I_\bullet)$. Then $P_\bullet$ is a $\omega_R$-quasi-projective resolution of $M$.
\end{proof}

\begin{remark} \Cref{def:cqpd} and \Cref{def:cqid} could be given for a semidualizing complex $C$, and with that, the proof of \Cref{thm:omegaqpd} would go through verbatim to establish the result with the dualizing module $\omega_R$ replaced by a dualizing complex. We postpone the study of quasi-homological dimensions of complexes with respect to semidualizing complexes to a future work.
\end{remark}

\begin{comment}
As a Corollary we deduce \cite[Corollary 4.3]{qid} for rings with a dualizing module

\begin{corollary}\label{cor:qid&pd}
Let $R$ be a local commutative ring with a dualizing module $\omega_R$. Let $M$ be a finitely generated $R$-module. If $\qid_{R}M<\infty$ and $\pd_RM<\infty$, then $R$ is Gorenstein.
\end{corollary}

\begin{proof}
By \Cref{thm:omegaqpd}, the module $M$ has finite $\omega_R$-quasi-projective dimension. Now \Cref{cor:VirtSmall} yields that $M$ is $\omega_R$-virtually small. The result now follows from \Cref{lem:DVirtSmall}.
\end{proof}

\begin{remark}
We point out that \Cref{cor:qid&pd} can be proved without assuming the existence of a dualizing module, hence providing an alternative proof of \cite[Corollary 4.3]{qid}. In order to achieve this one could define the $C$-quasi-projective dimension with respect to a semidualizing complex $C$ and prove \Cref{thm:omegaqpd} by using a dualizing complex instead of a dualizing module, with the same proof. Then the same proof as in  \Cref{cor:qid&pd} goes through by completing the ring (in view of \cite[Proposition 2.6(3)]{qid}) and thereby assuming the existence of a dualizing complex.  
\end{remark} 
\end{comment}

\begin{example}In this example we will show that  \Cref{prop:3.3}(4) does not hold if $Y\not\in\B{C}(R)$. 

Let $R$ be a ring with $\m^2=0$ which is not Gorenstein. Consider the following exact sequence
\[
0\rightarrow\Omega\rightarrow F\rightarrow E\rightarrow 0,
\]
where $F$ is free, $E$ is the injective hull of $\kk$ and $\Omega\subseteq\m F$. Let $(-)^\vee\colonequals\Hom_R(-,E)$ be the Matlis dual functor and consider the sequence
\[
0\rightarrow E^\vee\rightarrow F^\vee\rightarrow \Omega^\vee\rightarrow0.
\]
Since $\m^2=0$ it follows that $\Omega$ is a $k$-vector space and so is $\Omega^\vee$. Therefore, by \Cref{rmk:cqpdk}, one has that $\cqpd{E}{R}\Omega^\vee<\infty$. Since $F$ is free, it follows that $F^\vee$ is $E$-projective. We will show that $\cqpd{E}{R}E^\vee=\infty$. We point out that $E^\vee=R$. If $\cqpd{E}{R}R<\infty$, then $\qid_R R<\infty$ by \Cref{thm:omegaqpd}, which is a contradiction by \cite[Corollary 4.3]{qid}.
\end{example}

\Cref{prop:characteriz} generalizes the first half of \cite[Theorem 6.5]{qpd}. We recall that a finitely generated module $M$ is said to satisfy $(\widetilde S_n)$ if $\depth_{R_{\p}} M_{\p}\ge \inf\{n, \depth R_{\p}\}$ for all $\p \in \Spec(R)$. 

\begin{proposition}\label{prop:characteriz} Let $(R,\m,k)$ be local, $\depth R=t$. If there exists $n\ge t$ such that $\cqpd{C}{R} (C\otimes_R \Tr \syz^n_R k) <\infty$, then $R$ is Cohen--Macaulay, and $C$ is dualizing.      
\end{proposition}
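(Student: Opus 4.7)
Set $N \colonequals \Tr \syz^n_R k$ and $M \colonequals C \otimes_R N$; the hypothesis reads $\cqpd{C}{R} M < \infty$. My strategy is two-phase: first reduce to \cite[Theorem 6.5]{qpd} to obtain $R$ Cohen--Macaulay, then leverage $C$-virtual smallness to obtain $C$ dualizing.

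To begin, by \Cref{minimal} fix a minimal perfect $R$-complex $P_\b$ achieving $\cqpd{C}{R} M$, with $\H_i(P_\b \otimes_R C) \cong M^{\oplus a_i}$, and apply \Cref{thm:AB} to extract the Auslander--Buchsbaum identity $\cqpd{C}{R} M = \depth R - \depth_R M$. The plan for obtaining $R$ Cohen--Macaulay is to show $\qpd{R} N < \infty$ and invoke the first half of \cite[Theorem 6.5]{qpd}. The transfer \Cref{auslanderclass} reduces this to establishing $N \in \A{C}(R)$. For this I would exploit the spectral sequence of \Cref{lem:ExtSpectral},
\[
\Ext_R^p(C, \H_q(P_\b \otimes_R C)) \Rightarrow \H_{q-p}(P_\b),
\]
applied to the given $P_\b$: since each $\H_q(P_\b \otimes_R C)$ is a direct sum of copies of $M$, verifying $\Ext^{>0}_R(C, M) = 0$ together with $\Hom_R(C, M) \cong N$ collapses the spectral sequence onto the row $p=0$ and gives $\H_q(P_\b) \cong N^{\oplus a_q}$, so that $P_\b$ is itself a quasi-projective resolution of $N$.

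For the second phase, with $R$ Cohen--Macaulay in hand, it suffices to show $\id_R C < \infty$. By \Cref{cor:VirtSmall}, $M$ is $C$-virtually small, so there exists $0 \neq X \in \thick_R C \cap \thick_R M$. Since $M = C \otimes_R \Tr \syz^n_R k$ is built from the residue field through the minimal free resolution of $k$, and $R$ is Cohen--Macaulay with $\depth R = t$, a local-duality argument in the spirit of \Cref{lem:DVirtSmall} (but with $C$ playing the role of $Y$ and accounting for the fact that the ``small'' factor generates the bounded derived category of finite length modules) should force $C$ to have finite injective dimension and hence to be dualizing.

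The principal obstacle is the collapse of the spectral sequence, i.e.\ the membership $N \in \A{C}(R)$: this is precisely where the hypothesis $n \geq \depth R$ must be used essentially, presumably by showing that sufficiently high transposed syzygies of $k$ are Tor-orthogonal to $C$ and reflexive with respect to the evaluation $- \to \Hom_R(C, C \otimes_R -)$. Should this direct route prove elusive, a fallback is to mimic the proof of \cite[Theorem 6.5]{qpd} in the $C$-quasi setting by working with the perfect complex $P_\b$ directly (bypassing the transfer to $N$) and extracting the needed depth estimates from $P_\b \otimes_R C$ via \Cref{thm:AB}. A secondary obstacle is formalizing the injective-dimension argument in the last paragraph into a clean $C$-analogue of \Cref{lem:DVirtSmall}.
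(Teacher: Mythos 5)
There are genuine gaps in both phases of your plan, and the paper's proof proceeds by an entirely different mechanism.

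For the first phase, you want $N = \Tr\syz^n_R k \in \A{C}(R)$ so that \Cref{auslanderclass} transfers $\cqpd{C}{R}(C\otimes_R N)<\infty$ to $\qpd{R}N<\infty$. But there is no reason for this membership to hold: being in the Auslander class requires $\Tor^R_{>0}(C,N)=0$, $\Ext^{>0}_R(C,C\otimes_R N)=0$, and that the natural map $N\to\Hom_R(C,C\otimes_R N)$ be an isomorphism, and the hypothesis $n\ge t$ does not supply any of these. The transpose $\Tr\syz^n_R k$ is formed by dualizing a presentation matrix, and such a module is in general neither $\Tor$-orthogonal to $C$ nor $C$-reflexive; your own acknowledgment ("presumably by showing\ldots") marks this as an unproved assertion rather than a step. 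Without it, you cannot even set the first phase in motion, and you certainly cannot invoke \cite[Theorem 6.5]{qpd}. The fallback you mention — mimicking the proof of \cite[Theorem 6.5]{qpd} in the $C$-setting — is indeed the route the paper takes, but it is not a straightforward transcription: the paper needs $\syz^n_R k$ to satisfy $(\widetilde S_t)$ (this is where $n\ge t$ enters, via the depth lemma), plus the fact that $\syz^n_R k$ is free on the punctured spectrum, to establish $\Ext^{1\le i\le t}_R(\Tr\syz^n_R k, R)=0$ by \cite[Proposition 2.4(b)]{ds}, and then argues along the lines of \cite[Theorem 1.3]{yty} to obtain $\Ext^1_R(\syz^n_R k, C)=0$. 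None of this machinery appears in your sketch.

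The second phase has a logical circularity. You invoke \Cref{cor:VirtSmall} to conclude $C\otimes_R N$ is $C$-virtually small and then appeal to ``a local-duality argument in the spirit of \Cref{lem:DVirtSmall}.'' But \Cref{lem:DVirtSmall} already assumes $Y$ has finite injective dimension; that is precisely what you need to \emph{prove} about $C$, so the lemma cannot be applied in the direction you want. The paper sidesteps this entirely: once $\Ext^{n+1}_R(k,C)=0$ is in hand and $n+1>\depth C=\depth R$, Roberts' theorem \cite[II. Theorem 2]{rober} yields $\id_R C<\infty$ directly, which simultaneously forces $R$ Cohen--Macaulay and $C$ dualizing. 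The two conclusions are obtained in a single stroke, not in two separate phases, and the key inputs — the $(\widetilde S_t)$ estimate, the $\Ext$-orthogonality criterion from \cite{ds}, the transpose identity $\Tr\Tr M\cong M$ up to free summands, and Roberts' vanishing theorem — are absent from your proposal.
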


\begin{proof} Let $M:=\Tr \syz^n_R k$. Similar to the proof of \cite[Theorem 6.5]{qpd}, $C\otimes_R M$ embeds in an $R$-module of finite $C$-projective dimension. Since $n\ge t$, hence $\syz^n_R k$ satisfies $(\widetilde S_t)$ by the depth Lemma. Moreover, $\syz^n_R k$ is also locally free on punctured spectrum. Thus, $\Ext_R^{1\le i \le t}(M, R)=0$ by \cite[Proposition 2.4(b)]{ds}. Now following the same proof as in \cite[Theorem 1.3 (3)$\implies$  (1)]{yty}, we get $\Ext^1_R(\Tr M, C)=0$. Hence, $\Ext^1_R(\syz^n_R k, C)=0$, i.e., $\Ext^{n+1}_R(k,C)=0$. Since $n+1\geq t+1>\depth C$, we get $\id_R C<\infty$ by \cite[II. Theorem 2]{rober}. Hence, $R$ is Cohen--Macaulay and $C$ is dualizing. 
\end{proof}  

\begin{corollary}
Let $R$ be local. If every $R$-module has finite $C$-quasi-projective dimension, then $R$ is an AB ring. In particular R is Gorenstein and $C\cong R$.
\end{corollary}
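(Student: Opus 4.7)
The plan is to chain together \Cref{prop:characteriz}, \Cref{thm:omegaqpd}, and \Cref{ExtTor} into a four-step argument. Step 1: the hypothesis supplies $\cqpd{C}{R}(C \otimes_R \Tr \syz^n_R k) < \infty$ for every $n \geq \depth R$, so \Cref{prop:characteriz} yields at once that $R$ is Cohen--Macaulay and that $C$ is a dualizing module.

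Step 2: with $C$ identified as the dualizing module, I would apply \Cref{thm:omegaqpd}(1) to the hypothesis and conclude that every $R$-module has finite quasi-injective dimension. Specializing to $R$ itself gives $\qid_R R < \infty$, and then \cite[Corollary 4.3]{qid} (applicable since $\pd_R R = 0 < \infty$) forces $R$ to be Gorenstein. Step 3: a Gorenstein local ring is its own dualizing module, and the dualizing module is unique up to isomorphism in the local case, so $C \cong R$.

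Step 4: now that $C \cong R$, the $C$-quasi-projective dimension collapses to the ordinary quasi-projective dimension, and the orthogonality hypothesis $\Ext_R^{>0}(C, N) = 0$ in \Cref{ExtTor}(2) is automatic for every $N$. Thus, for any pair of finitely generated modules $M, N$ with $\Ext_R^{\gg 0}(M,N) = 0$, \Cref{ExtTor}(2) gives $\Ext^{i}_R(M,N) = 0$ for all $i > \qpd{R} M$. The bound depends only on $M$, which is precisely the uniform Ext-vanishing condition defining an AB ring in the sense of Huneke--Jorgensen.

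The main obstacle here is essentially bookkeeping rather than ingenuity: one must verify that each invoked result applies in the intended direction, and in particular that \Cref{thm:omegaqpd} only requires $C$ to be a dualizing module (which \Cref{prop:characteriz} supplies in Step 1) rather than requiring $R$ to be Gorenstein \emph{a priori}. Once the chain Cohen--Macaulay $\Rightarrow$ Gorenstein $\Rightarrow$ $C\cong R$ is closed, the uniform Ext-vanishing bound of \Cref{ExtTor}(2) yields the AB property for free, so no new homological calculation is needed.
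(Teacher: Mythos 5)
Your first two steps match the paper exactly: apply \Cref{prop:characteriz} to conclude that $R$ is Cohen--Macaulay and $C$ is dualizing, then apply \Cref{thm:omegaqpd} to conclude that every $R$-module has finite quasi-injective dimension. At that point the paper simply invokes \cite[Corollary 4.5]{qid}, which states that a local ring over which every module has finite quasi-injective dimension is an AB ring, and the Gorenstein property and the isomorphism $C\cong R$ then drop out as consequences. You instead take a longer route: establish Gorenstein via \cite[Corollary 4.3]{qid} applied to $R$ itself, deduce $C\cong R$ from uniqueness of the dualizing module, and then reconstruct the AB property directly from \Cref{ExtTor}(2). That route is workable, and it is instructive in that it shows \Cref{ExtTor} gives you the Ext-vanishing bound without further appeal to the quasi-injective theory, but it is not the argument the paper uses.

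There is, however, a genuine flaw in your Step 4. You write that the bound from \Cref{ExtTor}(2), namely $\qpd{R}M$, ``depends only on $M$, which is precisely the uniform Ext-vanishing condition defining an AB ring.'' That is not the AB condition. The Huneke--Jorgensen definition requires a \emph{single} constant, depending only on $R$, such that $\Ext^{\gg 0}_R(M,N)=0$ forces $\Ext^{>c}_R(M,N)=0$ for all pairs $(M,N)$; a bound varying with $M$ does not, on its face, satisfy this. Your argument can be repaired, but it needs one more step that you omit: since by hypothesis every finitely generated $M$ has $\qpd{R}M<\infty$, the Auslander--Buchsbaum formula (\Cref{thm:AB} with $C=R$, or \cite[Theorem 4.10]{qpd}) gives $\qpd{R}M=\depth R-\depth M\le \depth R$, so $c=\depth R$ is a uniform bound. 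With that observation inserted, your Step 4 does yield the AB property; without it, the step as written does not establish what you claim it does.
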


\begin{proof}
By \Cref{prop:characteriz} $R$ is Cohen-Macaulay and $C$ is dualizing. Therefore by \Cref{thm:omegaqpd} every module has finite quasi injective dimension. By \cite[Corollary 4.5]{qid} it follows that $R$ is AB.
\end{proof}

\begin{comment}
The next result generalizes \cite[Corollary 6.21]{qpd}.

\begin{theorem}
Let $R$ be a Cohen-Macaulay ring with a dualizing module $\omega_R$. Let $C$ be a semidualizing module. If $\cqpd{C}{R}\omega_R<\infty$, then $C\cong\omega_R$.
\end{theorem}
\begin{proof}
Since $\id_R\omega_R<\infty$ one has $\omega_R\in\B{C}(R)$, and therefore by \Cref{thm:cqpd=qpd} $\qpd{R}\Hom_R(C,\omega_R)<\infty$. By \cite[Corollary 4.1 and Corollary 4.2(a)]{TakWhite} it follows that
\[
\Ext_R^{>0}(\Hom_R(C,\omega_R),\Hom_R(C,\omega_R))\cong\Ext_R^{>0}(\omega_R,\omega_R)=0.
\]
Therefore by \cite[Theorem 6.20(1)]{qpd} $\pd_R\Hom_R(C,\omega_R)<\infty$. It follows that
\[
\id_R\Hom_R(\Hom_R(C,\omega_R),\omega_R)<\infty,
\]
but $\Hom_R(\Hom_R(C,\omega_R),\omega_R)\cong C$, showing that $C$ must be dualizing.
\end{proof}
\end{comment}

We next prove that the Depth formula holds under more general conditions than the classic ones.

\begin{theorem}\label{thm:DepthformulaQPD}
Let $R$ be a local ring. Let $M$ and $N$ be finitely generated $R$-modules and $C$ a semidualizing $R$-module. If the following conditions are satisfied
\begin{enumerate}
\item $\cqpd{C}{R}N<\infty$ and $N\in\B{C}(R)$,
\item $M\in\A{C}(R)$,
\item $\Tor_{>0}^R(M,N)=0$,
\end{enumerate}
then
\[
\depth(M\otimes_RN)=\depth M+\depth N-\depth R.
\]
\end{theorem}
\begin{proof}
By \Cref{thm:cqpd=qpd} $\qpd{R}\Hom_R(C,N)<\infty$, now one argues as in the proof of \Cref{thm:cpdDepth} by using \cite[Theorem 4.11]{qpd} instead of the classic version of the depth formula.
\end{proof}

The next result is a special case of the Auslander-Reiten conjecture. It generalizes \cite[Theorem 1.4]{qpd} and provides a semidualizing version over commutative rings of \cite[Theorem 3.3]{qpdAb}.
\begin{theorem}\label{thm:ARconj}
    Let $M$ and $C$ be $R$-modules where $C$ is semidualizing. If $\cqpd{C}{R}M<\infty$ and $\Ext_R^{\ge 2}(M,M)=0$, then $\cpd{C}{R}M<\infty$. Moreover, if $M$ is finitely generated, then
    \[
    \cpd{C}{R} M=\begin{cases}
    0\quad\mathrm{if } \Ext_R^1(M,M)=0\\
    1\quad\mathrm{if } \Ext_R^1(M,M)\neq0.
    \end{cases}
    \]
\end{theorem}

\begin{proof}
    There exists a bounded complex $P_{\bullet}$ of projective $R$-modules such that $P_{\bullet}\otimes_RC$ is not acyclic and its homologies are finite direct sum of copies of $M$. Therefore $$\Ext_R^{\ge 2}(\H_i(P_{\bullet}\otimes_RC), \H_j(P_{\bullet}\otimes_RC))=0$$ for all $i,j\in \mathbb Z$. By \cite[Tag 0GM4]{stacks-project} we get $C\otimes_R P_{\bullet} \cong \bigoplus_{i\in \mathbb Z}\Sigma^i M^{\oplus a_i}$ for some non-negative integers $a_i$, at least one of which is non-zero. Applying $\RHom_R(C,-)$ to both sides, and using $\RHom_R(C,C)\cong R$ and \cite[A.4.23]{gbook} we get $P_{\bullet}\cong \bigoplus_{i\in \mathbb Z}\Sigma^i\RHom_R(C,M)^{\oplus a_i}$. Thus, $\RHom_R(C,M)$ has finite projective dimension, so $\cpd{C}{R} M<\infty$ by \cite[Definition 3.1(1)]{totushek}. We now assume that $M$ is finitely generated and prove the last assertion. By localizing at a prime in the support of $M$ we can assume that $R$ is local. By \cite[Corollary 2.9(a)]{TakWhite} $M\in\B{C}(R)$ and therefore by \cite[Theorem 4.1 and Corollary 4.2(a)]{TakWhite}
    \[
\Ext_R^i(M,M)=\Ext_R^i(\Hom_R(C,M),\Hom_R(C,M)) \quad\forall i.
    \]
Since $\cpd{C}{R}M=\pd\Hom_R(C,M)$ by \cite[Theorem 2.11(c)]{TakWhite}, an application of \cite[Lemma 1(iii) page 154]{matsu} yields the desired result.
\end{proof}
In \cite[Question 5.4]{TakWhite}, Takahashi and White ask wheter the existence of a finitely generated module of finite $C$-projective dimension and finite $C$-injective dimension forces the ring to be Gorenstein. This question was answered by Sather-Wagstaff and Totushek in \cite[Theorem 3.2]{KeriTotu}. The next Theorem generalizes this result.
\begin{theorem}\label{thm:KeriTotu}
Let $C$ be a semidualizing $R$-module and $M$ be a finitely generated $R$-module such that $\cid{C}{R}M<\infty$ and $\cqpd{C}{R}M<\infty$, then $R_\p$ is Gorenstein for all $\p\in\Supp(M)$.
\end{theorem}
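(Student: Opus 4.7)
The plan is to localize, use the virtual smallness coming from $\cqpd{C}{R}M<\infty$ to extract a well-behaved witness complex, combine it with the finite injective dimension of $C\otimes_RM$, and invoke \Cref{lem:DVirtSmall}. First I would reduce to the case where $(R,\m)$ is local and $M\neq 0$: both hypotheses behave well under localization at any $\p\in\Supp_R M$ (the $\cqpd{C}{R}$-case is in the spirit of \Cref{thm:flatext}, and the $\cid{C}{R}$-case is standard). From $\cid{C}{R}M<\infty$ the Takahashi--White-style transfer yields $M\in\A{C}(R)$ and $\id_R(C\otimes_RM)<\infty$; set $Y:=C\otimes_RM$, a nonzero finitely generated $R$-module of finite injective dimension. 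From $\cqpd{C}{R}M<\infty$, \Cref{minimal} provides a non-acyclic perfect complex $P$ with $W:=P\otimes_RC$ satisfying $\H_i(W)\cong M^{\oplus a_i}$; so $W\neq 0$ and $W\in\thick_R(C)\cap\thick_R(M)$ in $\D_b^f(R)$.

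Next, I aim to apply \Cref{lem:DVirtSmall} to $Y$ (finite injective dimension) and $P$ (perfect and nonzero); this reduces the problem to producing a nonzero element of $\thick_R(Y)\cap\thick_R(P)$. The exact functor $C\lotimes_R-$ sends $\thick_R(M)$ into $\thick_R(Y)$ because $M\in\A{C}(R)$ gives $C\lotimes_RM\simeq Y$; hence $Z:=C\lotimes_RW$ lies in $\thick_R(Y)$, and $Z$ is nonzero because $W\neq 0$ and $C$ has full support (cf.\ \Cref{cquasilemma}). Since $W\in\thick_R(C)$ lies in the derived Bass class with $\RHom_R(C,W)\simeq P$, one has $W\simeq C\lotimes_RP$, and therefore $Z\simeq P\otimes_R(C\lotimes_RC)$.

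The main obstacle is to show that $Z$ (or some variant built from $P$ and $C\lotimes_RC$) also lies in $\thick_R(P)$. A naive containment would force $C\lotimes_RC\in\thick_R(R)$, which is too strong in general. I would instead exploit the perfect-complex identity $\RHom_R(P,X\lotimes_RP)\simeq\RHom_R(P,P)\lotimes_RX$ applied to $X=C\lotimes_RC$, obtaining the complex $\RHom_R(P,P)\lotimes_R(C\lotimes_RC)$ of finite injective dimension (since $\RHom_R(P,-)$ preserves finite injective dimension for perfect $P$), and combine this with a further localization to primes in $\Supp_RM$ where $P$ has finite length, so that a trace/Euler-characteristic splitting of the unit $R\to\RHom_R(P,P)$ becomes available. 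This would produce a nonzero perfect complex of finite injective dimension; once achieved, \Cref{lem:DVirtSmall} (ultimately Foxby's criterion, \cite[Proposition 2.10]{foxflat}) yields that $R$ is Gorenstein, completing the proof.
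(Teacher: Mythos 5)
Your approach diverges substantially from the paper's, and while the setup is correct, the endgame has a genuine gap. The paper's proof applies \Cref{lem:thick}(1) to $P_\bullet\otimes_RC$ to conclude $\cid{C}{R}(P_\bullet\otimes_RC)<\infty$, observes (trivially, via \cite[Proposition 3.6]{totushek}) that $\cpd{C}{R}(P_\bullet\otimes_RC)<\infty$ since it is a bounded complex of $C$-projectives, and then invokes \cite[Theorem 3.2]{KeriTotu} --- a Foxby-type theorem saying that a nonzero complex of simultaneously finite $C$-projective and $C$-injective dimension forces Gorensteinness on its support. You, by contrast, try to reduce directly to the original Foxby criterion (via \Cref{lem:DVirtSmall}), which amounts to re-deriving the $C$-version of Foxby's theorem from scratch. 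You correctly identify where this gets stuck: $Z\simeq P\lotimes_R(C\lotimes_RC)$ lies in $\thick_R(Y)$ but not, in any evident way, in $\thick_R(P)$, because one would need $C\lotimes_RC\in\thick_R(R)$.

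The proposed rescue via a trace/Euler-characteristic splitting of $R\to\RHom_R(P,P)$ does not work. The trace retraction exists as a map, but its composite with the unit is multiplication by $\chi(P)=\sum(-1)^i\operatorname{rank}(P_i)$, which is not a unit in the relevant situations. Indeed if $\Supp(P)\neq\Spec R$ (e.g.\ if $\p$ is a minimal prime of $\Supp(M)$ but $R_\p$ has positive dimension), then $\chi(P_\p)=0$; and even when $R_\p$ is Artinian, $\chi(P)$ typically vanishes, e.g.\ for any periodic minimal $C$-quasi-projective resolution $P=(0\to R^a\to R^a\to 0)$ one gets $\chi(P)=0$ while $\cqpd{C}{R}M=0$ (cf.\ \Cref{rmk:cqpdk}). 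So the unit $R\to\RHom_R(P,P)$ does not split, and one cannot conclude that $C\lotimes_RC$ (or any perfect surrogate) inherits finite injective dimension. To close this gap along your route one would essentially have to reprove \cite[Theorem 3.2]{KeriTotu}, whose argument is not the trace-splitting one you sketch; the cleaner path is simply to cite it, as the paper does.
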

\begin{proof}
Since $\cqpd{C}{R}M<\infty$, there exists a bounded complex of projectives $P_\b$ such that $\H_i(P_\b\otimes_RC)\cong M^{a_i}$. Since $\cid{C}{R}M<\infty$ it follows by \Cref{lem:thick}(1) that $\cid{C}{R}(P_\b\otimes_RC)<\infty$. Moreover, by \cite[Proposition 3.6]{totushek} $\cpd{C}{R}(P_\b\otimes_RC)<\infty$. By \cite[Theorem 3.2]{KeriTotu} $R_\p$ is Gorenstein for all $\p\in\Supp(P_\b\otimes_RC)$. To conclude the proof it suffices to notice that $\Supp(P\otimes_R C)=\Supp(M)$. 
\end{proof}

The next theorem generalizes Ischebeck's formula \cite{ischebeck}.

\begin{theorem}\label{thm:supext}
    Let $R$ be a local ring, and let $M$, $N$ be finitely generated nonzero $R$-modules. Assume $M\in \B{C}(R)$ and $\Ext^{>>0}_R(M,N)=\Ext^{>0}_R(C,N)=0$.  
    If $\cqpd{C}{R}M$ is finite, then   $$\sup\{ i| \Ext^i_R(M,N)\neq 0\} =  \depth R - \depth M.$$
\end{theorem}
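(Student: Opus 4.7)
The plan is to reduce to the quasi-projective Ischebeck formula (\cite{qpdIschebeck}) applied to the pair $(\Hom_R(C,M),\Hom_R(C,N))$ via an isomorphism of all $\Ext$ modules. The three ingredients are: the Bass class hypothesis, the $\Ext$-vanishing of $N$ against $C$, and the transfer \Cref{thm:cqpd=qpd}.

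First, I will translate $\Ext^i_R(M,N)$. Since $M\in\B{C}(R)$, one has $\Tor^R_{>0}(C,\Hom_R(C,M))=0$ and a natural isomorphism $C\otimes_R\Hom_R(C,M)\cong M$, so in the derived category $M\simeq C\lotimes_R\Hom_R(C,M)$. Since $\Ext^{>0}_R(C,N)=0$, one also has $\RHom_R(C,N)\simeq \Hom_R(C,N)$. Combining these with the standard adjunction gives
\[
\RHom_R(M,N)\simeq \RHom_R(C\lotimes_R\Hom_R(C,M),N)\simeq \RHom_R(\Hom_R(C,M),\Hom_R(C,N)).
\]
Taking cohomology yields $\Ext^i_R(M,N)\cong \Ext^i_R(\Hom_R(C,M),\Hom_R(C,N))$ for every $i\in\bbZ$.

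Next, I will apply \cite{qpdIschebeck} to the pair $(\Hom_R(C,M),\Hom_R(C,N))$. Its hypotheses are verified as follows: by \Cref{thm:cqpd=qpd}, $\qpd{R}\Hom_R(C,M)=\cqpd{C}{R}M<\infty$; by the isomorphism above and the hypothesis $\Ext^{\gg 0}_R(M,N)=0$, also $\Ext^{\gg 0}_R(\Hom_R(C,M),\Hom_R(C,N))=0$. Thus
\[
\sup\{i\mid\Ext^i_R(\Hom_R(C,M),\Hom_R(C,N))\ne 0\}=\depth R-\depth\Hom_R(C,M).
\]

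Finally, I will identify the right-hand side with $\depth R-\depth M$. Since $M\in\B{C}(R)$ implies $\Ext^{>0}_R(C,M)=0$, \cite[Lemma 3.9]{AiTak} yields $\depth\Hom_R(C,M)=\depth M$. Combining the last two displays with the $\Ext$-isomorphism finishes the proof. The main conceptual step is just the $\Ext$ transfer isomorphism; once that is in hand, the statement follows directly from the non-semidualizing Ischebeck-type formula already established in \cite{qpdIschebeck}. Note in passing that this also matches the $C$-quasi Auslander–Buchsbaum formula (\Cref{thm:AB}), which independently gives $\cqpd{C}{R}M=\depth R-\depth M$.
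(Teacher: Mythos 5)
Your proof is correct, but it takes a genuinely different route from the paper. The paper argues directly: it builds a minimal $C$-quasi-free resolution, runs a third-quadrant spectral sequence, and performs induction on $t=\cqpd{C}{R}M$, essentially re-proving the $C=R$ case of \cite{qpdIschebeck} as a byproduct. You instead reduce to the $C=R$ case by establishing the derived-category isomorphism
\[
\RHom_R(M,N)\simeq\RHom_R\bigl(\Hom_R(C,M),\Hom_R(C,N)\bigr)
\]
via $M\simeq C\lotimes_R\Hom_R(C,M)$ (Bass class), the derived tensor-hom adjunction, and $\RHom_R(C,N)\simeq\Hom_R(C,N)$ (from $\Ext^{>0}_R(C,N)=0$), and then feed the pair $(\Hom_R(C,M),\Hom_R(C,N))$ into the already-known Ischebeck formula for $\qpd{}$ from \cite{qpdIschebeck}, together with \Cref{thm:cqpd=qpd} and \cite[Lemma 3.9]{AiTak}. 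Your argument is shorter and makes the semidualizing statement a formal consequence of the non-semidualizing one plus the transfer machinery; the paper's proof is self-contained and does not presuppose \cite{qpdIschebeck}. One small point you should make explicit: the cited formula from \cite{qpdIschebeck} requires both input modules to be nonzero. That $\Hom_R(C,M)\neq0$ is automatic (otherwise $\qpd{R}\Hom_R(C,M)=-\infty$, contradicting $\cqpd{C}{R}M\geq 0$ via \Cref{thm:cqpd=qpd}). That $\Hom_R(C,N)\neq0$ also needs a word: if it vanished, your $\Ext$ isomorphism would force $\Ext^i_R(M,N)=0$ for all $i$, which is impossible for nonzero finitely generated $M,N$ over a local ring (alternatively, use $\Supp_R(C)=\Spec R$ and localize at an associated prime of $N$).
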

\begin{proof} 
     Set $t=\cqpd{C}{R}M$. By Proposition \ref{ExtTor}, we have $\Ext^i_R(M,N)=0$ for all $i>t$. It remains to show $\Ext^t_R(M,N)\neq 0$. 
    By Proposition \ref{minimal} there exists a minimal $C$-quasi free resolution $F_\b=(0\to F_n \to F_{n-1} \to \dots \to F_0 \to 0)$ of $M$ such that $t=\sup F_\b - \hsup F_\b$. If $t=0$, then we must have $\Hom_R(M,N)\neq 0$ because otherwise, $\Ext^i_R(M,N)=0$ for all $i\in \mathbb{Z}$ and therefore $\mathrm{grade}_R(\mathrm{Ann}_R(M),N)=\infty$, contradiction.
    Next, assume $t=1$. Then by definition, we have $\H_n(F_\b)=0\neq \H_{n-1}(F_\b)$. As before we need to show $\Ext^1_R(M,N)\neq 0$. Seeking a contradiction, we assume $\Ext^1_R(M,N)=0$. Let $I_\b$ be an injective resolution of $N$. Then the double complex $\Hom_R(F_\b\otimes_RC,I_\b)$ induces a third quadrant spectral sequence $$\E^{p,q}_2\cong \Ext^p_R(\H_q(F_\b\otimes_RC),N)\Longrightarrow \H^{p+q}(\Hom_R(F_\b\otimes_RC,I_\b)).$$ Since $\Ext^{>0}_R(C,N)=0$, the isomorphism of complexes $\Hom_R(F_\b\otimes_RC,I_\b)\cong \Hom_R(F_\b,\Hom_R(C,I_\b))$ shows that $\H^i(\Hom_R(F_\b\otimes_RC,I_\b))\cong \H^i\Hom_R(F_\b,\Hom_R(C,N))$. Since $\H_i(F_\b\otimes_RC)\cong M^{\oplus a_i}$ for some $a_i\geq 0$, and $\Ext^{>0}_R(M,N)=0$, the spectral sequence above collapses. Therefore
     we get isomorphisms \[
     \Hom_R(\H_i(F_\b\otimes_RC),N)\cong \H^i\Hom_R(F_\b,\Hom_R(C,N))
     \]
     for all $i\ge 1$. In particular, $\H^n\Hom_R(F_\b,\Hom_R(C,N))\cong \Hom_R(\H_n(F_\b\otimes_RC),N)$ which is 0 by \Cref{cquasilemma}. Thus we get a surjection $\Hom_R(F_{n-1},\Hom_R(C,N)) \to \Hom_R(F_n,\Hom_R(C,N)) \to 0$. Since $F_\b$ is minimal, Nakayama's Lemma implies that $\Hom_R(C,N)=0$ and hence, $N=0$ which is a contradiction. Finally, let $t\geq 2$. There is an exact sequence $0\to L \to F_0\otimes_RC \to M \to 0$ for some module $L$ (provided one chooses $F_\b$ such that $\inf F_\b=\hinf F_\b$, which one can do by \Cref{lem:hinf=inf}). Then we have $\Ext^{i+1}_R(M,N)\cong\Ext^i_R(L,N)$ for all $i>0$, and $\depth_R L= \depth M + 1$. Thus the assertion follows by Proposition \ref{prop:3.3}, Theorem \ref{thm:AB}, and induction.
\end{proof}

\begin{remark}
When $C=R$ one recovers from \Cref{thm:supext} the recent result \cite[Theorem 1.1(1)]{qpdIschebeck}.
\end{remark}

\begin{corollary}\label{cor:DepFormQPD}
    Let $R$ be a local ring, $C$ a semidualizing $R$-module and let $M$, $N$ be nonzero finitely generated $R$-modules. Assume that
    \begin{enumerate}
    \item $\cqpd{C}{R}M<\infty$ and $M\in\B{C}(R)$,
    \item $N\in\A{C}(R)$,
    \item $\Tor^R_{>>0}(M,N)=0$.
    \end{enumerate}
    Then $M$ and $N$ satisfy the dependency formula.
\end{corollary}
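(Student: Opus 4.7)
The plan is to reduce this statement to Theorem \ref{thm:DepFor} (the dependency formula for modules of finite quasi-projective dimension) by passing through the standard Foxby equivalence machinery, in exactly the way Corollary \ref{cor:DepForCpd} reduces to Jorgensen's classical theorem. The key transfer ingredients are already in place: since $\cqpd{C}{R}M<\infty$ and $M\in\B{C}(R)$, Theorem \ref{thm:cqpd=qpd} gives $\qpd{R}\Hom_R(C,M)<\infty$, which is the input Theorem \ref{thm:DepFor} requires.

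First I would invoke the Foxby isomorphism \cite[Lemma 3.1.13(c)]{KeriSD}, which applies because $M\in\B{C}(R)$ and $N\in\A{C}(R)$, to obtain
\[
\Tor^R_i(M,N)\cong\Tor^R_i(\Hom_R(C,M),\,C\otimes_R N)\quad\text{for all }i\ge 0.
\]
In particular, hypothesis (3) becomes $\Tor^R_{>>0}(\Hom_R(C,M),C\otimes_R N)=0$. Applying Theorem \ref{thm:DepFor} to the pair $\bigl(\Hom_R(C,M),\,C\otimes_R N\bigr)$ then yields
\[
\sup\{i\mid\Tor^R_i(M,N)\neq 0\}=\sup_{\p\in\Spec R}\bigl\{\depth R_\p-\depth_{R_\p}\Hom_{R_\p}(C_\p,M_\p)-\depth_{R_\p}(C_\p\otimes_{R_\p}N_\p)\bigr\}.
\]

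Next I would rewrite the two depths on the right as depths of $M_\p$ and $N_\p$. By \cite[Corollary 3.4.2]{KeriSD}, $C_\p$ is semidualizing for $R_\p$; by \cite[Propositions 3.4.7, 3.4.8]{KeriSD}, $M_\p\in\B{C_\p}(R_\p)$ and $N_\p\in\A{C_\p}(R_\p)$. Thus $\depth_{R_\p}\Hom_{R_\p}(C_\p,M_\p)=\depth_{R_\p}M_\p$ by \cite[Lemma 3.9]{AiTak}, and $\depth_{R_\p}(C_\p\otimes_{R_\p}N_\p)=\depth_{R_\p}N_\p$ by the Auslander-class depth identity (this is the computation carried out at the end of the proof of Theorem \ref{thm:cpdDepth}, or equivalently via \cite[Lemma 3.9]{AiTak} combined with \cite[Theorem 2.2.6(c)]{KeriSD}). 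Substituting these into the displayed formula produces the dependency formula for $M$ and $N$.

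I do not expect a serious obstacle: the only subtle point is verifying that all the localized Foxby memberships and depth identities pass through, but these are standard consequences of the behavior of Auslander and Bass classes under localization cited above. The argument is essentially the one already written for Corollary \ref{cor:DepForCpd}, with the single modification that the classical Jorgensen formula \cite[Theorem 2.2]{Jorgensen} is replaced by its quasi-projective analogue, Theorem \ref{thm:DepFor}, whose applicability is guaranteed by the transfer result Theorem \ref{thm:cqpd=qpd}.
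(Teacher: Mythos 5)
Your proof is correct and follows the same route as the paper: transfer via Theorem \ref{thm:cqpd=qpd} and the Foxby isomorphism \cite[Lemma 3.1.13(c)]{KeriSD}, apply Theorem \ref{thm:DepFor} to the pair $(\Hom_R(C,M),\,C\otimes_R N)$, then rewrite the localized depths exactly as in the proof of \Cref{cor:DepForCpd}. The only difference is that you spell out the concluding depth identifications rather than citing the earlier corollary for them.
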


\begin{proof}
By \cite[Lemma 3.1.13(c)]{KeriSD}
\[
\Tor^R_i(M,N)\cong\Tor_R^i(\Hom_R(C,M),C\otimes_RN)\quad\forall i.
\]
By \Cref{thm:cqpd=qpd} $\qpd{R}\Hom_R(C,M)<\infty$, therefore by \Cref{thm:DepFor}
\[
\sup\{i\mid\Tor^R_i(M,N)\neq 0\}=\sup\{\depth R_{\p}-\depth_{R_\p}\Hom_{R_\p}(C_\p,M_\p)-\depth_{R_p}(C_\p\otimes_{R_\p}N_\p)\mid\p\in \Spec R\}.
\]
Now one concludes as in \Cref{cor:DepForCpd}.
\end{proof}

\section{Applications of the $C$-quasi-injective dimension}
In this section we provide some applications of the $C$-quasi-injective dimension. The Proposition below is a generalization of \cite[Proposition 3.4(3)]{qid}, we first prove a $C$-injective version of it as a Lemma.

\begin{lemma}\label{cid}
  Let $\mathfrak{a}$ be an ideal of $R$, and let $N$ be an $R$-module. If $\cid{C}RN=d< \infty$, then $\H^i_{\mathfrak{a}}(N)=0$ for all $i>d$.
\end{lemma}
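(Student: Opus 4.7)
The plan is to first establish that every $C$-injective module is $\Gamma_{\mathfrak{a}}$-acyclic, and then apply a hypercohomology spectral sequence to a bounded $C$-injective resolution of $N$.

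For the main technical step, I would fix a generating sequence $\mathbf{x} = x_1, \ldots, x_s$ of $\mathfrak{a}$ and compute local cohomology via the Čech complex $\check C(\mathbf{x}; -)$. Since $C$ is finitely generated, $\Hom_R(C, -)$ commutes with localization at single elements of $R$ and with finite direct sums, yielding a natural isomorphism of complexes
\[
\check C(\mathbf{x}; \Hom_R(C, I)) \cong \Hom_R(C, \check C(\mathbf{x}; I))
\]
for any injective $R$-module $I$. As $R$ is Noetherian, every localization of an injective module is injective, so $\check C(\mathbf{x}; I)$ is a bounded complex of injectives. Since $\H^{>0}_{\mathfrak{a}}(I) = 0$, this complex is quasi-isomorphic, hence homotopy equivalent, to $\Gamma_{\mathfrak{a}}(I)$ concentrated in degree $0$ (a quasi-isomorphism between bounded complexes of injectives is a homotopy equivalence). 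Applying the additive functor $\Hom_R(C, -)$ preserves homotopy equivalences, so $\H^i_{\mathfrak{a}}(\Hom_R(C, I)) = 0$ for all $i > 0$.

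Now use $\cid{C}{R} N = d$ to obtain a bounded exact sequence $0 \to N \to J^0 \to J^1 \to \cdots \to J^d \to 0$ with each $J^i$ a $C$-injective module, hence $\Gamma_{\mathfrak{a}}$-acyclic by the previous step. The hyperderived-functor spectral sequence
\[
E_1^{p,q} = \H^q_{\mathfrak{a}}(J^p) \Longrightarrow \H^{p+q}_{\mathfrak{a}}(N)
\]
then collapses onto the row $q = 0$, giving $\H^i_{\mathfrak{a}}(N) \cong \H^i(\Gamma_{\mathfrak{a}}(J^{\bullet}))$, which vanishes for $i > d$ since $\Gamma_{\mathfrak{a}}(J^{\bullet})$ is supported in degrees $[0, d]$.

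The main obstacle is the $\Gamma_{\mathfrak{a}}$-acyclicity of $C$-injective modules. A direct attempt via $\H^i_{\mathfrak{a}}(M) = \varinjlim_n \Ext^i_R(R/\mathfrak{a}^n, M)$ reduces, using Hom-tensor adjunction, to $\varinjlim_n \Hom_R(\Tor^R_i(R/\mathfrak{a}^n, C), I)$, whose vanishing for $i > 0$ is not manifest when $C$ is not flat. The Čech complex sidesteps this by exploiting that $\Hom_R(C, -)$ commutes with localization for finitely generated $C$.
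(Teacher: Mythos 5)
Your argument is correct, and it reaches the conclusion by a genuinely different route for the key technical step, namely the vanishing of $\H^{>0}_{\mathfrak{a}}$ on a $C$-injective module. The paper applies $\Hom_R(-,I)$ to a (finite) free resolution of $C$ to produce an explicit injective resolution of $\Hom_R(C,I)$, and then uses the natural isomorphism $\Gamma_{\mathfrak{a}}(\Hom_R(F,I))\cong\Hom_R(F,\Gamma_{\mathfrak{a}}(I))$ for finite free $F$, together with the Noetherian fact that $\Gamma_{\mathfrak{a}}(I)$ is again injective, to see that applying $\Gamma_{\mathfrak{a}}$ to this resolution keeps it exact. You instead compute $\H^{\bullet}_{\mathfrak{a}}$ via the \v{C}ech complex and commute $\Hom_R(C,-)$ past localization at each $x_i$, which is legitimate because $C$ is finitely presented and $R_{x_i}$ is flat; you then invoke the Noetherian facts that localizations of injectives stay injective and that $\Gamma_{\mathfrak{a}}(I)$ is injective, so that the quasi-isomorphism $\Gamma_{\mathfrak{a}}(I)\to\check C(\mathbf{x};I)$ is a homotopy equivalence that survives $\Hom_R(C,-)$. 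Both proofs then conclude by the same acyclic-resolution principle: the paper cites \cite[Exercise 4.1.2]{BS}, and you spell it out as the hypercohomology spectral sequence collapsing onto the row $q=0$ --- these are the same argument. What each buys: the paper's version stays inside the derived-functor picture and needs only that $\Gamma_{\mathfrak{a}}$ preserves injectives, whereas your \v{C}ech route is self-contained and avoids choosing a free resolution, at the cost of also using that localizations of injectives over Noetherian rings are injective. Your closing remark explaining why the naive direct-limit description via $\Ext^i_R(R/\mathfrak{a}^n,-)$ and Hom--tensor adjunction does not obviously give the vanishing is a fair diagnosis of why one reaches for one of these two devices in the first place.
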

\begin{proof}
    First we show that for every finitely generated module $M$ and an injective $R$-module $I$, one has $\H^{i>0}_{\mathfrak{a}}(\Hom_R(M,I))=0$. Set $L=\Hom_R(M,I)$ and let $\dots \to F_1 \to F_0 \to M \to 0$ be a free resolution of $M$. By applying $\Hom_R(-,I)$, we get an injective resolution 
    $0\to L \to \Hom_R(F_0,I)\to \Hom_R(F_1,I) \to \dots$ of $L$. Then by applying $\Gamma_\mathfrak{a}(-)$, we have a commutative diagram
$$
\xymatrix{
0\ar[r]& \Gamma_\mathfrak{a}(L)\ar[r]& \Gamma_\mathfrak{a}(\Hom_R(F_0,I))\ar[r]& \Gamma_\mathfrak{a}(\Hom_R(F_1,I))\ar[r]& \dots\\
0\ar[r]& \Hom_R(M,\Gamma_\mathfrak{a}(I))\ar[u]^\cong \ar[r]& \Hom_R(F_0,\Gamma_\mathfrak{a}(I))\ar[r]\ar[u]^\cong& \Hom_R(F_1,\Gamma_\mathfrak{a}(I))\ar[u]^\cong \ar[r]& \dots, }
$$
where the vertical maps are natural isomorphisms; see the proof of \cite[Theorem 3.2]{Zargar}. Since $\Gamma_\mathfrak{a}(I)$ is an injective module, the bottom row is exact. Therefore the upper row is also exact. This shows that $\H^{>0}_\mathfrak{a}(L)=0$ as desired.

Now assume $\cid{C}RN=d< \infty$, and let $I_\b=(0\to I_0 \to I_{-1} \to \dots \to I_{-d}\to 0)$ be a complex of injective modules such that $\Hom_R(C,I_\b)$ is a $C$-injective resolution of $N$. By the last part, we have $\H^{>0}_\mathfrak{a}(\Hom_R(C,I_i))=0$, for all $i$. Hence by \cite[Exercise 4.1.2]{BS}, we have $\H^i_{\mathfrak{a}}(N)\cong \H^i(\Gamma_\mathfrak{a}(\Hom_R(C,I_\b)))=0$ for all $i>d$.
\end{proof}

\begin{proposition}\label{localcoh}
    Let $\cqid{C}RN < \infty$. Then $\H^i_\mathfrak{a}(N)=0$ for all $i>\cqid{C}RN$.
\end{proposition}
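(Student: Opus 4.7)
The plan is to deduce the vanishing from a hypercohomology spectral sequence applied to the complex $\Hom_R(C, I_\bullet)$, combined with the fact that $\H^i_\mathfrak{a}(N)$ must vanish for $i$ larger than the number of generators of $\mathfrak{a}$. Fix $I_\bullet$ a $C$-quasi-injective resolution of $N$ realizing $d:=\cqid{C}{R}N$, and set $J_\bullet:=\Hom_R(C, I_\bullet)$. Write $s=\inf J_\bullet$ and $h=\hinf J_\bullet$, so $d=h-s$; in particular $b_h>0$ and $b_j=0$ for $j<h$, where $\H_j(J_\bullet)\cong N^{\oplus b_j}$.

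First I would note that every term $J_j$ is $C$-injective, so by \Cref{cid} applied with $\cid{C}{R}J_j=0$ one has $\H^{>0}_\mathfrak{a}(J_j)=0$. Consequently the standard hypercohomology spectral sequence collapses and gives $\mathbf{R}\Gamma_\mathfrak{a}(J_\bullet)\simeq\Gamma_\mathfrak{a}(J_\bullet)$ in $\D(R)$. Since $\Gamma_\mathfrak{a}(J_\bullet)$ vanishes in homological degrees less than $s$, its cohomology at total cohomological degree $n$ is zero whenever $n>-s=d-h$.

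The other hypercohomology spectral sequence, obtained after viewing $J_\bullet$ cohomologically via $K^q:=J_{-q}$, reads
\[
E_2^{p,q}=\H^p_\mathfrak{a}(\H^q(K^\bullet))\cong\H^p_\mathfrak{a}(N)^{\oplus b_{-q}}\;\Longrightarrow\;\H^{p+q}\bigl(\mathbf{R}\Gamma_\mathfrak{a}(K^\bullet)\bigr),
\]
which converges because $K^\bullet$ is bounded. Let $i_0:=\sup\{i\mid\H^i_\mathfrak{a}(N)\ne 0\}$; this is finite, as local cohomology is computed by the \v{C}ech complex on any finite generating set of $\mathfrak{a}$. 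Assume for contradiction that $i_0>d$.

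The final step is to show that the corner term $E_2^{i_0,-h}=\H^{i_0}_\mathfrak{a}(N)^{\oplus b_h}$ survives the spectral sequence. Every incoming differential $d_r\colon E_r^{i_0-r,-h+r-1}\to E_r^{i_0,-h}$ has source zero because $-h+r-1>-h$ forces $b_{h-r+1}=0$; every outgoing differential $d_r\colon E_r^{i_0,-h}\to E_r^{i_0+r,-h-r+1}$ has target zero because $\H^{i_0+r}_\mathfrak{a}(N)=0$ by the maximality of $i_0$. Hence $E_\infty^{i_0,-h}=E_2^{i_0,-h}\ne 0$. But its total degree $i_0-h$ exceeds $-s$, so the abutment vanishes there, forcing $E_\infty^{i_0,-h}=0$, a contradiction, whence $\H^i_\mathfrak{a}(N)=0$ for all $i>d$. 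The main obstacle is the isolation of the corner $E_2^{i_0,-h}$ from all spectral-sequence differentials, which is precisely what the extremality of $i_0$ and $h$ provides; without that isolation one only sees $E_\infty^{p,-h}$ as a subobject of $E_2^{p,-h}$, not an equality.
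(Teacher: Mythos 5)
Your proof is correct and follows essentially the same route the paper intends: the paper's proof is a pointer — it invokes Lemma~\ref{cid} (that $C$-injective modules are $\Gamma_{\mathfrak a}$-acyclic, and the $C$-injective-dimension bound) and says to run the argument of the cited result for quasi-injective dimension, which is exactly the two-spectral-sequence comparison for the bounded complex $\Hom_R(C,I_\bullet)$ that you spell out. Your degree bookkeeping ($d = h - s$, corner term at $(i_0,-h)$ isolated by extremality of $i_0$ and $h$, abutment vanishing above cohomological degree $-s$) is accurate, so the argument goes through.
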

\begin{proof}
    By using Lemma \ref{cid}, the same argument as in the proof of \cite[Proposition 3.4(3)]{qid} applies to our case.
\end{proof}

The next Proposition generalizes \cite[Proposition 3.4(2)]{qid}.

\begin{proposition}\label{Vext}
    Let $M$ and $N$ be $R$-modules. Suppose $\cqid{C}{R} N < \infty$ and $\Tor^R_{>0}(C,M)=0$. If $\Ext^{>>0}_R(M,N)=0$, then $\Ext^i_R(M,N)=0$ for all $i>\cqid{C}{R}N$. 
\end{proposition}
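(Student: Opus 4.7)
My plan is to mirror the argument of \Cref{ExtTor}(1), replacing the Tor-based spectral sequence used there with a cohomological hypercohomology spectral sequence adapted to the $C$-quasi-injective setting. Without loss of generality assume $N \neq 0$. Let $d = \cqid{C}{R}N$ and pick a $C$-quasi-injective resolution $I_\b$ of $N$ achieving this infimum; so $\H_i(\Hom_R(C, I_\b)) \cong N^{\oplus b_i}$ for some non-negative integers $b_i$, and $d = f - \inf I_\b$ where $f = \hinf\Hom_R(C, I_\b)$ and $\inf\Hom_R(C, I_\b) = \inf I_\b$ by \Cref{cqinj}. In particular $b_f > 0$.

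The first step is to produce the spectral sequence
\[
E_2^{p, q} = \Ext^p_R(M, \H^q(\Hom_R(C, I_\b))) \Rightarrow \H^{p+q}\Hom_R(M \otimes_R C, I_\b),
\]
where $\Hom_R(C, I_\b)$ and $\Hom_R(M \otimes_R C, I_\b)$ are viewed as cochain complexes. This should come from the hypercohomology spectral sequence for $\RHom_R(M, -)$ applied to $\Hom_R(C, I_\b)$, combined with the chain of isomorphisms
\[
\RHom_R(M, \Hom_R(C, I_\b)) \simeq \RHom_R(M, \RHom_R(C, I_\b)) \simeq \RHom_R(M \lotimes_R C, I_\b) \simeq \Hom_R(M \otimes_R C, I_\b),
\]
which are valid because $I_\b$ consists of injective modules (giving the first and last $\simeq$) and because $\Tor_{>0}^R(C, M) = 0$ (giving $M \lotimes_R C \simeq M \otimes_R C$). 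In particular $E_2^{p, q} = \Ext^p_R(M, N)^{\oplus b_{-q}}$.

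Next, I would set $n = \sup\{i : \Ext^i_R(M, N) \neq 0\}$, finite by hypothesis, and focus on the corner term $E_2^{n, -f} = \Ext^n_R(M, N)^{\oplus b_f}$, which is non-zero. The next step is to show this survives to $E_\infty$: for $r \geq 2$, any incoming $d_r$-differential to $E_r^{n, -f}$ starts at $E_r^{n-r, -f + r - 1}$, whose $q$-coordinate $-f + r - 1$ lies strictly above the top of the non-zero $q$-range $[-\hsup\Hom_R(C, I_\b), -f]$; any outgoing $d_r$-differential from $E_r^{n, -f}$ lands at $E_r^{n + r, -f - r + 1}$, which vanishes since its $p$-coordinate exceeds $n$ and so the corresponding $\Ext^{n+r}_R(M, N)$ is zero. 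Hence $E_\infty^{n, -f} \neq 0$, forcing $\H^{n-f}\Hom_R(M \otimes_R C, I_\b) \neq 0$. Since that cochain complex is concentrated in cohomological degrees $[-\sup I_\b, -\inf I_\b]$, one obtains $n - f \leq -\inf I_\b$, i.e., $n \leq f - \inf I_\b = d$, as desired.

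The main obstacle I anticipate is setting up the spectral sequence precisely and carefully tracking the chain-versus-cochain indexing of the differentials; once that bookkeeping is handled, the rest of the argument is a direct dualization of the proof of \Cref{ExtTor}(1).
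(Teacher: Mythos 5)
Your proposal is correct and follows essentially the same route as the paper: you build the spectral sequence $\Ext^p_R(M,\H^q(\Hom_R(C,I_\b)))\Rightarrow \H^{p+q}\Hom_R(M\otimes_RC,I_\b)$ (the paper does it via the double complex $\Hom_R(F_\b,\Hom_R(C,I_\b))$ for a projective resolution $F_\b$ of $M$, which yields the same hyper-Ext spectral sequence), identify the abutment via adjunction and the hypothesis $\Tor^R_{>0}(C,M)=0$, and run a corner argument on the extremal nonzero entry. The only difference is that you spell out in full why the entry $E_2^{n,-f}$ survives to $E_\infty$, whereas the paper compresses this into the remark about the bidegree of the $E_2$-differential; your careful cohomological indexing also avoids the sign-convention slip in the paper's final line.
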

\begin{proof}
    Let $F_\b$ be a projective resolution of $M$, and let $I_\b$ be a bounded $C$-quasi-injective resolution of $N$.
    Then the double complex $\Hom_R(F_\b,\Hom_R(C,I_\b))$ induces a third quadrant spectral sequence 
    $$\E^{ij}_2\cong \Ext^i_R(M,\H^j(\Hom_R(C,I_\b))) \Longrightarrow \H^{i+j} \Tot(\Hom_R(F_\b,\Hom_R(C,I_\b))).$$
    Since $\Tor^R_{>0}(C,M)=0$, the canonical isomorphism $\Hom_R(F_\b,\Hom_R(C,I_\b)) \cong \Hom_R(F_\b\otimes_RC,I_\b)$ implies that $\H^{i+j} \Tot(\Hom_R(F_\b,\Hom_R(C,I_\b))) \cong \H^{i+j}\Hom_R(M\otimes_R C, I_\b)$.
    Set $h=\hinf I_\b$, $l=\inf I_\b$, and $n=\sup \{i| \Ext^i_R(M,N)\neq 0\}$. Since $\H^i(\Hom_R(C,I_\b))$ is isomorphic to a direct sum of copies of $N$ and the maps on $\E_2$ page are of bidegree $(2,-1)$ we have $\E^{nh}_\infty\cong\E^{nh}_2\cong \Ext^n_R(M,\H^h(\Hom_R(C,I_\b)))\neq 0$. Thus, one has $n+h\leq l$ and hence $n\leq l-h \leq \cqid{C}{R}M$.
\end{proof}

The next Theorem generalizes \cite[Theorem 3.2]{qid} which was itself a generalization of the Bass' formula.

\begin{theorem}\label{Bassformula}
    Let $R$ be a local ring. Let $M$ be a nonzero finitely generated $R$-module such that $\cqid{C}RM < \infty$. Assume either $R$ is Cohen-Macaulay or $\Tor^R_{>0}(C,M)=0$. Then $\cqid{C}RM = \depth R$. 
\end{theorem}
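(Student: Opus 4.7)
The plan is to reduce both cases to the Bass formula for quasi-injective dimension, \cite[Theorem 3.2]{qid}, via the transfer relating $\cqid{C}{R}M$ to $\qid_R(C\otimes_R M)$.

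For the case $\Tor^R_{>0}(C,M)=0$, let $I_\b$ be a $C$-quasi-injective resolution of $M$ realizing $\cqid{C}{R}M$. Applying the spectral sequence of \Cref{lem:ExtSpectralInj}
\[
\Tor^R_p(C,\H^q(\Hom_R(C,I_\b)))\Longrightarrow\H^{p+q}(I_\b)
\]
and using that $\H^q(\Hom_R(C,I_\b))\cong M^{b_q}$ together with $\Tor^R_{>0}(C,M)=0$, the sequence collapses on the column $p=0$, yielding $\H^q(I_\b)\cong(C\otimes_R M)^{b_q}$. By \Cref{cqinj}, the amplitudes of $I_\b$ and $\Hom_R(C,I_\b)$ coincide, so $I_\b$ is in fact a quasi-injective resolution of $C\otimes_R M$ with amplitude equal to $\cqid{C}{R}M$. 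Since $C$ has full support and $M\ne 0$, the module $C\otimes_R M$ is nonzero finitely generated with finite quasi-injective dimension, and \cite[Theorem 3.2]{qid} gives $\qid_R(C\otimes_R M)=\depth R$, hence $\cqid{C}{R}M\ge\depth R$. For the reverse inequality, my plan is to promote the hypothesis to $M\in\A{C}(R)$ by combining the spectral sequence above with the dual one
\[
\Ext^p_R(C,\H^q(I_\b))\Longrightarrow\H^{q-p}(\Hom_R(C,I_\b))
\]
applied to the same $I_\b$, and comparing both pages against the already-determined homologies $\H^q(I_\b)\cong(C\otimes_R M)^{b_q}$ and $\H^{q-p}(\Hom_R(C,I_\b))\cong M^{b_{q-p}}$; the goal is to force both $\Ext^{>0}_R(C,C\otimes_R M)=0$ and $M\cong\Hom_R(C,C\otimes_R M)$. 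Once $M\in\A{C}(R)$, the first version of \Cref{auslanderclass} yields $\cqid{C}{R}M=\qid_R(C\otimes_R M)=\depth R$.

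For the Cohen-Macaulay case, $C$ is maximal Cohen-Macaulay, so every $R$-regular element is $C$-regular. The plan is to argue by induction on $\depth R$. The base case $\depth R=0$ requires a direct proof that over an Artinian local ring, any nonzero finitely generated module with finite $\cqid{C}{R}$ must have $\cqid{C}{R}M=0$; the natural tool is Matlis duality together with the structure of injectives over Artinian rings, possibly passing the problem to the already-established $C$-quasi-projective setting where \Cref{thm:AB} applies. For the inductive step, choose $x\in\mathfrak{m}$ regular on $R$, establish a base-change theorem for $C$-quasi-injective resolutions (analogous to \Cref{lem:regSeq}) reducing the computation to $R/xR$ with semidualizing module $C/xC$, and then invoke the inductive hypothesis since $\depth(R/xR)=\depth R-1$.

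The principal obstacle is the upper bound $\cqid{C}{R}M\le\depth R$. In the first case, the challenge is upgrading the single vanishing hypothesis $\Tor^R_{>0}(C,M)=0$ to the full Auslander-class condition via delicate spectral-sequence bookkeeping; in the Cohen-Macaulay case, the difficulty is both the Artinian base case, where the Auslander-class hypothesis is not available, and the construction of a base-change theorem for $C$-quasi-injective resolutions that does not rely on placing $M$ in $\A{C}(R)$.
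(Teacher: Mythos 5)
Your lower bound argument in the Tor-vanishing case is sound: the spectral sequence of \Cref{lem:ExtSpectralInj} does collapse under $\Tor^R_{>0}(C,M)=0$, so an optimal $C$-quasi-injective resolution of $M$ is indeed a quasi-injective resolution of $C\otimes_R M$, giving $\qid_R(C\otimes_R M)\le \cqid{C}{R}M$ and hence $\depth R\le \cqid{C}{R}M$ by \cite[Theorem 3.2]{qid}. This is a different route from the paper's, which instead takes a maximal regular sequence $\underline{x}$ on both $R$ and $C$ (possible since $\depth_R C=\depth R$) and applies \Cref{Vext} to $R/(\underline{x})$; the paper's route has the advantage that it works uniformly in both cases without case splitting.

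The genuine gap is the upper bound. Your plan to ``promote'' $\Tor^R_{>0}(C,M)=0$ to $M\in\A{C}(R)$ by comparing the two spectral sequences is circular: to see that $\Ext^p_R(C,\H^q(I_\b))\Rightarrow\H^{q-p}(\Hom_R(C,I_\b))$ collapses you must already know $\Ext^{>0}_R(C,C\otimes_R M)=0$, which is precisely part of what you are trying to prove, and nothing in the $E^2$-page comparison forces this vanishing. The paper never establishes $M\in\A{C}(R)$ and sidesteps the issue: setting $Z_s=\ker\partial_s$ at $s=\hinf\Hom_R(C,I_\b)$, one has $\cqid{C}{R}M=\cid{C}{R}Z_s=\id_R(C\otimes_R Z_s)$ by the transfer theorem of Takahashi--White, and then the upper bound is extracted either by a grade argument over a Cohen--Macaulay ring (choose $\p$ with $\Ext^t_R(R/\p,C\otimes_RZ_s)\ne0$ and a regular sequence in $\p$ exhibiting $\p$ as an associated prime of a complete intersection), or in the Tor-vanishing case by showing inductively that all the boundary and cycle modules $B_j$, $Z_j$ of $\Hom_R(C,I_\b)$ are Tor-independent of $C$, so that $C\otimes_R-$ is exact on the truncations and the argument of \cite[Lemma 3.1, Theorem 3.2]{qid} can be run on $C\otimes_R Z_s$. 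Your Cohen--Macaulay plan is likewise incomplete as stated: the induction on $\depth R$ would require a base-change lemma for $C$-quasi-injective resolutions that the paper does not supply, must handle the case $\depth_R M=0<\depth R$ where no simultaneous nonzerodivisor is available, and the Artinian base case is only sketched; the paper's grade/associated-prime argument avoids all three difficulties.
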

\begin{proof}
     Let $I_\b$ be a bounded $C$-quasi-injective resolution of $M$ such that  $\cqid{C}RM = \hinf(\Hom_R(C,I_\b))-\inf(\Hom_R(C,I_\b))$. 
     Set $t=\cqid{C}RM$ and $d=\depth R$. Since $\depth_RC=\depth R$, we can choose a maximal regular sequence $x_1,\dots,x_d$ on both $R$ and $C$. By  \Cref{Vext}, $\Ext^i_R(R/(x_1,\dots,x_d),M)=0$ for all $i>t$ while $\Ext^d_R(R/(x_1,\dots,x_d),M)\cong M/(x_1,\dots,x_d)M \neq 0$ by \cite[Proposition 1.6.10]{BrunsHerzog}. Therefore we must have $d \leq t$.
     
     Next, we show $t\leq d$. If $t=0$, we have nothing to prove. Assume $t>0$ and set $s=\hinf \Hom_R(C,I_\b)$, and $Z_s=\ker (\partial^{\Hom_R(C,I_\b)}_s)$. One has $\cqid{C}RM=\cid{C}{R}Z_s$, and  by \cite[Theorem 2.11]{TakWhite}, $\cid{C}{R}Z_s=\id_RC\otimes_RZ_s$. By \cite[Corollary 3.1.12]{BrunsHerzog}, there exists a prime ideal $\p$ such that $\Ext^t_R(R/\p,C\otimes_RZ_s)\neq 0$. Let $g=\grade_R(\p)$. If $R$ is Cohen-Macaulay, then there exists a regular sequence $a_1,\dots,a_g \in \p$ and $\p \in \Ass R/(a_1,\dots,a_g)$. Thus, the exact sequence $0 \to R/\p \to R/(a_1,\dots,a_g)$ induces an exact sequence $\Ext^t_R(R/(a_1,\dots,a_g),C\otimes_RZ_s)\to \Ext^t_R(R/\p,C\otimes_RZ_s) \to 0$ showing that $\Ext^t_R(R/(a_1,\dots,a_g),C\otimes_RZ_s)\neq 0$. Therefore, one has $t\leq g \leq d$. 

     Next, assume $\Tor^R_{>0}(C,M)=0$. There are exact sequences 
     $$\begin{cases}
    0\to Z_j\to \Hom_R(C,I_j)\to B_j\to0\\
    0\to B_{j+1}\to Z_j\to \H_j(\Hom_R(C,I_\b))\to0.
    \end{cases}$$
     Since $\Tor^R_{>0}(C,M)=0$, one checks by induction that $\Tor^R_{>0}(C,B_j)=\Tor^R_{>0}(C,Z_j)=0$, for all $j$.  Hence, by applying $C\otimes_R-$, we get exact sequences 
     $$\begin{cases}
    0\to C\otimes_RZ_j\to C\otimes_R\Hom_R(C,I_j)\to C\otimes_RB_j\to0\\
    0\to C\otimes_RB_{j+1}\to C\otimes_RZ_j\to C\otimes_R\H_j(\Hom_R(C,I_\b))\to0.
    \end{cases}$$
     We have $C\otimes_R\Hom_R(C,I_j)\cong I_j$ and $\id_RC\otimes_RZ_s=t$. Therefore, the same arguments as in \cite[Lemma 3.1 and Theorem 3.2]{qid} show that $t\leq d$.
\end{proof}

\begin{corollary}
    Let $R$ be local. Let $M$ be a nonzero finitely generated $R$-module such that $\cqid{C}RM < \infty$ and $\Tor^R_{>0}(C,M)=0$. If $\dim_RM=\dim R$, then $R$ is Cohen-Macaulay.
\end{corollary}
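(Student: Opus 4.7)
The plan is to combine the Bass-type formula (\Cref{Bassformula}) with the local cohomology vanishing from \Cref{localcoh} and Grothendieck's non-vanishing theorem. Since $R$ is local (implicit from the fact that $\dim R$ and $\depth R$ are being compared, as in the preceding statements), let $\mathfrak{m}$ denote its maximal ideal.

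First I would invoke \Cref{Bassformula}: the hypotheses $\cqid{C}{R}M < \infty$ and $\Tor^R_{>0}(C,M)=0$ give
\[
\cqid{C}{R}M = \depth R.
\]
Next, I would apply \Cref{localcoh} with $\mathfrak{a} = \mathfrak{m}$ to conclude that
\[
\H^i_{\mathfrak{m}}(M) = 0 \quad \text{for all } i > \depth R.
\]
On the other hand, Grothendieck's non-vanishing theorem for local cohomology yields $\H^{\dim_R M}_{\mathfrak{m}}(M) \neq 0$. Combining these gives $\dim_R M \le \depth R$. Using the hypothesis $\dim_R M = \dim R$ together with the standard inequality $\depth R \le \dim R$, we obtain
\[
\dim R = \dim_R M \le \depth R \le \dim R,
\]
forcing $\depth R = \dim R$, i.e., $R$ is Cohen-Macaulay.

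There is no serious obstacle: the only inputs are results already established in the paper (\Cref{Bassformula} and \Cref{localcoh}) plus Grothendieck's non-vanishing theorem. The one small point worth checking is that $R$ is indeed assumed local in this context, but this is justified by the use of $\depth R$ throughout the section and by the structure of the results from which this corollary is drawn.
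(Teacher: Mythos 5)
Your proposal is correct and is essentially identical to the paper's proof: both invoke \Cref{Bassformula} to get $\cqid{C}{R}M = \depth R$, then combine \Cref{localcoh} with Grothendieck's non-vanishing theorem to conclude $\dim R = \dim_R M \le \depth R$. Your write-up just spells out the intermediate local-cohomology steps that the paper compresses into one sentence.
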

\begin{proof}
    By Theorem \ref{Bassformula}, $\cqid{C}RM =\depth R$. Using Proposition \ref{localcoh} and Grothendieck's Nonvanishing Theorem, we have $\dim R = \dim_RM\leq \depth R$.
    Thus $R$ is Cohen-Macaulay.
\end{proof}

\begin{question}
    Does \Cref{Bassformula} hold without the hypotheses that $R$ is Cohen-Macaulay or $\Tor^R_{>0}(C,M)=0$?
\end{question}

The next result generalizes Ischbeck's formula \cite{ischebeck} to the $C$-quasi-injective dimension case.

\begin{theorem}\label{thm:supExt}
    Let $(R,\m,k)$ be a local ring, and let $M$, $N$ be finitely generated nonzero $R$-modules. Assume $M\in \A{C}(R)$, $\Tor^R_{>0}(C,N)=0$ and $\cqid{C}{R}N<\infty$. If $\Ext^{>>0}_R(M,N)=0$, then one has   $$\sup\{ i| \Ext^i_R(M,N)\neq 0\} =  \depth R - \depth_R M.$$
\end{theorem}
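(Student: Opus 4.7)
The plan is to reduce to Ischebeck's formula for modules of finite quasi-injective dimension, proved in \cite{qpdIschebeck}, via Foxby equivalence. By \Cref{Vext} (applicable since $M \in \A{C}(R)$ gives $\Tor^R_{>0}(C, M) = 0$) combined with \Cref{Bassformula} (applicable since $\Tor^R_{>0}(C, N) = 0$), one obtains the coarse bound $\Ext^i_R(M, N) = 0$ for $i > \cqid{C}{R}N = \depth R$. Let $I_\bullet$ be a bounded $C$-quasi-injective resolution of $N$ realizing $\cqid{C}{R}N$; the spectral sequence of \Cref{lem:ExtSpectralInj} collapses under $\Tor^R_{>0}(C,N)=0$ and yields $\H_q(I_\bullet) \cong (C \otimes_R N)^{\oplus a_q}$, so $I_\bullet$ is simultaneously a bounded quasi-injective resolution of $C \otimes_R N$ and $\qid_R(C \otimes_R N) < \infty$.

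Next, I would establish that $N \in \A{C}(R)$ by running the standard hyperext spectral sequence attached to the bounded complex $I_\bullet$: its $\E_2$-page equals $\Ext^p_R(C, \H_q(I_\bullet)) = \Ext^p_R(C, C \otimes_R N)^{\oplus a_q}$, while the abutment, identified by collapsing the orthogonal spectral sequence using that each $I_q$ is injective, is $\H_\bullet(\Hom_R(C, I_\bullet)) \cong N^{\oplus a_\bullet}$ and is therefore supported only on the bounded range of indices where $a_\bullet \neq 0$. Writing $q_{\max} = \sup\{q : a_q \neq 0\}$ and $p^* = \sup\{p : \Ext^p_R(C, C \otimes_R N) \neq 0\}$, the corner $(p^*, q_{\max})$ admits no incoming differentials (the would-be sources sit in positions with $a_\bullet = 0$); assuming $p^*$ is finite, it admits no outgoing differentials either, so the $\E_\infty$ entry at $(p^*, q_{\max})$ equals $\E_2^{p^*, q_{\max}} \neq 0$ and contributes nontrivially to the abutment at a total degree shifted by $p^*$ from $q_{\max}$. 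Compatibility with the abutment's support forces $p^* \leq 0$, whence $\Ext^{>0}_R(C, C \otimes_R N) = 0$. The spectral sequence then collapses on the row $p = 0$, yielding the natural isomorphism $\Hom_R(C, C \otimes_R N) \cong N$; combined with $\Tor^R_{>0}(C, N) = 0$, this places $N$ in $\A{C}(R)$.

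With $M, N \in \A{C}(R)$, Foxby equivalence (the Ext analog of the isomorphisms in \cite[Lemma 3.1.13]{KeriSD}) yields $\Ext^i_R(M, N) \cong \Ext^i_R(M \otimes_R C, C \otimes_R N)$ for all $i$, transferring $\Ext^{>>0}_R(M, N) = 0$ to $\Ext^{>>0}_R(M \otimes_R C, C \otimes_R N) = 0$. Applying Ischebeck's formula for modules of finite quasi-injective dimension from \cite{qpdIschebeck} to the pair $(M \otimes_R C, C \otimes_R N)$, together with the depth formula $\depth_R(M \otimes_R C) = \depth_R M$ for $M \in \A{C}(R)$, completes the proof. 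The principal technical hurdle is justifying the finiteness of $p^*$; without this, the extreme-entry argument does not rule out the possibility that the sup is infinite, and a more delicate comparison between $I_\bullet$ and an honest injective resolution of $C \otimes_R N$ will probably be needed.
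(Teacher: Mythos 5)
Your opening moves are sound: the upper bound $\Ext^i_R(M,N)=0$ for $i>\depth R$ via \Cref{Vext} and \Cref{Bassformula} agrees with the paper, and your use of \Cref{lem:ExtSpectralInj} to see that a minimal $C$-quasi-injective resolution $I_\bullet$ of $N$ is simultaneously a quasi-injective resolution of $C\otimes_R N$ is correct. The overall strategy of transporting the problem by Foxby equivalence to the pair $(M\otimes_R C, C\otimes_R N)$ and invoking the quasi-injective Ischebeck formula from \cite{qpdIschebeck} would, if it went through, give a substantially shorter argument than the paper's.

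However, the gap you flag at the end is not a technicality you can expect to patch; it is the crux. You need $N\in\A{C}(R)$ to justify the isomorphism $\Ext^i_R(M,N)\cong\Ext^i_R(M\otimes_R C, C\otimes_R N)$, but nothing in the hypotheses forces this, and your corner argument in the hyperext spectral sequence only yields $\Ext^{>0}_R(C,C\otimes_R N)=0$ under the a priori assumption that $p^*=\sup\{p:\Ext^p_R(C,C\otimes_R N)\neq 0\}$ is finite. If $p^*=\infty$ there is no corner: entries $\E_2^{p,q_0}$ for large $p$ can (and must) die by outgoing differentials with targets at smaller $q$-values where $a_\bullet$ may be nonzero, so the spectral sequence gives no contradiction. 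Indeed, the theorem as stated deliberately assumes only $\Tor^R_{>0}(C,N)=0$, not $N\in\A{C}(R)$, and the paper's own proof is engineered to avoid exactly this issue: it never claims $N\in\A{C}(R)$. Instead, it replaces $N$ by the cycle module $Z_{-s}=\ker\partial_{-s}$ at the tail of $\Hom_R(C,I_\bullet)$. This $Z_{-s}$ has finite honest $C$-injective dimension, so \cite[Corollary 2.9(b)]{TakWhite} puts it in $\A{C}(R)$, $C\otimes_R Z_{-s}$ has finite injective dimension, and the Foxby transfer $\Ext^t_R(C\otimes_R M,C\otimes_R Z_{-s})\cong\Ext^t_R(M,Z_{-s})$ is legitimate. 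The paper then closes the base case $\depth_R M=0$ by a Nakayama-style contradiction and handles $\depth_R M>0$ by cutting down by a regular element. So the two proofs really do diverge: the paper's inductive argument localizes all the Auslander-class requirements on a submodule that genuinely satisfies them, whereas your reduction needs a global statement about $N$ that does not follow from the hypotheses. To salvage your approach you would either have to prove $N\in\A{C}(R)$ (which would in effect strengthen the theorem's hypotheses and seems unlikely to hold in general), or rework the Foxby transfer so it passes through $Z_{-s}$ as the paper does.
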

\begin{proof} 
   Set $t=\depth R - \depth_RM$, and assume $\cqid{C}{R}N<\infty$. We proceed by induction on $\depth_RM$. If $\depth_RM=0$, then $t=\depth R$ and by \Cref{Vext} and \Cref{Bassformula}, we have $\Ext^i_R(M,N)=0$ for all $i>t=\cqid{C}{R}N$. We show $\Ext^t_R(M,N)\neq 0$. We assume to contrary that $\Ext^t_R(M,N)=0$ and seek a contradiction. Let $I_\b$ be a $C$-quai injective resolution of $N$ such that $\cqid{C}{R}N=\hinf(\Hom_R(C,I_\b))-\inf(\Hom_R(C,I_\b))=t$. Set $X_\b=\Hom_R(C,I_\b)$ and $\hinf(X_\b)=-s$. Then, there are exact sequences
    $$\begin{cases}
    0\to Z_j\to X_j\to B_j\to0\\
    0\to B_{j+1}\to Z_j\to \H_j(X_\b)\to0.
    \end{cases}$$
    By using the exact sequences above and induction, one checks $\Ext^i_R(M,B_j)=\Ext^i_R(M,Z_j)=0$ for all $i \geq t$ and all $j\in \mathbb{Z}$. Since by \cite[Theorem 2.11]{TakWhite}, $\id_RC\otimes_RZ_{-s}=\cid{C}{R}Z_{-s}=t$, we have $\Ext^t_R(k,C\otimes_RZ_{-s})\neq 0$; see the proof of \cite[Theorem 3.2]{qid}. Since $M\in \A{C}(R)$, we have $M\cong \Hom_R(C,C\otimes_RM)$ and therefore $\depth_R C\otimes_RM=0$ (by \cite[Lemma 3.9]{AiTak}). There is an exact sequence $0\to k\to C\otimes_RM \to L\to 0$ of $R$-modules. Then by applying $\Hom_R(-,C\otimes_RZ_{-s})$, one gets an exact sequence $\Ext^t_R(C\otimes_RM,C\otimes_RZ_{-s})\to \Ext^t_R(k,C\otimes_RZ_{-s})\to 0$ which shows that $\Ext^t_R(C\otimes_RM,C\otimes_RZ_{-s})\neq 0$. Since $\cid{C}{R}Z_{-s}<\infty$, by \cite[Corollary 2.9(b)]{TakWhite} it follows that $Z_{-s}\in \A{C}(R)$, therefore by \cite[Lemma 3.1.13(a)]{KeriSD}, we deduce that $\Ext^t_R(C\otimes_RM,C\otimes_RZ_{-s})\cong \Ext^t_R(M,Z_{-s})=0$, contradiction \\ 
    Now assume $\depth_RM>0$ and let $x\in \m$ be a nonzero-divisor on $M$. By using the exact sequence $0\to M \overset{x}\to M \to M/xM \to 0$ and \cite[Proposition 3.1.7]{KeriSD}, we have $M/xM \in \A{C}(R)$. There exists a long exact sequence $\dots \to \Ext^i_R(M,N) \overset{x}\to \Ext^i_R(M,N) \to \Ext^i_R(M/xM,N) \to \Ext^{i+1}_R(M,N) \to \cdots$. Hence, Nakayama's Lemma and an induction argument settles the assertion.
\end{proof}

The next result generalizes \cite[Corollary 6.21]{qpd} and recovers \cite[Corollary 4.3]{qid} for rings with a dualizing complex.

\begin{theorem}\label{0.8}
    Let $R$ be a local ring with a dualizing complex $D_R$. Let $C$ be a semidualizing module, and let $M$ be a finitely generated nonzero $R$-module. Assume one of the following conditions hold:
    \begin{enumerate}
        \item $\cqpd{C}{R}M$ and $\id_RM$ both are finite.
        \item $\cqid{C}{R}M$ and $\pd_RM$ both are finite.
    \end{enumerate}
    Then $R$ is Cohen-Macaulay with canonical module $\omega_R \cong C$.
\end{theorem}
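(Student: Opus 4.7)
The plan is to prove, under each set of hypotheses, that $C$ agrees up to shift with the dualizing complex $D_R$. Because $C$ is a module, this forces $D_R$ to be a shift of a module, which is equivalent to $R$ being Cohen--Macaulay with $\omega_R\cong C$.

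For (1), I would begin by invoking \Cref{cor:VirtSmall} to obtain a nonzero $X\in\thick_R C\cap\thick_R M$. The hypothesis $\id_R M<\infty$ is inherited by all of $\thick_R M$, so $\id_R X<\infty$. Since $\RHom_R(C,C)\simeq R$, the functor $\RHom_R(C,-)$ sends $\thick_R C$ into $\thick_R R$, so $Q\colonequals\RHom_R(C,X)$ is a perfect complex; the counit $C\lotimes_R\RHom_R(C,-)\to\mathrm{id}$ is an isomorphism on $\thick_R C$, giving $X\simeq C\lotimes_R Q$ with $Q\neq 0$. Using the projection formula for the perfect $Q$ together with the fact that the $R$-action on $\RHom_R(\kk,C)$ factors through $\kk$, I would obtain
\[
\RHom_R(\kk,X)\simeq\RHom_R(\kk,C)\lotimes_R Q\simeq\RHom_R(\kk,C)\otimes_\kk(Q\lotimes_R\kk).
\]
The left side is bounded because $\id_R X<\infty$, and $Q\lotimes_R\kk$ is a bounded nonzero complex of $\kk$-vector spaces by Nakayama for perfect complexes over local rings. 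Since tensor over a field is exact, $\RHom_R(\kk,C)$ is itself bounded, i.e.\ $\id_R C<\infty$. A semidualizing module of finite injective dimension over a local ring is the canonical module, so $R$ is Cohen--Macaulay and $C\cong\omega_R$.

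For (2), I would first manufacture the analogue of virtual smallness. Let $I_\b$ be a bounded complex of injectives with $\Hom_R(C,I_\b)$ having homologies $\cong M^{\oplus a_i}$. Since every injective module lies in $\B{C}(R)$, the evaluation map yields $I_\b\cong C\otimes_R\Hom_R(C,I_\b)$ term-wise. Because each $\Hom_R(C,I_i)\in\A{C}(R)$ and $M\in\A{C}(R)$ (the latter since $\pd_R M<\infty$), the natural spectral sequence collapses to $\H_i(I_\b)\cong(C\otimes_R M)^{\oplus a_i}$, which is finitely generated. Thus $I_\b\in\D^f_b(R)$ has finite injective dimension, placing it in $\thick_R D_R$. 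Applying $\RHom_R(C,-)$ we get $Y\colonequals\Hom_R(C,I_\b)\in\thick_R C^\dagger$, where $C^\dagger\colonequals\RHom_R(C,D_R)$, and by \cite[3.10]{DGI} also $Y\in\thick_R M$. Hence $0\neq Y\in\thick_R C^\dagger\cap\thick_R M$, and $\pd_R M<\infty$ forces $Y$ perfect. Mimicking (1) with $C$ replaced by $C^\dagger$ and $\RHom_R(\kk,-)$ replaced by $-\lotimes_R\kk$, I would write $Y\simeq C^\dagger\lotimes_R Q'$ with $Q'$ perfect nonzero, and deduce from the boundedness of $Y\lotimes_R\kk$ that $C^\dagger\lotimes_R\kk$ is bounded, so $C^\dagger$ is perfect. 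A perfect semidualizing complex over a local ring is isomorphic to a shift of $R$, hence $C\simeq C^{\dagger\dagger}\simeq D_R$ up to shift, and the conclusion follows as in (1).

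The central technical point in both cases is the cancellation step: the bounded nonzero $\kk$-complex $Q\lotimes_R\kk$ (respectively $Q'\lotimes_R\kk$) cannot annihilate an unbounded $\kk$-complex under tensor over $\kk$, so boundedness of the tensor product forces boundedness of the other factor. Once the projection formula is correctly deployed and one verifies the finite-generation of $\H_\b(I_\b)$ in case (2), everything else reduces to standard manipulations with semidualizing modules and thick subcategories.
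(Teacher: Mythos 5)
Your argument is correct, and it takes a genuinely different route from the paper's. For part (1) the paper works very concretely: it first cites the resolution of Bass' conjecture to get $R$ Cohen--Macaulay, produces from \Cref{minimal} a perfect $P_\b$ with $\H_\b(P_\b\otimes_R C)$ built from $M$, observes $\id_R(P_\b\otimes_R C)<\infty$, dualizes into $\omega_R$ to conclude $\pd_R\Hom_R(C,\omega_R)<\infty$, and then uses maximal Cohen--Macaulayness to see $\Hom_R(C,\omega_R)$ is free and $\omega_R\cong C$. You instead invoke \Cref{cor:VirtSmall} to obtain $0\ne X\in\thick_R C\cap\thick_R M$, factor $X\simeq C\lotimes_R Q$ with $Q=\RHom_R(C,X)$ perfect nonzero, and run a K\"unneth cancellation over $\kk$ to force $\RHom_R(\kk,C)$ bounded, i.e.\ $\id_R C<\infty$; this is slicker and in fact does not need the dualizing complex for part (1) at all. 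For part (2) the difference is even more pronounced: the paper performs a fairly intricate chain of $\RHom$--$\lotimes$ swaps through $D_R$ to land on $\pd_R\Hom_R(C,D_R)<\infty$, whereas you manufacture a dual form of virtual smallness, showing $Y=\Hom_R(C,I_\b)$ is a nonzero perfect object of $\thick_R C^\dagger\cap\thick_R M$ with $C^\dagger=\RHom_R(C,D_R)$, then reuse the cancellation template to get $C^\dagger$ perfect and hence $D_R$ a shift of the module $C$. Your version is more uniform across the two cases, foregrounding the thick-subcategory and derived-Morita mechanism that is latent in the paper's computations; the paper's proofs are more elementary in the sense that they avoid the counit isomorphism on $\thick_R C$ and the K\"unneth step, at the cost of two rather different-looking arguments. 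A small point worth making explicit in a write-up: the perfect-complex factorization $X\simeq C\lotimes_R\RHom_R(C,X)$ (and its $C^\dagger$-analogue) relies on the counit of the $(C\lotimes_R-,\RHom_R(C,-))$ adjunction being an isomorphism on $C$ and hence, by thickness of the full subcategory where a natural transformation of triangulated functors is invertible, on all of $\thick_R C$; and the nonvanishing of $Q\lotimes_R\kk$ (resp.\ $Q'\lotimes_R\kk$) is Nakayama for nonzero perfect complexes. Both are standard, but they are load-bearing and should be stated.
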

\begin{proof}
    Assume (1) holds. Since $M$ is a finitely generated module of finite injective dimension, it follows that $R$ is Cohen-Macaulay and hence $D_R\simeq \omega_R$. By \Cref{minimal} there exists a  perfect complex  $P_\b$ such that $\H_i(P_\b\otimes_RC)\cong M^{\oplus{a_i}}$ for some $a_i$. Since $\id_RM<\infty$, it follows that $\id_R(P_\b\otimes_R C)<\infty$ by \cite[Lemma 4.1]{GheibiZargar}. Thus, one has $\pd_R\RHom_R(P_\b\otimes_R C,\omega_R)<\infty$ by \cite[Remark 4.1]{IK}. By using \cite[Theorem A.7.8]{gbook} and the isomorphism $\RHom_R(P_\b\otimes_R C,\omega_R) \cong \RHom_R(P_\b,\RHom_R(C,\omega_R))$ , we have $\pd_R\RHom_R(C,\omega_R)<\infty$. Since $\RHom_R(C,\omega_R)\cong \Hom_R(C,\omega_R)$ is MCM by \cite[Lemma 3.9]{AiTak}, it follows that $\Hom_R(C,\omega_R)$ is free. Since modules of finite injetive dimensions belong to $\B{C}(R)$ it follows that $C\otimes_R\Hom_R(C,\omega_R)\cong\omega_R$. Therefore $\omega_R\cong C^n$, but since $\omega_R$ is indecomposable we deduce that $\omega_R\cong C$.

    Assume (2) holds. There exists a bounded complex of injective modules $I_\b$ such that $\H_i(\Hom_R(C,I_\b)) \cong M^{\oplus^{a_i}}$ for some $a_i$. Since $\pd_RM<\infty$, we have $\pd_R\Hom_R(C,I_\b)<\infty$ by \Cref{lem:thick}. It follows that $\id_R\RHom(\RHom_R(C,I_\b),D_R)<\infty$ by \cite[Remark 4.1]{IK}. By \cite[A.4.24]{gbook}, there is an isomorphism $\RHom_R(\RHom_R(C,I_\b),D_R) \cong C\lotimes_R\RHom_R(I_\b,D_R)$. Therefore, one has $\id_R C\lotimes_R\RHom_R(I_\b,D_R)<\infty$. Next we show that $\Hom_R(I_\b,D_R) \in \D^f_b(R)$. Indeed, since $\RHom_R(C,I_\b)=\Hom_R(C,I_\b) \in \D^f_b(R)$, we have $\RHom(\RHom_R(C,I_\b),D_R) \in \D^f_b(R)$ by \cite[Lemma A.4.4]{gbook}, and hence, $C\lotimes_R\RHom_R(I_\b,D_R)\in \D^f_b(R)$. Therefore $\RHom_R(C,C\lotimes_R\RHom_R(I_\b,D_R))\in \D^f(R)$ by a further application of \cite[Lemma A.4.4]{gbook}. Since $\RHom_R(I_\b,D_R)$ is a bounded complex of flat modules, it follows from \cite[A.4.23]{gbook} that
    \[
\RHom_R(C,C\lotimes_R\RHom_R(I_\b,D_R))\cong\RHom_R(C,C)\lotimes_R\RHom_R(I_\b,D_R)\cong\RHom_R(I_\b,D_R)\in\D^f_b(R).
    \]
    Since $\pd_R\RHom_R(I_\b,D_R)<\infty$ by \cite[Remark 4.1]{IK}, there exists a perfect complex $F_\b$ and a quasi isomorphism $F_\b \overset{\simeq} \longrightarrow \RHom_R(I_\b,D_R)$. Since we already proved that $\id_R C\lotimes_R\RHom_R(I_\b,D_R)<\infty$, it follows that $\id_R C\otimes_R F_\b <\infty$. By \cite[Remark 4.1]{IK} $\pd_R\Hom_R(C\otimes_R F_\b, D_R)=\pd_R\Hom_R(F_\b,\Hom_R(C,D_R))<\infty$. Since both $F_\b$ and $\Hom_R(C,D_R)$ are in $\D^f_b(R)$, we get $\pd_R\Hom_R(C,D_R)<\infty$ by \cite[Theorem A.7.8]{gbook}. Finally, by \cite[Theorem A.8.4]{gbook} the following isomorphism $\RHom_R(\RHom_R(C,D_R),D_R)\cong C$ holds in the derived category of $R$, showing that $\id_RC<\infty$. Thus $R$ is Cohen-Macaulay with a canonical module $\omega_R=C$.
\end{proof}

The next result is an dual version of \Cref{thm:ARconj}.

\begin{theorem}
Let $M$ and $C$ be $R$-modules where $C$ is semidualizing. If $\cqid{C}{R}M<\infty$ and $\Ext^{\geq2}_R(M,M)=0$, then $\cid{C}{R}M<\infty$. Moreover, if $M$ is finitely generated and $R$ is local, then $\cid{C}{R}M=\depth R$.
\end{theorem}

\begin{proof}
There is a bounded complex $I_\b$ of injective $R$-modules such that $\Hom_R(C,I_\b)$ is not acyclic and its homologies are finite direct sum of copies of $M$. Therefore
\[
\Ext_R^{\geq2}(\H_i(\Hom_R(C,I_\b)),\H_j(\Hom_R(C,I_\b)))=0
\]
for all $i,j\in\mathbb{Z}$. By \cite[Tag 0GM4]{stacks-project} we get $\Hom_R(C,I_\b)=\bigoplus_{i\in\mathbb{Z}}\Sigma^iM^{a_i}$ for some non-negative integers $a_i$, at least one of which is non-zero. Applying $-\lotimes_RC$ to both sides and using \cite[A.4.23]{gbook} we get $I_\b=\bigoplus_{i\in\mathbb{Z}}\Sigma^i(M\lotimes_RC)^{a_i}$. Thus $M\lotimes_RC$ has finite injective dimension and therefore $\cid{C}{R}M<\infty$ by \cite[Definition 3.1(1)]{totushek}. The last assertion follows from \cite[Theorem 2.11(b)]{TakWhite}.
\end{proof}

\section{Declarations}
\textbf{Conflict of Interest Statement.} The authors of this manuscript declare they have no conflict of interest.

\bibliographystyle{amsplain}
\bibliography{biblio}
\end{document}